\documentclass{article}
\usepackage[utf8]{inputenc}

\usepackage[T1]{fontenc}
\usepackage{amssymb,amsthm,amsmath,mathtools,amsrefs,MnSymbol}
\usepackage{amscd,amstext,units,amsfonts,amsbsy}
\usepackage[utf8]{inputenc}
\usepackage[colorlinks=true]{hyperref}
\usepackage{color,xcolor}
\usepackage{url}
\usepackage{comment}
\usepackage{tikz-cd}

\usepackage[toc,page]{appendix}
	\addtolength{\oddsidemargin}{-.75in}
	\addtolength{\evensidemargin}{-.75in}
	\addtolength{\textwidth}{1.75in}

	\addtolength{\topmargin}{-.875in}
	\addtolength{\textheight}{1.75in}

\usepackage{tikz-cd}
\usepackage{graphicx}
\graphicspath{ {images/} }

\bibliographystyle{alpha}
%
%
%



\newcommand{\bp}{\begin{prop}}
\newcommand{\ep}{\end{prop}}
\newcommand{\bd}{\begin{definicion}}
\newcommand{\ed}{\end{definicion}}
\newcommand{\bl}{\begin{lema}}
\newcommand{\el}{\end{lema}}
\newcommand{\bh}{\begin{hecho}}
\newcommand{\eh}{\end{hecho}}
\newcommand{\bpreg}{\begin{preg}}
\newcommand{\epreg}{\end{preg}}
\newcommand{\bo}{\begin{obs}}
\newcommand{\eo}{\end{obs}}
\newcommand{\bcon}{\begin{conj}}
\newcommand{\econ}{\end{conj}}
\newcommand{\brmk}{\begin{rmk}}
\newcommand{\ermk}{\end{rmk}}
\newcommand{\bc}{\begin{corol}}
\newcommand{\ec}{\end{corol}}
\newcommand{\bconst}{\begin{const}}
\newcommand{\econst}{\end{const}}

\newcommand{\bitem}{\begin{itemize}}
\newcommand{\eitem}{\end{itemize}}
\newcommand{\bt}{\begin{teor}}
\newcommand{\et}{\end{teor}}
\newcommand{\be}{\begin{ejem}}
\newcommand{\ee}{\end{ejem}}
\newcommand{\bnot}{\begin{nota}}
\newcommand{\enot}{\end{nota}}
\newcommand{\trdeg}{\operatorname{trdeg}}
\newcommand{\cl}{\operatorname{cl}}
\newcommand{\Vect}{\operatorname{Vect}}
\newcommand{\bdn}{\operatorname{bdn}}

\newtheorem*{conj}{Conjecture}
\newtheorem{claim}{Claim}

\newtheorem{theorem}{Theorem}

\numberwithin{theorem}{section}
\newtheorem{corollary}[theorem]{Corollary}
\newtheorem{definition}[theorem]{Definition}
\newtheorem{lemma}[theorem]{Lemma}
\newtheorem{notation}[theorem]{Notation}

\newtheorem{proposition}[theorem]{Proposition}

\newtheorem{fact}[theorem]{Fact}
\newtheorem{remark}[theorem]{Remark}
\usepackage{fancyvrb }
\newcommand{\forkindep}[1][]{%
  \mathrel{
    \mathop{
      \vcenter{
        \hbox{\oalign{\noalign{\kern-.3ex}\hfil$\vert$\hfil\cr
              \noalign{\kern-.7ex}
              $\smile$\cr\noalign{\kern-.3ex}}}
      }
    }\displaylimits_{#1}
  }
}

\numberwithin{equation}{section}

\def\Ind#1#2{#1\setbox0=\hbox{$#1x$}\kern\wd0\hbox to 0pt{\hss$#1\mid$\hss}
\lower.9\ht0\hbox to 0pt{\hss$#1\smile$\hss}\kern\wd0}

\def\ind{\mathop{\mathpalette\Ind{}}}

\def\notind#1#2{#1\setbox0=\hbox{$#1x$}\kern\wd0
\hbox to 0pt{\mathchardef\nn=12854\hss$#1\nn$\kern1.4\wd0\hss}
\hbox to 0pt{\hss$#1\mid$\hss}\lower.9\ht0 \hbox to 0pt{\hss$#1\smile$\hss}\kern\wd0}

\newcommand{\tp}{\operatorname{tp}}
\newcommand{\dcl}{\operatorname{dcl}}
\newcommand{\acl}{\operatorname{acl}}

\newcommand{\val}{\operatorname{val}}
\newcommand{\res}{\operatorname{res}}

\newcommand{\ac}{\operatorname{ac}}
\newcommand{\rv}{\operatorname{rv}}
\newcommand{\kInt}{\operatorname{kInt}}
\newcommand{\pp}{\operatorname{p.p.-type}}
\newcommand{\id}{\operatorname{id}}

\title{Residue field domination in henselian valued fields of equicharacteristic zero }
\author{Mariana Vicar\'ia}
\begin{document}
    \maketitle
\begin{abstract}
In this paper we investigate domination results in an Ax-Kochen/ Ershov style for henselian valued fields of equicharacteristic zero for elements in the home sort.
\end{abstract}
\newpage
  \tableofcontents 
\newpage
\section{Introduction}
The model theory of henselian valued fields has been a major topic of study during the last century. It was initiated by Robinson's model completeness results of algebraically closed valued fields  in \cite{robinson}.  Remarkable work has been achieved by Haskell, Hrushovski and Macpherson to understand the model theory of algebraically closed valued fields. In a sequence of papers \cite{HHM} and  \cite{HHM2}, they developed the notion of stable domination, that essentially established how an algebraically closed valued field is governed by its stable part. \\

In further work  Ealy, Haskell and Ma\v{r}\'{i}cov\'{a}  present in  \cite{RCVF} , introduced an abstract form the notion of domination present in \cite{HHM}, 
Let $T$ be a complete first order theory and let  $S$ and $\Gamma$ be stably embedded sorts, and $C \subseteq A,B$ be sets of parameters in the monster model $\mathfrak{C}$. 
\begin{definition}
\begin{enumerate}
    \item  the type $\tp(A/C)$ is said to be \emph{dominated by the sort $S$}, if whenever $S(B)$ is \emph{independent} from $S(A)$ over $S(C)$ then that $ \tp(A/ C S(B)) \vdash \tp(A/ CB)$.
    
    \item the type $\tp(A/ C)$ is said to be dominated by the sort $S$ over $\Gamma$ if the type $\tp(A/ C \Gamma(A))$ is dominated by the sort $S$. 
\end{enumerate}
 \end{definition}
In  \cite{RCVF} domination results for the setting of real closed convexly valued fields are proved, which suggests that the presence of a stable part of the structure is not fundamental to achieve domination results and indicates that the right notion should be residue field domination or domination by the internal sorts to the residue field in broader classes of henselian valued fields. Later in \cite{Haskell}, Haskell, Ealy and Simon generalized the residue field domination results for the theory of henselian valued fields of equicharacteristic zero with bounded Galois group. In their work, it becomes clear that the key ingredients to obtain domination results are the existence of separated basis and a relative quantifier elimination statement. \\

Our main motivation arises from the natural question of how much further a notion of residue field domination could be extended to broader classes of valued fields to gain a deeper model theoretic insight of henselian valued field.\\
Stable domination has played a fundamental role in understanding the model theory of algebraically closed valued fields, and more precisely it has served as a bridge to lift ideas from stability theory to the setting of valued fields. For example, Hrushovski and Rideau-Kikuchi in \cite{SH}, have shown that for any definable abelian group $A$ in a model of  $ACVF$ we can find a definable group $\Lambda \subseteq \Gamma^{n}$, where $\Gamma$ is the value group, and a definable homomorphism $\lambda: A \rightarrow \Lambda$, such that $H:=ker(\lambda)$ is limit stably dominated [see \cite[Definition 5.6]{SH}].\\

 In this paper we investigate domination results for henselian valued fields of equicharacteristic zero. The general strategy to show domination of a type $\tp(A/C)$ by a sort $S$ is taking a partial elementary map $\sigma$ witnessing $\tp(A/ C S(B))= \tp(A'/ CS(B))$ and finding an automorphism $\hat{\sigma}$ of the monster model that extends $\sigma$ and fixes $CB$. For each of the statements, we specify precisely which notion of \emph{independence} is required to extend the isomorphism, in fact not the entire power of forking independence is needed. \\
 
It is still an open question to find a reasonable language in which a henselian valued field of equicharacteristic zero eliminates imaginaries. Some positive results have been obtained in certain classes of henselian valued fields of equicharacteristic zero, see for example \cite{RH} and \cite{multivaluedfields}.\\
Therefore, we start by studying domination results for henselian valued fields of equicharacteristic zero for elements in the home-sort. A valued field $(K,\mathcal{O})$ where $\mathcal{O}$ is its valuation ring and $\mathcal{M}$ the maximal ideal, can be considered in several different languages. For instance, a valued field can be seen as a three sorted structure in the language $\mathcal{L}_{\val}$ [see Definition \ref{1}],
 where the first two sorts are equipped with the language of fields while the third one is provided with the language of ordered abelian groups $\mathcal{L}_{OAG}=\{ 0, <, +, -\}$ extended by a constant $\infty$. We interpret the first sort as the main field, the second one as the residue field and the third one as the value group $\Gamma$ and $\infty$, where $\gamma< \infty$ for all $\gamma \in \Gamma$ and $\gamma+\infty= \infty+\gamma=\infty$ for all 
 $\gamma \in \Gamma \cup \{ \infty \}$.\\
A natural extension of $\mathcal{L}_{\val}$ is the language where an angular component map is added, we denote this extension by  $\mathcal{L}_{\ac}$ [see Definition \ref{2}]. \\
 
 In \cite{distal}, Aschenbrenner, Chernikov, Gehret and Ziegler introduced a multi-sorted language $\mathcal{L}$ extending $\mathcal{L}_{\val}$ in which one obtains elimination of field quantifiers for any henselian valued field of equicharacteristic zero.
 In this extension,  we expand the structure $(\mathbf{k}, \Gamma \cup \{\infty \})$ by adding a new sort $\mathbf{k}/(\mathbf{k}^{\times})^{n}$ for every $n \geq 2$ together with the natural surjections $\pi_{n}: \mathbf{k} \rightarrow \mathbf{k}/ (\mathbf{k}^{\times})^{n}$. A precise description of the language $\mathcal{L}$ is given in Definition \ref{3}. The multi-sorted structure $(\mathbf{k}/(\mathbf{k}^{\times})^{n} \ | \ n \in \mathbb{N})$ will play a fundamental role in the domination results, so we refer to them as the \emph{power residue sorts}. We identify $\mathbf{k}/\mathbf{k}^{0}$ with the residue field $\mathbf{k}$. \\

This paper is organized as follows:
 \begin{enumerate}
\item Section \ref{preliminaries}: we briefly summarize the relative quantifier elimination already known for henselian valued fields in equicharacteristic zero. 
 \item Section \ref{dominationvalres}: we prove that  over a maximal model, a valued field is dominated  by the value group and the power residue sorts in the language $\mathcal{L}$ introduced by Aschenbrenner, Chernikov, Gehret and Ziegler. We show as well that over a maximal field, a valued field is dominated by the residue field and the value group sort in the $\mathcal{L}_{\ac}$-language.  
  \item Section \ref{fork}: we prove that over a maximal model forking is determined by Shelah's imaginary expansion of the power residue sorts (residue sort) and Shelah's imaginary expansion of the value group in the language $\mathcal{L}$ (or $\mathcal{L}_{\ac}$). We assume the theory of the residue field to be $NTP_{2}$. 
 \item Section \ref{domint}: we show that over a maximal model a valued field is dominated by the sorts internal to the residue field over the value group in the language $\mathcal{L}$. 

 \end{enumerate}

  We follow ideas present in \cite{HHM}, and some results are present in their proofs but not in an easy quotable form. We include the statements and their proofs for sake of completeness.\\

\textbf{Aknowledgements:} This document was written as part of the author's  PhD thesis under  Pierre Simon and Thomas Scanlon. The author would like to express her gratitude to both of them for many insightful conversations. The author would like as well to thank Clifton Ealy, for answering some questions in a kind and organized way when the author was learning many of the contents related to this work. 
\newpage
\tableofcontents
\section{Preliminaries}\label{preliminaries}
\subsection{Finer valuations, places and separated bases}\label{refi}
To gain our final statement of domination of the internal sorts to the residue field, we will need to construct a refined valuation induced by the composition of some places. In this subsection, we recall some basics facts about refinements and places. We refer the reader interested in further details to \cite[Section 2.3]{EP}. \\
\begin{definition} Let $\mathcal{O}$ be a valuation ring of $K$ and $\mathcal{O}'$ be an overring of $\mathcal{O}$, and hence a valuation ring of $K$.  Then, we say that $\mathcal{O}$ is a \emph{coarsening} of $\mathcal{O}'$ and $\mathcal{O}'$ is a \emph{refinement} of $\mathcal{O}$. 
\end{definition}
Let $\mathcal{O}$ be a fixed valuation ring of $K$ and $\mathcal{O}'$ be an overring of $\mathcal{O}$. We have $\mathcal{M}'\subseteq \mathcal{M}$, where $\mathcal{M}'$ and $\mathcal{M}$ denote the maximal ideals of $\mathcal{O}'$ and $\mathcal{O}$ respectively. Since $\mathcal{M}'$ is a prime ideal in $\mathcal{O}'$, then it is also a prime ideal of $\mathcal{O}$. Moreover, localizing $\mathcal{O}$ at $\mathcal{M}'$ we can recover $\mathcal{O}'$, in fact $\mathcal{O}'= \mathcal{O}_{\mathcal{M'}}$. \\

The following is \cite[Lemma 2.3.1]{EP}.
\begin{lemma}\label{covexprime} Let $\mathcal{O}$ be a non trivial valuation ring in $K$ corresponding to the valuation $v:K \twoheadrightarrow \Gamma \cup \{ \infty\}$. Then there is a $1$-to-$1$ correspondence of the convex subgroups $\Delta$ of $\Gamma$ with the prime ideals $p$ of $\mathcal{O}$, and hence with the overrings $\mathcal{O}_{p}$. This correspondence is given by:
\begin{align*}
\Delta &\rightarrow p_{\Delta}=\{ x \in K \ | \ v(x) > \delta \ \text{for all} \ \delta \in \Delta \}\\
p &\rightarrow \Delta_{p}=\{ \gamma \in \Gamma \ | \ \gamma< v(x) \ \text{and} -\gamma < v(x) \ \text{for all} \ x\in p\}. 
\end{align*}
\end{lemma}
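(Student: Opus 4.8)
The plan is to verify directly that the two assignments are well-defined and are mutually inverse and inclusion-reversing; the passage to overrings is then the standard localization fact already recalled above (every overring of $\mathcal{O}$ is of the form $\mathcal{O}_{p}$ for a prime ideal $p$, namely $p = \mathcal{M}'$). Throughout I write $|\gamma| = \max\{\gamma, -\gamma\}$ and use that $v$ surjects onto $\Gamma \cup \{\infty\}$, so every element of $\Gamma_{\geq 0}$ is realized as $v(z)$ for some $z \in \mathcal{O}$, and that a nonunit has positive value. I will also treat the degenerate primes $p = (0)$ (where $\Delta_p = \Gamma$) and $p = \mathcal{M}$ (where $\Delta_p = \{0\}$) by inspection.

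First I would check that $\Delta \mapsto p_{\Delta}$ lands among prime ideals. Since $0 \in \Delta$ we get $p_{\Delta} \subseteq \mathcal{M}$; the ultrametric inequality and the defining condition show $p_{\Delta}$ is closed under addition and under multiplication by $\mathcal{O}$, so it is an ideal. For primality, if $x, y \in \mathcal{O}\setminus p_{\Delta}$ there are $\delta_{1}, \delta_{2} \in \Delta$ with $0 \leq v(x) \leq \delta_{1}$ and $0 \leq v(y) \leq \delta_{2}$, hence $v(xy) \leq \delta_{1}+\delta_{2} \in \Delta$ and $xy \notin p_{\Delta}$. Next I would check that $p \mapsto \Delta_{p}$ lands among convex subgroups. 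Convexity and symmetry are immediate from the definition, and $0 \in \Delta_{p}$ since any nonzero element of a proper prime ideal is a nonunit. The one substantial point is closure under addition, and this is where primality of $p$ is used, which I expect to be the crux of the argument: given $\gamma_{1}, \gamma_{2} \in \Delta_{p}$ and $x \in p$, choose $z \in \mathcal{O}$ with $v(z) = |\gamma_{1}|$; from $x = z\cdot(x/z)$ and primality, $x/z \in p$ (it cannot be that $z \in p$, since $v(z) = |\gamma_{1}| < v(w)$ for every $w \in p$), and then, using $v(x/z) > |\gamma_{2}|$ because $x/z \in p$, we get $v(x) = |\gamma_{1}| + v(x/z) > |\gamma_{1}| + |\gamma_{2}| \geq |\gamma_{1}+\gamma_{2}|$; as $x \in p$ was arbitrary, $\gamma_{1}+\gamma_{2} \in \Delta_{p}$.

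It remains to show the two maps are inverse. For $\Delta_{p_{\Delta}} = \Delta$: the inclusion $\Delta \subseteq \Delta_{p_{\Delta}}$ is immediate because $|\gamma| \in \Delta$ whenever $\gamma \in \Delta$, so $v(x) > |\gamma|$ for all $x \in p_{\Delta}$; conversely, if $\gamma \notin \Delta$ then convexity of $\Delta$ forces $|\gamma| > \delta$ for all $\delta \in \Delta$, so picking $x$ with $v(x) = |\gamma| > 0$ gives $x \in p_{\Delta}$ and witnesses $\gamma \notin \Delta_{p_{\Delta}}$. For $p_{\Delta_{p}} = p$: the inclusion $p \subseteq p_{\Delta_{p}}$ is immediate from the definitions; conversely, if $x \in p_{\Delta_{p}}\setminus p$, then for every $y \in p$ we must have $v(x) < v(y)$ — otherwise $x/y \in \mathcal{O}$ and $x = y\cdot(x/y) \in p$ — and since also $v(x) > 0$ this yields $v(x) \in \Delta_{p}$, contradicting $x \in p_{\Delta_{p}}$.

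Finally I would observe that both assignments reverse inclusions (more conditions define a smaller set), so they restrict to mutually inverse order-reversing bijections between the lattice of convex subgroups of $\Gamma$ and the lattice of prime ideals of $\mathcal{O}$; composing $\Delta \mapsto p_{\Delta}$ with $p \mapsto \mathcal{O}_{p}$ then gives the stated correspondence with the overrings of $\mathcal{O}$. The main obstacle, as indicated, is the closure of $\Delta_{p}$ under addition: it is the only place where one must genuinely exploit that $p$ is prime rather than merely an ideal, via the factorization-and-splitting trick above; everything else is a routine unwinding of the definitions together with surjectivity of $v$.
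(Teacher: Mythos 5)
Your argument is correct. The paper does not actually prove this lemma — it simply quotes it as Lemma 2.3.1 of Engler--Prestel — so there is no in-text proof to compare against; your direct verification is the standard one from that reference. All the steps check out: the only genuinely non-routine point, closure of $\Delta_{p}$ under addition, is exactly where primality must enter, and your factorization $x = z\cdot(x/z)$ with $v(z)=|\gamma_{1}|$ handles it correctly (note $x/z\in\mathcal{O}$ because $|\gamma_1|<v(x)$, and $z\notin p$ because $v(z)<v(w)$ for all $w\in p$), while the two inverse-map computations and the passage to overrings via $\mathcal{O}'=\mathcal{O}_{\mathcal{M}'}$ are the routine unwindings you describe.
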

 Let $\mathcal{O}$ be a valuation ring of $K$ and $v:K \rightarrow \Gamma \cup \{ \infty\}$ the corresponding valuation. Let $p$ be a prime ideal with corresponding convex subgroup $\Delta_{p}$ in $\Gamma$ and  $\mathcal{O}_{p}$ the refinement of $\mathcal{O}$. There is a group homomorphism:
 \begin{center}
$\phi: \begin{cases}
K^{\times}/\mathcal{O}^{\times} &\rightarrow K^{\times}/ \mathcal{O}_{p}^{\times}\\
x\mathcal{O}^{\times} &\mapsto x \mathcal{O}_{p}^{\times}. 
\end{cases}$
\end{center}
whose kernel is $\Delta_{p}\cong \mathcal{O}^{\times}_{p}/ \mathcal{O}^{\times}$. The valuation $v_{p}$ induced by $\mathcal{O}_{p}$ is therefore obtain from $v:=K \rightarrow \Gamma \cup \{ \infty\}$ simply by taking the quotient of 
$\Gamma$ by the convex subgroup $\Delta_{p}$. 
 
\begin{definition} Let $M$ and $C$ be valued fields. Let $m_{1},\dots,m_{k}$ be elements of $M$, we write $\Vect_{C}(m_{1},\dots,m_{k})$ for the $C$-vector space generated by $\{m_{1},\dots,m_{k}\}$. We say that $\{m_{1},\dots,m_{k}\}$ is \emph{separated over $C$} if for all $c_{1},\dots,c_{k} \in C$, we have:
\begin{align*}
v \big( \sum_{i=1}^{k} c_{i}m_{i} \big)=\min\{ v(c_{i}m_{i}) \ | \ 1 \leq i \leq k\}.
\end{align*}
In particular, it is a basis for $\Vect_{C}(m_{1},\dots,m_{k})$. In general, we say that $M$ has the \emph{separated basis property} over $C$ if every finite dimensional $C$-subspace $\Vect_{C}(m_{1}, \dots, m_{k})$ where $\{m_{1},\dots,m_{k}\} \subseteq M$ has a separated basis.\\
 If $ C \subseteq M$, a separated basis is said to be \emph{good} if in addition for all $1\leq i,j \leq k$, either $v(m_{i})=v(m_{j})$ or $v(m_{i})-v(m_{j}) \notin \Gamma_{C}$, and we say that $M$ has the \emph{good separated basis property} over $C$ if every finite dimensional $C$-subspace $\Vect_{C}(m_{1}, \dots, m_{k})$ where $\{m_{1},\dots,m_{k}\} \subseteq M$ has a good separated basis.
\end{definition}
The following is a folklore fact, details can be found for example in \cite[Lemma 12.2]{HHM}. 
\begin{fact}\label{maximal} Suppose $C$ is maximally complete. Then every valued field extension $M$ of $C$ has the good separated basis property. 
\end{fact}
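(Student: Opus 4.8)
The plan is to prove Fact~\ref{maximal} by a standard Zorn's lemma / maximality argument, essentially following \cite[Lemma 12.2]{HHM} and the classical theory of maximally complete fields (Kaplansky, Baur). First I would reduce to showing that a single finite set $\{m_1,\dots,m_k\}\subseteq M$ can be turned into a separated basis of the space $V=\Vect_C(m_1,\dots,m_k)$: since the statement concerns arbitrary finite-dimensional $C$-subspaces, it suffices to handle one such $V$ at a time, and replacing the $m_i$ by any $C$-basis of $V$ changes nothing. So fix a $C$-basis $m_1,\dots,m_k$ of $V$.

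The core of the argument is by induction on $k$, the dimension of $V$. The case $k=1$ is trivial. For the inductive step, write $V = W \oplus C m_k$ with $W=\Vect_C(m_1,\dots,m_{k-1})$; by induction we may assume $m_1,\dots,m_{k-1}$ is already a separated basis of $W$. The only thing that can fail is that $v\big(\sum_{i<k} c_i m_i + c_k m_k\big)$ drops below $\min\{v(c_i m_i)\}$ for some choice of $c_i$, which happens precisely when $c_k m_k$ is "too close" to an element of $W$, i.e. when $\inf_{w\in W} v(m_k - w)$ is not attained. Here is where maximal completeness of $C$ enters: if the infimum $\delta := \sup\{ v(m_k - w) : w \in W\}$ (taken in $\Gamma_C \cup\{\infty\}$, or in a suitable completion) is attained by some $w_0 \in W$, then replacing $m_k$ by $m_k' := m_k - w_0$ one checks directly, using the separatedness of the $m_i$ and the strict triangle inequality, that $m_1,\dots,m_{k-1},m_k'$ is a separated basis of $V$. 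The content of maximal completeness is exactly that such "pseudo-limits" exist inside $C$: one expresses a hypothetical non-attained infimum as arising from a pseudo-Cauchy sequence in $C$ with no pseudo-limit in $C$ (built from a descending chain of approximations $w_\alpha \in W$), contradicting that $C$ is maximally complete (equivalently, spherically complete). This uses that $W$ is finite-dimensional over $C$, hence itself maximally complete as a valued $C$-module, so the relevant chains of balls have nonempty intersection.

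For the \emph{good} separated basis property one argues afterward, or simultaneously, by a further normalization: given a separated basis $m_1,\dots,m_k$, partition the indices according to the classes of $v(m_i)$ modulo $\Gamma_C$; within a class where $v(m_i) - v(m_j) \in \Gamma_C$ we may rescale $m_j$ by an element of $C^\times$ so that $v(m_i) = v(m_j)$, and rescaling by elements of $C^\times$ preserves separatedness (it only permutes/relabels the values $v(c_i m_i)$). One must check that this rescaling does not create new coincidences $v(m_i) - v(m_j) \in \Gamma_C$ across classes, which is automatic since the classes mod $\Gamma_C$ are unchanged. This yields the good separated basis property over $C$ for every finite-dimensional subspace, hence the statement.

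The main obstacle — really the only nontrivial point — is the existence of the pseudo-limit $w_0 \in W$ realizing the closest approximation to $m_k$: one has to package the approximation process as a pseudo-Cauchy sequence and invoke maximal completeness of $C$ correctly, being careful that the sequence lives in the finite-dimensional space $W$ and that "spherically complete" transfers from $C$ to $W$. Everything else is bookkeeping with the ultrametric inequality. I would state the pseudo-limit step as an internal claim and cite \cite[Section 2.3]{EP} or Kaplansky's classical results for the transfer of spherical completeness to finite-dimensional extensions, rather than reproving it.
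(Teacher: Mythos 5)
Your argument is correct and is essentially the argument of \cite[Lemma 12.2]{HHM}, which is all the paper itself offers here (it states the fact as folklore and simply cites that lemma): induction on $\dim V$, existence of a best approximation $w_0\in W$ to $m_k$ via spherical completeness of $C$ transferred to the finite-dimensional separated subspace $W$, and a final rescaling by elements of $C^{\times}$ within each class of $v(m_i)$ modulo $\Gamma_C$ to get the \emph{good} property. The only blemish is the phrase ``the infimum $\delta:=\sup\{v(m_k-w): w\in W\}$'' --- the quantity whose attainment matters is the supremum of $v(m_k-w)$, as your subsequent use of $w_0$ and $m_k':=m_k-w_0$ correctly reflects.
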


\begin{definition} Let $K$ and $L$ be fields. A map $\phi: K \rightarrow L \cup \{ +\infty\}$ is a \emph{place over $K$} if for any $x,y \in K$:
\begin{itemize}
\item $\phi(x+y)=\phi(x)+\phi(y)$,
\item $\phi(x\cdot y)= \phi(x) \cdot \phi(y)$,
\item $\phi(1)=1$. 
\end{itemize}
Here for all $a \in L$, the following operations are defined $a+\infty=\infty+a=\infty$, and $a \cdot \infty=\infty \cdot a=\infty \cdot \infty=\infty$. While the operations $\infty+\infty, 0 \cdot \infty, \infty \cdot 0$ are not. 
\end{definition}
The following proposition states the correspondence between places and valuations over a field. This a folklore fact, for example see \cite[ Exercise 2.5.4]{EP}.
\begin{proposition}\label{placeval} Let $K$ and $L$ be fields and $\phi: K \rightarrow L \cup \{ \infty\}$ be a place over $K$. Then $\mathcal{O}=\phi^{-1}(L)$ is a valuation ring of $K$ whose maximal ideal is $\mathcal{M}=\phi^{-1}(\{0\})$ and its residue field is $\phi(K)\backslash \{\infty\}$. Moreover, given a valuation ring $\mathcal{O}$ of $K$ whose maximal ideal is $\mathcal{M}$ the map:
\begin{align*}
\phi: K &\rightarrow \mathcal{O}/\mathcal{M} \cup \{ +\infty \}\\
&\begin{cases}
x \rightarrow x+\mathcal{M} \ \text{if $x \in \mathcal{O}$,}\\
x \rightarrow \infty \ \text{if $x \in K \backslash \mathcal{O}$.}\\
\end{cases}
\end{align*}
\text{is a place of $K$.}
\end{proposition}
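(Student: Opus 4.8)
The plan is to prove both implications directly from the axioms for a place and the definition of a valuation ring; there is no real difficulty beyond careful bookkeeping with the symbol $\infty$, keeping in mind that each place-axiom equation is only being asserted when its right-hand side is defined (the conventions listed just after the definition make $\infty+\infty$, $0\cdot\infty$ and $\infty\cdot 0$ undefined).

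First, given a place $\phi\colon K\to L\cup\{\infty\}$, I would show that $\mathcal{O}:=\phi^{-1}(L)$ is a subring: from $\phi(0)=\phi(0+0)=\phi(0)+\phi(0)$ and $\phi(-1)^2=\phi(1)=1$ one gets $\phi(0)=0$ and $\phi(-1)\in L$ (an expression $\infty+\infty$ or $\infty\cdot\infty=\infty$ could not equal $0$ or $1$), and then closure of $\mathcal{O}$ under $+$, $\cdot$ and negation is immediate because $L$ is a field. To see $\mathcal{O}$ is a valuation ring, take $x\in K^\times$ with $x\notin\mathcal{O}$, so $\phi(x)=\infty$; applying multiplicativity to $\phi(x\cdot x^{-1})=\phi(1)=1$ and using that $\infty\cdot\infty=\infty$ and $\infty\cdot a=\infty$ for $a\in L\setminus\{0\}$ forces $\phi(x^{-1})=0$, so $x^{-1}\in\phi^{-1}(0)\subseteq\mathcal{O}$. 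The same computation shows $\mathcal{M}:=\phi^{-1}(0)$ is exactly the set of non-units of $\mathcal{O}$; it is an ideal (closed under addition, and under multiplication by $\mathcal{O}$ since $\phi(a)\cdot 0=0$ whenever $\phi(a)\in L$), hence the unique maximal ideal. Finally $\phi|_{\mathcal{O}}$ is a ring homomorphism onto $\phi(\mathcal{O})$ with kernel $\mathcal{M}$, and $\phi(\mathcal{O})$ is a field because if $\phi(a)\neq 0$ then $a$ is a unit of $\mathcal{O}$ and $\phi(a^{-1})=\phi(a)^{-1}$; so the induced isomorphism identifies the residue field $\mathcal{O}/\mathcal{M}$ with $\phi(\mathcal{O})=\phi(K)\cap L$, as claimed.

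For the converse, I would verify the three place axioms for the map $\phi$ attached to a valuation ring $\mathcal{O}$ with maximal ideal $\mathcal{M}$ by a short case split on which of the inputs lie in $\mathcal{O}$. The identity $\phi(1)=1$ is clear. For $\phi(xy)=\phi(x)\phi(y)$: when $x,y\in\mathcal{O}$ this is multiplicativity of the residue map; when exactly one of them — say $x$ — fails to lie in $\mathcal{O}$ while $\phi(y)\neq 0$ (so $y$ is a unit of $\mathcal{O}$), one has $xy\notin\mathcal{O}$, for otherwise $x=(xy)y^{-1}\in\mathcal{O}$, and both sides equal $\infty$; when both $x,y\notin\mathcal{O}$ then $(xy)^{-1}=x^{-1}y^{-1}\in\mathcal{M}$, so again $xy\notin\mathcal{O}$ and both sides equal $\infty$; the configurations left out are exactly those producing an undefined $0\cdot\infty$. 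Additivity $\phi(x+y)=\phi(x)+\phi(y)$ is handled the same way: for $x,y\in\mathcal{O}$ it is additivity of the residue map; if $x\notin\mathcal{O}$ and $y\in\mathcal{O}$ then $x+y\notin\mathcal{O}$, for otherwise $x=(x+y)-y\in\mathcal{O}$, so both sides are $\infty$; and the case $x,y\notin\mathcal{O}$ is precisely where $\infty+\infty$ is undefined.

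I do not expect a genuine obstacle here — this is the classical place/valuation dictionary — so the only thing to be disciplined about is the arithmetic of $\infty$: invoking each place-axiom equation only where its right-hand side is defined, and conversely checking that in every such case the relevant element of $K$ lands on the correct side of $\mathcal{O}$. For completeness I would also remark that the two constructions are mutually inverse — feeding $\mathcal{O}$ into the second construction and then taking the preimage of $L$ returns $\mathcal{O}$, and the place built from $\mathcal{O}=\phi^{-1}(L)$ coincides with $\phi$ after identifying $\mathcal{O}/\mathcal{M}$ with $\phi(\mathcal{O})$ — which is what the ``moreover'' is implicitly asserting.
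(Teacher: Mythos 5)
Your argument is the standard place/valuation dictionary and is essentially complete; the paper itself gives no proof of this proposition, simply citing it as a standard exercise in Engler--Prestel, so there is nothing to compare against. One small point to tighten: your parenthetical justification for $\phi(0)=0$ does not quite work, because if $\phi(0)$ were $\infty$ the right-hand side $\phi(0)+\phi(0)=\infty+\infty$ is simply undefined and the additivity axiom is silent, rather than yielding a defined value that "could not equal $0$"; to exclude $\phi(0)=\infty$ you should instead use $\phi(1)=\phi(1+0)=\phi(1)+\phi(0)=1+\infty=\infty$, contradicting $\phi(1)=1$, and only then run your computation $\phi(0)=\phi(0)+\phi(0)$ inside $L$. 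The rest of the case analysis, including which configurations are excused by the undefinedness of $\infty+\infty$ and $0\cdot\infty$, is handled correctly.
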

\begin{notation} Let $\Gamma$ be an ordered abelian group, and let $\gamma, \delta \in \Gamma$. We write $\gamma \ll \delta$  to indicate that $n\gamma< \delta$ for all $n \in \mathbb{N}$. 
\end{notation}
We conclude this subsection stating a lemma that we will need to obtain a domination result by the internal sorts of the residue field over the value group. 
\begin{lemma}\label{inf} Let $v: L\rightarrow \Gamma$ be a valuation on a field $L$. Let $p: L \rightarrow \res(L) \cup \{\infty \}$ be the place corresponding to the valuation $v$ and $F$ be a subfield of $\res(L)$ and $p':\res(L) \rightarrow F$ be a place which is the identity on $F$. Let $p^{*}$ the composition of places $p' \circ p: L \rightarrow F$, and $v^{*}:L \rightarrow \Gamma^{*}$ the induced valuation. Suppose that $a \in L$ with $p(a) \subseteq \res(L) \backslash \{ 0\}$ and $p^{*}(a)=0$. Then:
\begin{enumerate}
\item if $\Delta=\{ v^{*}(x), -v^{*}(x) \ | \ x \in L,\ p(x) \notin \{ \infty, 0\} \ , p^{*}(x)=0\} \cup \{ 0_{\Gamma^{*}}\}$. Then $\Delta$ is a convex subgroup of $\Gamma^{*}$ and there is an isomorphism of ordered abelian groups $g: \Gamma^{*}/\Delta \rightarrow \Gamma$ such that $g\circ v^{*}=v$,
\item if $b \in L$ with $v(b)>0$, then $0 <v^{*}(a) \ll v^{*}(b)$,
\item let $M \subseteq L$ be a subfield. If $(r_{1},\dots,r_{n})$ is a separated basis of the $M$-vector subspace $\Vect_{M}(r_{1},\dots,r_{n})$ according to the valuation $v^{*}$ then it is also a separated basis according to the valuation $v$. Furthermore, if $\displaystyle{v^{*} \big( \sum_{i=1}^{n}r_{i}m_{i}\big)= \min\{ v^{*}(r_{i}m_{i}) \ | \ i \leq n \} = v^{*}(r_{j}m_{j})}$, then  $\displaystyle{v \big( \sum_{i=1}^{n}r_{i}m_{i}\big)= \min\{ v(r_{i}m_{i}) \ | \ i \leq n \} = v(r_{j}m_{j})}$.
\end{enumerate}
\end{lemma}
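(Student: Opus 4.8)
The plan is to realize $v^{*}$ as a valuation of $L$ whose valuation ring $\mathcal{O}^{*}=(p^{*})^{-1}(F)$ is contained in the valuation ring $\mathcal{O}$ of $v$, so that $v$ is the valuation obtained from $v^{*}$ by collapsing a convex subgroup of $\Gamma^{*}$, and then to read off (2) and (3) as order-theoretic consequences.

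For (1), I would first note that by Proposition~\ref{placeval} the set $(p')^{-1}(F)$ is a valuation ring of $\res(L)$, in particular a subring of $\res(L)$; hence $\mathcal{O}^{*}=p^{-1}\big((p')^{-1}(F)\big)\subseteq p^{-1}(\res(L))=\mathcal{O}$, so $\mathcal{O}$ is an overring of $\mathcal{O}^{*}$. Lemma~\ref{covexprime} applied to $v^{*}$, together with the computation of the kernel recorded after it, then yields a convex subgroup $\Delta_{0}\leq\Gamma^{*}$ and an order isomorphism $g\colon\Gamma^{*}/\Delta_{0}\to\Gamma$ with $g\circ\pi\circ v^{*}=v$ (where $\pi\colon\Gamma^{*}\to\Gamma^{*}/\Delta_{0}$ is the quotient map), namely $\Delta_{0}=\ker\!\big(L^{\times}/(\mathcal{O}^{*})^{\times}\to L^{\times}/\mathcal{O}^{\times}\big)=\{\,v^{*}(x):x\in L,\ v(x)=0\,\}$. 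It then remains to identify $\Delta_{0}$ with the set $\Delta$ displayed in the statement: if $v(x)=0$ then $p(x)\notin\{0,\infty\}$, and writing $\gamma=v^{*}(x)$ one has $\gamma>0$, $\gamma=0$, or $\gamma<0$ according as $p^{*}(x)=0$, $p^{*}(x)\in F^{\times}$, or $p^{*}(x)=\infty$; in the last case $x^{-1}$ contributes $-\gamma$ to the listed generators. This shows $\Delta_{0}=\Delta$, and (1) follows.

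For (2): since $p(a)\neq0$, $a$ is a $v$-unit, so $v(a)=0$ and $v^{*}(a)\in\Delta$; since $p^{*}(a)=0$ and $a\neq0$, $v^{*}(a)>0$. From $v(b)>0=v(1)$ and $g\circ\pi\circ v^{*}=v$ with $g$ an order isomorphism we get $\pi(v^{*}(b))>0$, which, $\Delta$ being convex, forces $v^{*}(b)>\delta$ for every $\delta\in\Delta$; as $\Delta$ is a subgroup, $n\,v^{*}(a)\in\Delta$ for all $n\in\mathbb{N}$, whence $n\,v^{*}(a)<v^{*}(b)$, i.e. $0<v^{*}(a)\ll v^{*}(b)$. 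For (3): write $v=g\circ\pi\circ v^{*}$ with $\pi,g$ order-preserving. An order-preserving map sends the least element of a finite set to the least element of the image, so applying $g\circ\pi$ to $v^{*}\big(\sum_{i}r_{i}m_{i}\big)=\min_{i}v^{*}(r_{i}m_{i})$ gives $v\big(\sum_{i}r_{i}m_{i}\big)=\min_{i}v(r_{i}m_{i})$ for all $m_{i}\in M$; since linear independence over $M$ does not depend on the valuation, $(r_{1},\dots,r_{n})$ is again a separated basis for $v$. If the $v^{*}$-minimum is attained at $j$, then $v(r_{j}m_{j})=g(\pi(v^{*}(r_{j}m_{j})))=g(\pi(\min_{i}v^{*}(r_{i}m_{i})))=\min_{i}v(r_{i}m_{i})$, which gives the last clause.

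I expect the only real friction to be in (1): getting the direction of the inclusion $\mathcal{O}^{*}\subseteq\mathcal{O}$ right and matching the convex subgroup produced abstractly by Lemma~\ref{covexprime} with the explicit symmetric generating set in the statement. Parts (2) and (3) are then short order-theoretic arguments using the order isomorphism $g$.
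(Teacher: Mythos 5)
Your proposal is correct, and it supplies exactly the details the paper omits: the paper's ``proof'' of Lemma \ref{inf} consists of citing \cite[Lemma 12.16]{HHM} for parts (1) and (2) and leaving (3) to the reader. Your route --- identifying $\mathcal{O}^{*}=(p^{*})^{-1}(F)\subseteq\mathcal{O}$ via Proposition \ref{placeval}, invoking the overring/convex-subgroup correspondence of Lemma \ref{covexprime} to get $g$ with $g\circ\pi\circ v^{*}=v$, and then checking that the kernel $\{v^{*}(x):v(x)=0\}$ coincides with the symmetric set $\Delta$ by the three-way case split on $p^{*}(x)$ --- is the standard argument and is the one implicitly being cited. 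Parts (2) and (3) are handled correctly as order-theoretic consequences of the fact that $g\circ\pi$ is an order-preserving homomorphism killing exactly $\Delta$; in particular your observation that a monotone map carries the minimum of a finite set to the minimum of its image is all that part (3) needs. The only cosmetic caveat is that the paper's statement writes $g\circ v^{*}=v$ where it means $g\circ\pi\circ v^{*}=v$, which you already correct silently.
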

\begin{proof}
The first and the second statement are proved in \cite[Lemma 12.16]{HHM}. The third one is a standard computation that we leave to the reader. 
\end{proof}

\subsection{Valued fields and relative quantifier elimination}\label{lingue}
In this section, we summarize many results on valued fields that will be used through the paper. There are many languages in which one can view a valued field $K$ to obtain field quantifier elimination statements, we introduce them in detail and state their corresponding relative quantifier elimination. In this paper we are only concerned about henselian valued fields of equicharacteristic zero.

\begin{definition}\label{1}[The $\mathcal{L}_{\val}$-language]
 Let $(K, \mathcal{O})$ be a henselian valued field of equicharacteristic zero, where $\mathcal{O}$ is the valuation ring and $\mathcal{M}$ is the maximal ideal of $\mathcal{O}$.  Any henselian valued field can be seen as a three sorted structure $(K, \mathbf{k}, \Gamma)$ where the first two sorts are equipped with the language of fields while the third one is provided with the language of ordered abelian groups $\mathcal{L}_{OAG}=\{ 0, <, +, -\}$ extended by a constant $\infty$. We interpret the first sort as the main field, the second one as the residue field and the third one as the value group $\Gamma$ with a constant $\infty$, where $\gamma< \infty$ for all $\gamma \in \Gamma$ and $\gamma+\infty= \infty+\gamma=\infty$ for all $\gamma \in \Gamma \cup \{ \infty \}$. We add a function symbol $v: K \rightarrow \Gamma \cup \{ \infty\}$ by extending the valuation to a monoid morphism sending $v(0)=\infty$. We add as well a function symbol $\res: K \rightarrow \mathbf{k}$, which sends an element $a \in \mathcal{O}$ to its residue class $\res(a)=a +\mathcal{M}$, while $\res(a)=0$ for any element $a \in K \backslash \mathcal{O}$. We refer to this language as the $\mathcal{L}_{\val}$-language.
\end{definition}
\begin{notation} Let $L$ be a henselian valued field we will denote as $k_{L}$ its residue field and $\Gamma_{L}$ its value group. 
\end{notation}
Certain classes of henselian valued fields of equicharacteristic zero eliminate field quantifiers in the $\mathcal{L}_{\val}$-language. For example, the following is a well known fact.
\begin{theorem}\label{QERACF} Let $K$ be a henselian valued field of equicharacteristic zero with residue field algebraically closed, then $K$ eliminates quantifiers relative to the value group in the language $\mathcal{L}_{\val}$. 
\end{theorem}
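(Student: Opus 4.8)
The plan is to prove the relative quantifier elimination by combining a general Ax--Kochen--Ershov style back-and-forth with the fact that the residue field is algebraically closed, so that no residue-field data needs to be tracked separately. Concretely, I would appeal to a known coarse relative quantifier elimination theorem for henselian valued fields of equicharacteristic zero relative to the pair $(\mathbf{k},\Gamma)$ (this is the classical Ax--Kochen--Ershov / Pas-type result, available in the $\mathcal{L}_{\val}$-language once one knows elimination of field quantifiers), and then observe that when $\mathbf{k}$ is algebraically closed the residue-field sort is itself fully eliminated down to the value group: the theory of algebraically closed fields has quantifier elimination in the pure field language, and more importantly any $\mathcal{L}_{\val}$-definable set living in a cartesian power of $\mathbf{k}$ is already definable in $\mathbf{k}$ alone with parameters (stable embeddedness of $\mathbf{k}$), hence is a Boolean combination of polynomial equations.

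First I would set up the back-and-forth system: given two $\aleph_1$-saturated models $K_1, K_2$ of $T$ with a common substructure and with the induced structures on their value groups $(\Gamma_1,<,+)$ and $(\Gamma_2,<,+)$ being elementarily equivalent and suitably amalgamated, I want to extend a partial isomorphism. The standard Ax--Kochen--Ershov argument reduces this to three moves: (i) extending a residue-field element, (ii) extending a value-group element, and (iii) taking an immediate extension / pseudo-Cauchy completion step using henselianity. Move (ii) is handled by the hypothesis that we are eliminating relative to $\Gamma$. Move (iii) is the equicharacteristic-zero henselianity input (an immediate extension of a henselian equicharacteristic-zero field embeds over the base once the value group and residue field are matched). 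The crucial point is move (i): normally this is where residue-field parameters enter, but if $k$ is algebraically closed then any residue-field element is algebraic over $\res$ of the base field together with coefficients that are themselves residues, and the theory $\mathrm{ACF}$ pins down its type by the polynomial it satisfies; thus the choice of such an element does not require new data beyond what is already visible, and in particular introduces no new value-group constraints. Hence the only sort whose induced structure must be named is $\Gamma$.

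The main obstacle I anticipate is making precise the interaction between the residue-field extension step and the value group: when we adjoin a new residue-field element and then lift it to the valued field, we must check that no hidden constraint in $\Gamma$ is created, i.e. that the value group of the extension is controlled. In equicharacteristic zero this is clean because lifting a residue element can be done without changing $\Gamma$ at all (choose any lift in $\mathcal{O}$), so the value group is unchanged and the amalgamation on $\Gamma$ already in hand suffices. A secondary technical point is that $\res$ on elements outside $\mathcal{O}$ is set to $0$, so one must verify the back-and-forth respects this convention; this is routine since the map $v$ is also present and $v(x)<0 \iff x\notin\mathcal{O}$ is quantifier-free. Once the back-and-forth goes through, standard arguments convert it into the statement that every formula is equivalent, modulo $T^{\Gamma\text{-}Mor}$, to one without field quantifiers, which is exactly elimination of quantifiers relative to $\Gamma$; alternatively, one cites the $\mathcal{L}_{\val}$ field-quantifier elimination for this class (e.g. as recorded in the literature on henselian valued fields of residue characteristic zero) and then pushes the residue-field quantifiers down using $\mathrm{ACF}$'s quantifier elimination and stable embeddedness of $\mathbf{k}$.
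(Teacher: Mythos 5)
The paper itself offers no proof of Theorem \ref{QERACF} (it is quoted as a well-known fact), so your proposal can only be judged on its own terms, and there are two genuine problems with it. First, your primary input --- ``a known coarse relative quantifier elimination theorem for henselian valued fields of equicharacteristic zero relative to the pair $(\mathbf{k},\Gamma)$ \dots available in the $\mathcal{L}_{\val}$-language'' --- does not exist. In the bare three-sorted language $\mathcal{L}_{\val}$, a general henselian equicharacteristic zero field does \emph{not} eliminate field quantifiers relative to $\mathbf{k}$ and $\Gamma$: in $\mathbb{R}((t))$ the set of squares is not definable from $\res$ and $v$ alone, since $\res(x)=0$ whenever $v(x)>0$ and the relevant leading-coefficient information is invisible. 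This is exactly why Pas's theorem requires the angular component $\ac$ and why Basarab--Kuhlmann work relative to $RV$ (Proposition \ref{relativeRV}). The hypothesis that $\mathbf{k}$ is algebraically closed is precisely what kills this obstruction --- $\mathbf{k}^{\times}$ is divisible, so the structure $(\mathbf{k},RV,\Gamma)$ itself eliminates quantifiers relative to $\Gamma$ (Proposition \ref{RVRACF}) --- so it cannot be treated as an afterthought used only to clean up residue-field quantifiers at the end. The correct short proof is to compose Proposition \ref{relativeRV} with Proposition \ref{RVRACF}; your ``alternative'' closing route is circular unless the citation you have in mind is already the theorem for this specific class.

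Second, in your back-and-forth sketch you locate the use of algebraic closedness of $\mathbf{k}$ entirely in the residue-field extension step (move (i)), whereas the essential use is in the value-group extension step (move (ii)). Adjoining $a$ with $v(a)=\gamma$ new is unproblematic only when $\gamma$ is non-torsion modulo the value group of the base; in the ramified case $n\gamma=v(c)$ with $c$ in the base, the isomorphism type of the extension depends on $\res(a^{n}/c)$, and matching this on the other side requires extracting $n$-th roots in the residue field --- i.e.\ divisibility of $\mathbf{k}^{\times}$. Saying that move (ii) ``is handled by the hypothesis that we are eliminating relative to $\Gamma$'' conflates extending the partial map on the $\Gamma$-sort (which is free) with extending the valued-field isomorphism to an element realizing that value (which is where the work, and the hypothesis, lies). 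With these two points repaired the back-and-forth does go through, but as written the proposal both cites a false general theorem and misattributes the role of the main hypothesis.
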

An immediate consequence of this theorem is the following statement. 
\begin{corollary}\label{seracf} Let $(K,\mathbf{k},\Gamma \cup \{ \infty\},\res,v)$ be a henselian valued field of equicharacteristic zero with residue field algebraically closed, then $\mathbf{k}$ and $\Gamma$ are purely stably embedded and orthogonal to each other.
\end{corollary}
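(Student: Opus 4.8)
The plan is to extract all three assertions directly from the relative quantifier elimination of Theorem~\ref{QERACF} by a purely syntactic analysis of $\mathcal{L}_{\val}$-terms. First I would record the consequence of Theorem~\ref{QERACF} together with the remark following the definition of relative quantifier elimination: since $K$ eliminates quantifiers relative to the $\Gamma$-sort, it eliminates $\{K,\mathbf{k}\}$-quantifiers, so in the structure obtained by naming every $\mathcal{L}\upharpoonright_{\Gamma}$-definable set each formula is equivalent to a Boolean combination of atomic $\mathcal{L}_{\val}$-formulas and formulas all of whose free variables and parameters lie in the $\Gamma$-sort (call these ``$\Gamma$-formulas''). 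The structural observation that does all the work concerns terms: the only function symbols crossing sorts are $v\colon K\to\Gamma\cup\{\infty\}$ and $\res\colon K\to\mathbf{k}$, both with domain the field sort $K$, and there is no function symbol whose value lies in $K$ or in $\mathbf{k}$ taking inputs from $\Gamma$, nor one whose value lies in $K$ taking inputs from $\mathbf{k}$. Hence a term of sort $K$ has all of its variables of sort $K$; a term of sort $\mathbf{k}$ has all of its variables of sort $\mathbf{k}$ except for those occurring inside a subterm $\res(s)$ with $s$ of sort $K$; and symmetrically a term of sort $\Gamma$ has all of its variables of sort $\Gamma$ except for those inside a subterm $v(s)$.

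For stable embeddedness of $\mathbf{k}$, I would take $X\subseteq\mathbf{k}^{n}$ defined by $\phi(\bar x,\bar a)$ with $\bar x$ of sort $\mathbf{k}$, and put $\phi$ into the normal form above. By the term analysis an atomic $\mathcal{L}_{\val}$-subformula either does not involve $\bar x$ at all --- in which case, with $\bar a$ fixed, it becomes a sentence with parameters and may be replaced by a truth constant in the Boolean combination --- or it is an equation $p(\bar x)=0$ for a polynomial $p$ over the subfield $k_{0}\subseteq\mathbf{k}$ generated by the $\mathbf{k}$-entries of $\bar a$ together with the residues $\res(a)$ of the $K$-entries $a$ of $\bar a$ (equations are the only atomic formulas available on $\mathbf{k}$, and every $\mathbf{k}$-valued term in $\bar x$ reduces to such a polynomial once the $\res$-subterms, which contain only $K$-parameters, are evaluated); a $\Gamma$-formula never involves $\bar x$. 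Thus $X$ is a Boolean combination of polynomial equations over $k_{0}$, hence definable in the pure field language with parameters from $\mathbf{k}$, so $\mathbf{k}$ is stably embedded and, being algebraically closed, carries exactly the structure of a pure algebraically closed field. The argument for $\Gamma$ is identical with the roles of the sorts exchanged: a definable $Y\subseteq\Gamma^{n}$ becomes a Boolean combination of $\mathcal{L}_{OAG}$-atomic formulas (with $\infty$) in its variables, whose parameters are $\mathcal{L}_{OAG}$-combinations of the $\Gamma$-entries of $\bar a$ and of $v(a)$ for $K$-entries $a$ of $\bar a$, together with $\Gamma$-formulas; hence $Y$ is definable in $(\Gamma\cup\{\infty\},\mathcal{L}_{OAG})$ with parameters from $\Gamma\cup\{\infty\}$, so $\Gamma$ is stably embedded with only its ordered abelian group structure induced.

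For orthogonality I would take a formula $\phi(\bar x,\bar y,\bar a)$ with $\bar x$ of sort $\mathbf{k}$ and $\bar y$ of sort $\Gamma$ and again pass to the normal form. By the term analysis no atomic $\mathcal{L}_{\val}$-subformula can involve both a variable from $\bar x$ and a variable from $\bar y$: an equation of sort $\mathbf{k}$ involves only $\bar x$, an atomic formula of sort $\Gamma$ involves only $\bar y$, an atomic formula of sort $K$ involves neither, and the $\Gamma$-formulas involve only $\bar y$. So after fixing $\bar a$ every atomic piece is either a field formula $\theta(\bar x)$ over $\mathbf{k}$-parameters, or an $\mathcal{L}_{OAG}$-formula $\eta(\bar y)$ over $\Gamma\cup\{\infty\}$-parameters, or a truth constant. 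Writing the Boolean combination in disjunctive normal form over the finitely many distinct atomic pieces that occur, each disjunct is a conjunction of $\theta$'s and $\eta$'s, i.e.\ a rectangle $\theta_{i}(\mathbf{k}^{n},\bar a_{i})\times\eta_{i}(\Gamma^{r},\bar a_{i}')$, and taking the union over disjuncts yields exactly the decomposition $\phi(\mathbf{k}^{n},\Gamma^{r},\bar a)=\bigcup_{i<k}\theta_{i}(\mathbf{k}^{n},\bar a_{i})\times\eta_{i}(\Gamma^{r},\bar a_{i}')$ required by the definition of orthogonality.

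The only genuine obstacle is the bookkeeping in the term analysis of the first paragraph --- verifying that no $\mathcal{L}_{\val}$-term can feed $\Gamma$- or $\mathbf{k}$-data back into the field sort, and that the conventions $\res(a)=0$ for $a\notin\mathcal{O}$ and $v(0)=\infty$ do not manufacture mixed atomic formulas --- but this is routine once one unwinds the definition of $\mathcal{L}_{\val}$; with it in place the three statements are immediate, and essentially the same argument also shows that the induced structure on each of $\mathbf{k}$ and $\Gamma$ is the trivial one.
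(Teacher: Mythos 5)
Your proof is correct and is precisely the routine term-by-term unwinding of the relative quantifier elimination of Theorem \ref{QERACF} that the paper leaves implicit when it declares the corollary an ``immediate consequence''; the key observations (no function symbol maps $\mathbf{k}$ or $\Gamma$ back into $K$, so atomic formulas cannot mix the two sorts, and a disjunctive normal form gives the rectangle decomposition) are exactly the intended argument. Nothing needs to be added.
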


 \begin{definition} Let $(K,\mathbf{k},\Gamma)$ be a valued field an \emph{angular component map} is a map $\ac:K \rightarrow \mathbf{k}$ that satisfies the following conditions:
 \begin{itemize}
 \item $\ac(0)=0$,
 \item for all $x \in \mathcal{O}^{\times} \ $ $\ac(x)=x+\mathcal{M}= \res(x)$, 
 \item for all $x,y \in K \ $ $\ac(xy)=\ac(x) \ac(y)$. 
 \end{itemize}
 \end{definition}
 \begin{definition}\label{2}[The $\mathcal{L}_{\ac}$-language] We denote by $\mathcal{L}_{\ac}$ the expansion of $\mathcal{L}_{\val}$ where an angular component map is added to the language. 
 \end{definition}
In \cite[Theorem 4.1]{Pas} Pas proved that any henselian valued field of equicharacteristic zero eliminates field quantifiers in the $\mathcal{L}_{\ac}$-language, we include the statement for sake of completeness.\\
 Let $\mathcal{K}=(K,\mathbf{k},\Gamma, \res, v, \ac)$ be a valued field of equicharacteristic zero. A good substructure of $\mathcal{K}$ is a triple $\mathcal{E}=(E,\mathbf{k}_{\mathcal{E}}, \Gamma_{\mathcal{E}})$ such that: 
 \begin{itemize}
 \item $E$ is a subfield of $K$,
 \item $\mathbf{k}_{\mathcal{E}}$ is a subfield of $\mathbf{k}$ with $\ac(E) \subseteq \mathbf{k}_{\mathcal{E}}$ (In particular, $\res(\mathcal{O}_{E}) \subseteq \mathbf{k}_{\mathcal{E}}$), 
 \item $\Gamma_{\mathcal{E}}$ is an ordered abelian subgroup of $\Gamma$ with $v(E^{\times}) \subseteq \Gamma_{\mathcal{E}}$. 
 \end{itemize}
 \begin{definition} Let $\mathcal{K}$ and $\mathcal{K}'$ be henselian valued fields of equicharacteristic zero seen as $\mathcal{L}_{\ac}$-structures and let $\mathcal{E}=(E,\mathbf{k}_{\mathcal{E}}, \Gamma_{\mathcal{E}}), \mathcal{E}'=(E',\mathbf{k}_{\mathcal{E}'}, \Gamma_{\mathcal{E}'})$ be good substructures of $\mathcal{K}$ and $\mathcal{K}^{'}$ respectively. A triple $\mathbf{f}=(f, f_{r}, f_{v})$ is said to be a good map, if $f: E \rightarrow E'$ and  $f_{r}: \mathbf{k}_{\mathcal{E}} \rightarrow \mathbf{k}_{\mathcal{E}'}$ are field isomorphisms and $f_{v}:\Gamma_{\mathcal{E}} \rightarrow \Gamma_{\mathcal{E}'}$ is a $\mathcal{L}_{OAG}$- ordered group isomorphism such that:
 \begin{itemize}
 \item $f_{r}(\ac(a))=\ac'(f(a))$ for all $a \in E$ and $f_{r}$ is a partial elementary map between the fields $\mathbf{k}$ and $\mathbf{k}'$,
 \item $f_{v}(v(a))= v^{'}(f(a))$ for all $a \in E^{\times}$, and $f_{v}$ is a partial elementary map between the ordered abelian groups $\Gamma$ and $\Gamma'$. 
 \end{itemize}
 \end{definition}
 \begin{theorem}\label{Pas}[Pas] Let $\mathcal{K}$ and $\mathcal{K}'$ be two henselian valued fields of equicharacteristic zero in the $\mathcal{L}_{\ac}$-language. Let $\mathbf{f}: \mathcal{E} \rightarrow \mathcal{E}'$ be a good map, then $\mathbf{f}$ is elementary. 
 \end{theorem}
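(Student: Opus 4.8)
The plan is to prove Theorem~\ref{Pas} by a back-and-forth argument. First I would pass to $\kappa$-saturated and strongly $\kappa$-homogeneous elementary extensions of $\mathcal{K}$ and $\mathcal{K}'$ for some $\kappa > |\mathcal{E}| + |\mathcal{E}'| + \aleph_0$; since being a good map and being elementary are both preserved under such extensions, it suffices to show that the family $\mathcal{F}$ of good maps between good substructures of cardinality $<\kappa$ has the back-and-forth property: for $\mathbf{f}\colon\mathcal{E}\to\mathcal{E}'$ in $\mathcal{F}$ and any $c\in K$ there is $\mathbf{g}\in\mathcal{F}$ extending $\mathbf{f}$ with $c$ in the field domain of $\mathbf{g}$, and symmetrically on the $\mathcal{K}'$ side. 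A standard argument then gives that every member of $\mathcal{F}$, in particular $\mathbf{f}$, is elementary.

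Next I would carry out a chain of preliminary extensions, re-checking at each step that the three compatibility conditions of a good map survive; elementarity of $f_r$ and $f_v$ is automatic below, since the residue field and value group of the domain grow in an explicitly controlled way. (1) \emph{Henselization}: since $K$ is henselian, the henselization $E^{h}$ of $E$ in $K$ is unique over $E$, so $f$ extends uniquely to $E^{h}\to(E')^{h}$; as $E^{h}/E$ is immediate, $v$ and $\res$ on $E^{h}$ stay inside $\Gamma_{\mathcal{E}}$ and $\mathbf{k}_{\mathcal{E}}$, and $\ac$ on $E^{h}$ is forced by $\ac(y)=\res(y/e)\cdot\ac(e)$ whenever $v(y)=v(e)$, $e\in E^{\times}$, so the extension is a good map; hence assume $E$ henselian. (2) \emph{Realizing the abstract residue and value parts}: using saturation, realize each $\bar\xi\in\mathbf{k}_{\mathcal{E}}$ by some $\xi\in\mathcal{O}_{K}$ with $\res(\xi)=\bar\xi$, adjoin it to $E$, and match it on the other side using that $f_{r}$ is elementary; then realize each $\gamma\in\Gamma_{\mathcal{E}}$ by some $c\in K$ with $v(c)=\gamma$ and $\ac(c)=1$ (possible once $\mathbf{k}_{\mathcal{E}}\subseteq\res(\mathcal{O}_{E})$), matching on the other side using that $f_{v}$ is elementary. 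Iterating, we may assume $\mathbf{k}_{\mathcal{E}}=\res(\mathcal{O}_{E})$ and $\Gamma_{\mathcal{E}}=v(E^{\times})$.

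Given $c\in K\setminus E$, I would then split into cases. If $v(c)\in\Gamma_{E}$, rescale by an element of $E$ so that $v(c)=0$; if $\res(c)$ is transcendental over $k_{E}$, then $E(c)/E$ is residue-transcendental and mapping $c$ to a lift of a transcendental over $k_{E'}$ realizing the $f_{r}$-type of $\res(c)$ yields a good map. If $v(c)$ is non-torsion modulo $\Gamma_{E}$, then after rescaling so that $\ac(c)=1$ the extension $E(c)/E$ is value-transcendental and mapping $c$ to any $c'$ with $v(c')=f_{v}(v(c))$ and $\ac(c')=1$ works. The remaining finite-extension cases ($\res(c)$ algebraic over $k_{E}$, or $v(c)$ torsion modulo $\Gamma_{E}$) reduce, after adjoining suitable roots and using henselianity and $\operatorname{char}=0$, to small residue-field or value-group enlargements handled as in step~(2), since $\mathbf{k}_{\mathcal{E}}\subseteq k_{K}$ and $\Gamma_{\mathcal{E}}\subseteq\Gamma_{K}$. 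The last and crucial case is when $E(c)/E$ is immediate with $c\notin E$: here equicharacteristic zero enters, since $E$ is henselian with residue field of characteristic zero, hence algebraically maximal and with a unique maximal immediate extension (Kaplansky). One either extends $\mathbf{f}$ to an isomorphism $E^{\max}\to(E')^{\max}$ of maximal immediate extensions, which exist in $K$ and $K'$ by saturation, after which the domain is maximally complete and admits no further immediate extension; or, equivalently, writes $c$ as a pseudo-limit of a pseudo-Cauchy sequence $(a_{\rho})$ from $E$ with no pseudo-limit in $E$, observes that in residue characteristic zero this sequence is of transcendental type, finds by saturation a pseudo-limit $c'\in K'$ of $(f(a_{\rho}))$ with no pseudo-limit in $E'$, and sets $f(c)=c'$; this is a good map because $f_{r},f_{v}$ are unchanged and $\ac(c)=\ac(a_{\rho})$ for large $\rho$. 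I expect this immediate-extension case — in particular the use of equicharacteristic zero to ensure the relevant pseudo-Cauchy sequences have transcendental type (equivalently, uniqueness of maximal immediate extensions via Kaplansky) — to be the main obstacle, with a secondary recurring difficulty being the bookkeeping of the angular component map and the re-verification of the good-map conditions at each extension step.
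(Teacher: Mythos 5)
The paper does not prove this statement: it is quoted as Pas's theorem (\cite[Theorem 4.1]{Pas}) and used as a black box, so there is no in-paper argument to compare yours against. Your outline is, in substance, the standard proof of that result — the Ax--Kochen--Ershov-style back-and-forth with angular components: saturate, close under henselization, absorb the abstract residue-field and value-group parts of the good substructure, and then classify singleton extensions into residue-transcendental, value-transcendental, algebraic, and immediate types, with equicharacteristic zero entering exactly where you say it does (henselian implies algebraically maximal, so pseudo-Cauchy sequences without limits are of transcendental type, i.e.\ Kaplansky's hypotheses hold trivially). The outline is correct; the two places where it is thinnest, and which you would need to write out, are (i) the reduction of an arbitrary $E(c)/E$ to a finite tower of the pure types (this is where the Zariski--Abhyankar inequality is used to see the process terminates, leaving an immediate extension at the top), and (ii) the bookkeeping that before matching a new residue or value, you must first extend $f_r$ (resp.\ $f_v$) to a partial elementary map containing $\res(c)$ (resp.\ $v(c)$), which saturation of $\mathbf{k}'$ and $\Gamma'$ permits; your parenthetical justification for arranging $\ac(c)=1$ is also slightly off — it needs only surjectivity of $\res$ on $\mathcal{O}_K$, not $\mathbf{k}_{\mathcal{E}}\subseteq\res(\mathcal{O}_E)$. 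Neither point is a gap in the approach, only in the level of detail.
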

The following statement is an immediate consequence of Theorem \ref{Pas}.
\begin{corollary}\label{seac} Let $\mathcal{K}=(K,\mathbf{k}, \Gamma, \res, v, \ac)$ be a henselian valued field of equicharacteristic zero. Then the residue field and the value group are purely stably embedded and are orthogonal to each other. 
\end{corollary}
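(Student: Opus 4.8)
The plan is to deduce this directly from Pas's relative quantifier elimination (Theorem~\ref{Pas}), in exactly the way Corollary~\ref{seracf} is deduced from Theorem~\ref{QERACF}. The structural point is that in the language $\mathcal{L}_{\ac}$ the residue field sort $\mathbf{k}$ and the value group sort $\Gamma$ communicate only through the home sort: the only function symbols whose domain or range meets both $\mathbf{k}$ and $\Gamma$ are $v$, $\res$ and $\ac$, and each of these has domain $K$. Consequently, once field quantifiers are eliminated, a formula all of whose free and bound variables lie in $\mathbf{k}\cup(\Gamma\cup\{\infty\})$ can refer to a parameter from the field sort only through the constants that parameter produces under $v$, $\res$ and $\ac$.

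Concretely, I would argue as follows. Let $\phi(\bar y,\bar z,\bar a)$ be an $\mathcal{L}_{\ac}$-formula with $\bar y$ a tuple of $\mathbf{k}$-variables, $\bar z$ a tuple of $\Gamma$-variables, and $\bar a$ parameters from $\mathcal{K}$ in arbitrary sorts. By Theorem~\ref{Pas}, $\phi$ is equivalent (modulo the theory) to a formula $\phi'$ whose quantifiers range only over $\mathbf{k}$ and $\Gamma$. Since $\phi'$ has no field variables, every field-sort term occurring in it is a polynomial over the prime field in the field coordinates of $\bar a$, hence denotes a fixed element of $K$; replacing each maximal such term $t$ that appears inside $v$, $\res$ or $\ac$ by the constant $v(t)\in\Gamma\cup\{\infty\}$, $\res(t)\in\mathbf{k}$, or $\ac(t)\in\mathbf{k}$, the formula $\phi'$ becomes a Boolean combination of atomic formulas, each of which lives purely in the language of fields on $\mathbf{k}$ (with parameters from $\mathbf{k}$) or purely in $\mathcal{L}_{OAG}$ on $\Gamma$ (with parameters from $\Gamma$), and in which no atom mentions variables of both sorts; moreover each $\mathbf{k}$-quantifier of $\phi'$ sits inside a pure-$\mathbf{k}$ subformula and each $\Gamma$-quantifier inside a pure-$\Gamma$ subformula.

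It then remains to observe that a formula of this shape defines a finite union of rectangles with parameters in $\mathbf{k}$ and in $\Gamma$ respectively: put the quantifier-free matrix into disjunctive normal form, split each disjunct as a conjunction of a pure-$\mathbf{k}$ part and a pure-$\Gamma$ part, and push the quantifiers inward — the $\mathbf{k}$- and $\Gamma$-quantifiers can be reordered and distributed over the disjunction precisely because the two kinds never occur together in an atom; equivalently, the structure induced on the two-sorted reduct $\mathbf{k}\sqcup(\Gamma\cup\{\infty\})$, after dropping all symbols involving $K$ and naming the relevant images of $\bar a$, is the disjoint union of $(\mathbf{k},+,\cdot)$ and $(\Gamma\cup\{\infty\},\mathcal{L}_{OAG})$, for which both conclusions are standard. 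This yields the rectangle decomposition required for orthogonality, and stable embeddedness of $\mathbf{k}$ (resp.\ of $\Gamma$) is the special case $\bar z=\emptyset$ (resp.\ $\bar y=\emptyset$). The only step carrying any content is the reduction of the second paragraph — that after removing field quantifiers no field-sort parameter can encode a genuine definable relation between $\mathbf{k}$ and $\Gamma$ — and this is immediate from the signature of $\mathcal{L}_{\ac}$; the remainder is formal manipulation of Boolean combinations and quantifiers, which I would present only briefly.
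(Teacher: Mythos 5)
Your proof is correct and follows the same route the paper intends: the paper asserts the corollary as an immediate consequence of Pas's relative quantifier elimination, and your argument simply supplies the standard details — after removing field quantifiers, the only cross-sort communication is through closed field terms under $v$, $\res$, $\ac$, which denote constants, so the formula lives in the disjoint union of $\mathbf{k}$ and $\Gamma\cup\{\infty\}$ and decomposes into rectangles. Nothing further is needed.
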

 Given $(K, \mathbf{k},\Gamma)$ a valued field we denote as $RV^{\times}$ the multiplicative quotient group $K^{\times}/ (1+\mathcal{M})$ and $\rv:K^{\times} \rightarrow RV^{\times}$ the canonical projection map. By adding a constant $0_{RV}$ we can naturally extend this map sending the element $0$ to $0_{RV}$, so we denote $RV= RV^{\times} \cup\{ 0_{RV}\}$.  For any $a \in \mathcal{O} \backslash \mathcal{M}$ the class $a(1 +\mathcal{M})$ only depends on the coset $a+\mathcal{M}$, so we obtain a group embedding $i:\mathbf{k}^{\times} \rightarrow RV^{\times}$ by sending the element $a +\mathcal{M} \in \mathbf{k}^{\times}$ to $a(1+\mathcal{M}) \in RV^{\times}$. We can also consider the group morphism $v_{\rv}: RV^{\times} \rightarrow \Gamma$ induced by the valuation map $v: K^{\times} \rightarrow \Gamma$, defined as $v_{\rv}(a(1+\mathcal{M}))= v(a)$. In fact, given two elements in the main field sort $a,b \in K$ if  $a(1+\mathcal{M})=b(1+\mathcal{M})$ then $v(a)=v(b)$. Therefore, we have a pure exact sequence:
 \begin{equation*}
 1 \rightarrow \mathbf{k}^{\times} \rightarrow RV^{\times} \rightarrow \Gamma \rightarrow 0,
 \end{equation*}
 which can be naturally extended to a short exact sequence of monoids by adding some constants, i.e. 
 \begin{equation*}
 1 \rightarrow \mathbf{k} \rightarrow RV \rightarrow \Gamma \cup \{\infty\} \rightarrow 0.
 \end{equation*}
 Besides the induced multiplication, $RV$ also inherits a partially defined addition from $K$, via the ternary relation:
 \begin{align*}
 \oplus (a,b,c) \leftrightarrow \exists x,y,z \in K (a=\rv(x) \wedge b=\rv(y) \wedge c=\rv(z) \wedge x+y=z). 
 \end{align*}
We consider the three sorted structure $(\mathbf{k}, RV, \Gamma \cup\{\infty\})$ with the language $\mathcal{L}_{rv}= \mathcal{L}_{r} \cup \mathcal{L}_{g} \cup \{ \cdot_{rv}, i, v_{rv}\}\label{rvok}$, where $\mathcal{L}_{r}$ is a copy of the language of fields for the first sort, $\mathcal{L}_{g}$ is the language of ordered abelian groups extended by a constant $\infty$ i.e. $\{ 0_{g}, +_{g}, -_{g}, <_{g}, \infty\}$, $i$ is a function symbol interpreted as the monoid morphism $i: \mathbf{k} \rightarrow RV$ and $v_{\rv}$ is a function symbol interpreted as the monoid  morphism $v_{\rv}: RV \rightarrow \Gamma$. \\
Building on work of Basarab in \cite{Basarab}, Kuhlmann proved in \cite{Kuhlmann} that any henselian valued field of equicharacteristic zero eliminates field quantifiers relative to the structure $(\mathbf{k},RV, \Gamma)$. We use Flenner as a reference, the following is \cite[Proposition 3.3.1]{Flenner}.
\begin{proposition}\label{relativeRV} [Kulhmann] Let $K$ be a henselian valued field of equicharacteristic zero, the theory of $(K, \mathbf{k}, RV, \Gamma \cup \{\infty\}, \res, v, v_{\rv}, i)$ eliminates field quantifiers. 
\end{proposition}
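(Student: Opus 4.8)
The plan is to run the classical Ax--Kochen--Ershov back-and-forth, carrying the $RV$-data along, in essentially the same way Pas's proof of Theorem~\ref{Pas} carries the angular component; the only real novelty over a naive argument using $(\mathbf{k},\Gamma)$ alone is that the ``missing'' glueing information between residue field and value group is now stored in $RV$ via the pure exact sequence $1\to\mathbf{k}^{\times}\to RV^{\times}\to\Gamma\to 0$, and that is exactly what makes field quantifier elimination possible. Concretely, by the standard criterion for relative quantifier elimination it suffices to prove the following amalgamation statement: if $\mathcal{M},\mathcal{N}$ are henselian valued fields of equicharacteristic zero in the given language, $\mathcal{N}$ is $|\mathcal{M}|^{+}$-saturated, $A=(A_{K},A_{\mathbf{k}},A_{RV},A_{\Gamma})$ is a substructure of $\mathcal{M}$, and $f\colon A\to\mathcal{N}$ is an embedding whose restriction to the $\mathbf{k}$-, $RV$- and $\Gamma$-sorts of $A$ is a partial elementary map, then $f$ extends to an embedding of $\mathcal{M}$ into $\mathcal{N}$. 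On the $\mathbf{k}$-, $RV$- and $\Gamma$-sorts there is nothing to do (use saturation of $\mathcal{N}$ together with elementarity of $f$ there), so the whole content is extending $f$ over one new main-field element at a time.

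First I would make two cheap reductions. Since the valuation extends uniquely to the fraction field, we may assume $A_{K}$ is a field; and since the henselization is an \emph{immediate} extension, unique up to a unique valued-field isomorphism over $A_{K}$, we may replace $A_{K}$ by its henselization inside $M_{K}$ and $f(A_{K})$ by its henselization inside $N_{K}$, thereby assuming $A_{K}$ henselian. A henselian valued field of equicharacteristic zero is defectless and has no proper \emph{finite} immediate extensions, hence is algebraically maximal; I would record this now, because it is what keeps the forthcoming case analysis short and honest.

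Now fix $a\in M_{K}\setminus A_{K}$ and extend $f$ over $a$ by the classical case division for a new element over a valued field. If $a$ is transcendental over $A_{K}$, then either (i) after translating by a suitable $c\in A_{K}$ we have $v(a-c)\notin\Gamma_{A}$, and $A_{K}(a-c)/A_{K}$ is the rigid ``value-group'' extension determined by where $v(a-c)$ sits in $\Gamma$; or (ii) $v(a-c)\in\Gamma_{A}$ for an optimal $c$ while $\res((a-c)/d)$ is transcendental over $k_{A}$ for $d\in A_{K}$ of the right value, giving the rigid ``residue-field'' Gauss extension; or (iii) $\{v(a-c):c\in A_{K}\}$ has no maximum, so $a$ is a pseudo-limit of a pseudo-Cauchy sequence from $A_{K}$ with no pseudo-limit in $A_{K}$ -- and since $A_{K}$ is algebraically maximal this sequence is necessarily of transcendental type, so $A_{K}(a)/A_{K}$ is immediate transcendental and all such extensions are $A_{K}$-isomorphic. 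If instead $a$ is algebraic over $A_{K}$, then $[A_{K}(a):A_{K}]$ is the product of ramification index and residue degree (no defect), and the extension reduces step by step to adjoining a root of an element whose value is not divisible in $\Gamma_{A}$ (a \emph{ramified} step) and to lifting an element of a finite residue extension via Hensel's lemma (an \emph{unramified} step). In each case one produces $a'\in N_{K}$ of the matching kind over $f(A_{K})$ -- by saturation of $\mathcal{N}$ in the immediate case, by henselianity of $N_{K}$ in the algebraic cases -- and checks that $a\mapsto a'$, together with $f$ and the maps it induces on $\mathbf{k},RV,\Gamma$, is an $\mathcal{L}_{rv}$-embedding of the substructure generated by $A$ and $a$. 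Iterating, by a transfinite back-and-forth exhausting $\mathcal{M}$ and $\mathcal{N}$, yields the embedding, hence the relative quantifier elimination.

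The genuine work -- and the reason the statement is about $RV$ rather than $(\mathbf{k},\Gamma)$ -- is the bookkeeping hidden in the phrase ``of the matching kind'': one must show that when $a$ is adjoined, the value $\rv(P(a))$ of every polynomial $P$ over $A_{K}$ evaluated at $a$ is determined, as an element of $RV_{A_{K}(a)}$, by the datum $\rv(a)$ together with $A_{RV}$ and the arithmetic of the exact sequence above, so that elementarity of $f$ on the $RV$-part suffices to pin $a'$ down coherently. In the ramified step this is a computation of how $\rv$ of a root of $\pi\in A_{K}$ relates to $\rv(\pi)$; in the unramified step it is essentially Krasner's lemma plus uniqueness of Hensel lifts; in the immediate case it is the fact that $\rv$ is eventually constant along a pseudo-Cauchy sequence. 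None of this is new -- the argument is Kuhlmann's, building on Basarab -- but assembling the $RV$-compatibility uniformly across all the cases is where care is required, and it is the main obstacle to writing the proof out in full.
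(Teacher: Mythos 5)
The paper does not prove this proposition at all: it is quoted from the literature (Kuhlmann, building on Basarab, with \cite[Proposition 3.3.1]{Flenner} as the stated reference), so there is no in-paper argument to compare yours against. Your outline is the standard Ax--Kochen--Ershov back-and-forth that those sources carry out, and as a roadmap it is essentially correct: reduce to an embedding-extension criterion with $f$ elementary on the auxiliary sorts, pass to henselizations, and extend over one main-field element at a time via the usual case division, using that equicharacteristic zero henselian fields are algebraically maximal.

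Two points keep this from being a proof rather than a plan. First, you explicitly defer the entire content of the theorem: the verification that $\rv(P(a))$, for every polynomial $P$ over $A_{K}$, is determined by $\rv(a)$ together with $A_{RV}$ in each of the four cases is exactly where Basarab's and Kuhlmann's work lives, and naming it (``Krasner plus uniqueness of Hensel lifts'', ``$\rv$ eventually constant along a pseudo-Cauchy sequence'') is not the same as carrying it out; in particular, in the immediate case one must check that the extension is $\rv$-immediate, i.e.\ that $\rv$ of the generated field already lies in $\rv(A_{K})$, which is a computation, not a remark. Second, your transcendental trichotomy is not exhaustive as stated: if $v(a-c)$ attains its maximum in $\Gamma_{A}$ but $\res((a-c)/d)$ is algebraic over $k_{A}$ without lying in $k_{A}$, you are in none of your cases (i)--(iii). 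The standard repair is to first saturate $A_{K}$ under the algebraic steps --- replace it by its relative algebraic closure in $M_{K}$, extending $f$ as you go --- and only then run the trichotomy for transcendental elements; your presentation treats the algebraic case as a parallel alternative rather than a prerequisite, and the argument should be reordered accordingly.
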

The following is a direct consequence of the relative quantifier elimination to $RV$. 
\begin{corollary}\label{seRV}Let $K$ be a henselian valued field of equicharacteristic zero, the structure $(\mathbf{k}, RV, \Gamma \cup \{ \infty\})$ is purely stably embedded. 
\end{corollary}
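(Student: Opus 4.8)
\textbf{Proof plan for Corollary \ref{seRV}.}

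The plan is to deduce stable embeddedness of the structure $(\mathbf{k}, RV, \Gamma \cup \{\infty\})$ directly from the relative quantifier elimination of Proposition \ref{relativeRV}, exactly as Corollary \ref{seracf}, Corollary \ref{seac} were deduced from their respective relative quantifier elimination statements. First I would recall the general principle: if an $\mathcal{L}$-structure $\mathcal{M}$ eliminates quantifiers relative to a collection of sorts $\Sigma$ (in the sense of the definition preceding Theorem \ref{QERACF}), then the substructure on $\Sigma$ is stably embedded. Indeed, after naming all $\mathcal{L}\!\upharpoonright_{\Sigma}$-definable sets with predicates, the theory eliminates quantifiers, so any $\mathcal{L}$-formula $\phi(\bar x, \bar a)$ with $\bar x$ a tuple of variables ranging over the sorts in $\Sigma$ becomes equivalent to a quantifier-free formula in the expanded language; restricting that quantifier-free formula to the sorts of $\Sigma$ and using that the only terms and atomic formulas that can appear involving $\bar x$ are from $\mathcal{L}\!\upharpoonright_{\Sigma}$ (together with the new predicates for $\Sigma$-definable sets), one sees that the defined subset of $\Sigma^{|\bar x|}$ is already defined over parameters in $\Sigma$.

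Concretely, I would take $\Sigma = \{\mathbf{k}, RV, \Gamma \cup \{\infty\}\}$ and $\Pi = \{K\}$, and observe that $\mathcal{L}_{rv}$ as defined on page \pageref{rvok} is exactly $\mathcal{L}\!\upharpoonright_{\Sigma}$ for the full language of Proposition \ref{relativeRV}: the only symbols connecting the field sort $K$ to the rest are $\res$, $v$ (and the field operations on $K$ itself), none of which take arguments or values purely among $\Sigma$ in a way not already captured by $v_{\rv}$, $i$, and the field/group structure on $\mathbf{k}$, $RV$, $\Gamma$. So after naming the $\mathcal{L}_{rv}$-definable sets (without parameters) by predicates, Proposition \ref{relativeRV} says the resulting theory eliminates quantifiers, hence in particular eliminates $\Pi$-quantifiers; then any definable subset of a power of $RV$ (or of $\mathbf{k}$, or of $\Gamma$, or of products thereof) is definable by a quantifier-free formula in the named language, which only mentions parameters from $\Sigma$. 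This is precisely the assertion that $(\mathbf{k}, RV, \Gamma \cup \{\infty\})$ is stably embedded.

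The only mild subtlety — and the step I would be most careful about — is checking that passing from ``eliminates field quantifiers'' (Proposition \ref{relativeRV}) to ``the $\Sigma$-reduct is stably embedded'' does not require more than quantifier-free definability over $\Sigma$-parameters: one must confirm that a quantifier-free $\mathcal{L}$-formula all of whose free variables lie in $\Sigma$, after eliminating the field quantifiers, truly involves no terms of field sort, so that every atomic subformula is either an $\mathcal{L}_{rv}$-atomic formula or one of the adjoined predicates for a $\varnothing$-definable subset of $\Sigma$; this is immediate from inspection of the signature, since $\res$ and $v$ are the only symbols with a $K$-input and the structure has no symbol producing a $K$-element from $\Sigma$-elements. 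Having verified this, the corollary follows with no further computation. I would also remark, as the authors do elsewhere, that the same argument yields orthogonality statements internal to $(\mathbf{k}, \Gamma)$, although that is not asserted here.
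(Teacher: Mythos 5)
Your proposal is correct and follows exactly the route the paper intends: Corollary \ref{seRV} is stated there as a direct consequence of Proposition \ref{relativeRV} with no further argument, and you simply spell out the standard deduction of stable embeddedness from relative quantifier elimination. The one point to phrase more carefully is that a field-quantifier-free formula may still contain field-sort terms built from the \emph{parameters}; these enter atomic formulas only through $\res$, $v$ or $\rv$, so they evaluate to elements of $(\mathbf{k}, RV, \Gamma \cup \{\infty\})$ and can be absorbed into the new parameter tuple, which is what the stable embeddedness claim requires.
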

Kulhmann's statement reduces the elimination of field quantifiers to the structure $(\mathbf{k},RV, \Gamma \cup \{\infty\})$. For certain classes of henselian valued fields of equicharacteristic zero the structure $(\mathbf{k}, RV, \Gamma \cup \{ \infty\})$ eliminates $RV$ quantifiers in the language $\mathcal{L}_{\rv}$. 
\begin{proposition}\label{RVRACF} Let $K$ be a henselian valued field of equicharacteristic zero with residue field algebraically closed, the structure $(\mathbf{k}, RV, \Gamma \cup \{\infty\})$ eliminates quantifiers relative to the value group in the language $\mathcal{L}_{\rv}$.
\end{proposition}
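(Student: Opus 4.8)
The plan is to verify the standard back-and-forth criterion for the relative quantifier elimination, in the sense of the definition above. Let $\mathcal{L}_{\rv}^{+}$ be $\mathcal{L}_{\rv}$ together with a fresh predicate naming each $\emptyset$-definable subset of a power of $\Gamma$ in the language $\mathcal{L}_{g}$, and let $T$ be the $\mathcal{L}_{\rv}^{+}$-theory of $(\mathbf{k},RV,\Gamma\cup\{\infty\})$. It suffices to show: whenever $\mathcal{M},\mathcal{N}\models T$ are sufficiently saturated, $A\subseteq\mathcal{M}$ and $A'\subseteq\mathcal{N}$ are small substructures and $f\colon A\to A'$ is an $\mathcal{L}_{\rv}^{+}$-isomorphism, then for each $c\in\mathcal{M}$ the map $f$ extends to an isomorphism having $c$ in its domain. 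Such an $f$ preserves all the named predicates, so its restriction to the $\Gamma$-sort is elementary for $\mathcal{L}_{g}$; and, enlarging $A,A'$ in the harmless forced way, we may assume they are closed under division in the $RV$-sort and under $i^{-1}$ on $i(\mathbf{k})\cap RV$, so that $\mathbf{k}_{A}:=A\cap\mathbf{k}$ is a subfield and $A\cap RV^{\times}$ a subgroup.

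The first step is to record the algebraic inputs. Since $\mathbf{k}$ is algebraically closed, $\operatorname{Th}(\mathbf{k})$ is a completion of $\mathrm{ACF}$ and hence has quantifier elimination, and $\mathbf{k}^{\times}$ is divisible; therefore the pure exact sequence $1\to\mathbf{k}^{\times}\xrightarrow{i}RV^{\times}\xrightarrow{v_{\rv}}\Gamma\to 0$ has divisible middle term and $\ker v_{\rv}=i(\mathbf{k}^{\times})$, so $i(\mathbf{k})$ is quantifier-free $\mathcal{L}_{\rv}$-definable by $v_{\rv}(x)=0\lor x=0_{RV}$. Moreover the partial addition $\oplus$ of the $RV$-sort is completely controlled by the other structure: for $a,b\in RV^{\times}$ with $v_{\rv}(a)\neq v_{\rv}(b)$, $\oplus(a,b,c)$ holds iff $c$ is whichever of $a,b$ has the smaller value; and if $v_{\rv}(a)=v_{\rv}(b)$ then $ab^{-1}=i(\rho)$ for a unique $\rho\in\mathbf{k}^{\times}$, and $\oplus(a,b,c)$ holds iff either $\rho\neq-1$ and $cb^{-1}=i(\rho+1)$, or $\rho=-1$ and $v_{\rv}(c)>v_{\rv}(a)$ (with the evident conventions when an argument is $0_{RV}$). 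Thus, once the group structure, $i$, $v_{\rv}$ and the field $\mathbf{k}$ are fixed on a substructure, every instance of $\oplus$ there is already determined.

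The extension step is by cases on the sort of $c$. If $c\in\Gamma$, use saturation of $\mathcal{N}$ to choose $c'\in\Gamma(\mathcal{N})$ realising $f_{*}\tp(c/\Gamma\cap A)$; since the $\Gamma$-structure is fully named this $c'$ works and nothing is added to the other sorts. If $c\in\mathbf{k}$, use quantifier elimination for $\mathrm{ACF}$ and saturation to find $c'\in\mathbf{k}(\mathcal{N})$ realising the $f$-image of $\tp^{\mathrm{ACF}}(c/\mathbf{k}_{A})$, yielding a field embedding $\mathbf{k}_{A}(c)\hookrightarrow\mathbf{k}(\mathcal{N})$ extending $f\upharpoonright_{\mathbf{k}_{A}}$, and extend $f$ by this together with its forced values on the new $RV$-elements $i(g)\cdot r$ ($g\in\mathbf{k}_{A}(c)^{\times}$, $r\in A\cap RV$). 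Compatibility with $\cdot_{\rv},i,v_{\rv}$ is immediate, and compatibility with $\oplus$ follows from the description above, each instance reducing to arithmetic in $\mathbf{k}_{A}(c)$ or to the order of $\Gamma$. Finally, if $c\in RV^{\times}$, put $\delta:=v_{\rv}(c)$. If $\delta\in v_{\rv}(A\cap RV)$, choose $r\in A\cap RV$ with $v_{\rv}(r)=\delta$; then $cr^{-1}=i(\alpha)$ for some $\alpha\in\mathbf{k}(\mathcal{M})$, and after first adjoining $\alpha$ as in the previous case, $c=i(\alpha)\cdot r$ lies in the domain. If $\delta\notin v_{\rv}(A\cap RV)$, first extend $f$ over $\langle\Gamma\cap A,\delta\rangle$ as in the $\Gamma$-case, then realise in $RV^{\times}(\mathcal{N})$ the quantifier-free group type of $c$ over $A\cap RV$ lying above the chosen image of $\delta$; this is possible because $RV^{\times}(\mathcal{N})$ is divisible, $v_{\rv}$ is onto $\Gamma(\mathcal{N})$ and $\mathcal{N}$ is saturated, the only constraints being finitely many equations $c^{n}=i(\mu_{n})\cdot r_{n}$ for the $n$ with $n\delta\in v_{\rv}(A\cap RV)$, which are accommodated by simultaneously extending $f$ on the $\mathbf{k}$-sort by the (possibly new) residue elements $\mu_{n}$.

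The genuinely delicate point is this interaction in the last case: adjoining an $RV$-element of new value can force both new residue-field elements and new torsion of the value group over $\Gamma\cap A$, so the residue-field and value-group extensions must be performed coherently, and one must check throughout that the partially defined $\oplus$ imposes no constraint beyond those already arranged --- which is exactly where divisibility of $\mathbf{k}^{\times}$ (equivalently, of $RV^{\times}$) and quantifier elimination for $\mathrm{ACF}$ are used together. Everything else is routine bookkeeping with $\mathcal{L}_{\rv}$-substructures.
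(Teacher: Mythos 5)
Your proposal is exactly the argument the paper has in mind: its entire proof of this proposition is the sentence ``this follows by a standard back and forth argument using that $k^{\times}$ is divisible,'' and you have correctly filled in that back-and-forth, including the two genuinely load-bearing observations (that $\oplus$ is determined by the group structure, $i$, $v_{\rv}$ and the field structure on $\mathbf{k}$, and that the only constraints on a new $RV$-element of new value are the finitely many power relations landing in $i(\mathbf{k}^{\times})$).

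One assertion should be corrected, though it does not break the argument. You claim $RV^{\times}(\mathcal{N})$ is divisible and that this is ``equivalent'' to divisibility of $\mathbf{k}^{\times}$; neither is right in general, since $RV^{\times}/i(\mathbf{k}^{\times})\cong\Gamma$ and $\Gamma$ is an arbitrary ordered abelian group (e.g.\ $\mathbb{Z}$). What the last case actually needs is only divisibility of the kernel $i(\mathbf{k}^{\times})$: letting $n_{0}$ be minimal with $n_{0}\delta\in v_{\rv}(A\cap RV)$, all the constraints $c^{n}=i(\mu_{n})r_{n}$ are consequences of the one for $n_{0}$, and given any $d\in RV^{\times}(\mathcal{N})$ with $v_{\rv}(d)=\delta'$ one has $d^{n_{0}}=i(\nu)\,i(f(\mu_{n_{0}}))f(r_{n_{0}})$ for some $\nu\in\mathbf{k}^{\times}$, so $c'=d\cdot i(\eta)$ with $\eta^{n_{0}}=\nu^{-1}$ works. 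With that substitution your proof is complete and coincides with the paper's intended one.
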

\begin{proof}
This follows by a standard back and forth argument using that $k^{\times}$ is divisible. 
\end{proof}

The elimination of $RV$ quantifiers in the more general setting was later obtained by Aschenbrenner, Chernikov, Gehret and Ziegler in \cite{distal}.  They extend the language adding a new sort for each $n \in \mathbb{N}$ denoted as $\mathbf{k}^{\times}/(\mathbf{k}^{\times})^{n}$ which is an abelian group and we extend it adding an element $\infty$ such that for each $a \in \mathbf{k}^{\times}/(\mathbf{k}^{\times})^{n}$, $a \cdot \infty=\infty$. For each $n \in \mathbb{N}$ we denote this sort as $\mathcal{A}_{n}$, and we refer to the multi-sorted structure $\mathcal{A}=(\mathcal{A}_{n} \ | \ n \in \mathbb{N})$ as the \emph{power residue sorts}. We add surjective maps $\pi_{n}: \mathbf{k}^{\times} \rightarrow \mathbf{k}^{\times}/ (\mathbf{k}^{\times})^{n}$, which can be naturally extended to a monoid morphism $\pi_{n}: \mathbf{k} \rightarrow \mathcal{A}_{n}$. We add maps $\rho_{n}: RV \rightarrow \mathcal{A}_{n}$, interpreted as $\rho_{n}(0)= \infty$, over $v_{rv}^{-1}(n\Gamma)$, we define $\rho_{n}$ as the composition of the group morphisms:
\begin{align*}
 v_{rv}^{-1}(n\Gamma) \subseteq RV  \rightarrow RV^{n} \cdot i(\mathbf{k}^{\times}) \rightarrow (RV^{n} \cdot i(\mathbf{k}^{\times}))/ RV^{n} \cong \mathbf{k}^{\times}/(RV^{n} \cap \mathbf{k}^{\times}) \cong \mathbf{k}^{\times}/ (\mathbf{k}^{\times})^{n}, 
\end{align*}
and the map is equal to zero outside of $v_{rv}^{-1}(n\Gamma)$. Let $\displaystyle{\mathcal{L}_{rvqe}=\mathcal{L}_{rv} \cup \{ \rho_{n}, \pi_{n} \ | \ n \in \mathbb{N}\}}$. \\
The following is \cite[Remark 4.4]{distal}.
\begin{corollary} \label{RVQE}The structure $(\mathcal{A}, RV, \Gamma \cup \{ \infty\}, \{\pi_{n} ,\rho_{n} \ | \ n \in \mathbb{N}\})$ eliminates $RV$ quantifiers. In particular, $\mathcal{A}$ and $\Gamma \cup \{\infty \}$ are purely stably embedded and are orthogonal to each other. 
\end{corollary}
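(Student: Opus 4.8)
The plan is to take the $RV$-quantifier elimination itself straight from \cite[Corollary 4.8]{distal}, and then to deduce the stable embeddedness and orthogonality of $\mathcal{A}$ and $\Gamma$ by a routine relative quantifier-elimination argument built on top of it. For the first sentence there is essentially nothing to do: the sorts $\mathcal{A}_{n}$, the maps $\pi_{n},\rho_{n}$, and the language $\mathcal{L}_{rvqe}$ as set up above are a relabelling of the ones used in \cite{distal}, the hypothesis (henselian, equicharacteristic zero) is the same, so one only checks that our bookkeeping matches theirs.

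For the ``in particular'' I would work in a model $M=(\mathcal{A},RV,\Gamma\cup\{\infty\})$ of the theory --- recall that $\mathbf{k}$ is among the residue sorts $\mathcal{A}$ --- and show that the structure $M$ induces on the union of the residue sorts and the value group is the disjoint union of the $\mathcal{A}$-structure and the $\Gamma$-structure; this yields both stable embeddedness and orthogonality at once. So I would take a formula $\phi(\bar x,\bar y,\bar c)$ with $\bar x$ from $\mathcal{A}$, $\bar y$ from $\Gamma$, and parameters $\bar c=\bar c_{\mathcal{A}}\bar c_{RV}\bar c_{\Gamma}$, and use the $RV$-quantifier elimination to assume $\phi$ has no quantifiers over the $RV$-sort (quantifiers over $\mathcal{A}$ and $\Gamma$ may remain). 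Every $RV$-term in $\phi$ then has its $RV$-variables among $\bar c_{RV}$, hence is a $\cdot_{rv}$-product of entries of $\bar c_{RV}$, of elements of $i(\mathbf{k})$, and of $0_{RV}$. Using $v_{\rv}\circ i=0$, $\rho_{n}\circ i=\pi_{n}$, and the homomorphism properties of $v_{\rv},\rho_{n}$, one sees that $v_{\rv}$ of such a term is a $\mathbb{Z}$-combination of the coordinates of $v_{\rv}(\bar c_{RV})$, that $\rho_{n}$ of it is a power of $\rho_{n}(\bar c_{RV})$ times the $\pi_{n}$-image of an element of $\mathbf{k}$, and that $\oplus$ applied to such terms is a field condition on the underlying $\mathbf{k}$-elements. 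Consequently, after naming the parameters $v_{\rv}(\bar c_{RV})\in\Gamma$ and $\rho_{n}(\bar c_{RV})\in\mathcal{A}_{n}$, every atomic subformula of $\phi$ becomes either an atomic formula in the residue sorts (free variables among $\bar x$, parameters in $\mathcal{A}$) or an atomic formula in $\Gamma$ (free variables among $\bar y$, parameters in $\Gamma$).

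It then remains to kill the surviving $\mathcal{A}$- and $\Gamma$-quantifiers. I would put the matrix of $\phi$ into disjunctive normal form in these ``pure'' atomic formulas; a quantifier over a residue sort commutes past the $\Gamma$-pure conjuncts (which do not mention it) and produces again a residue-sort formula, and symmetrically for $\Gamma$-quantifiers, so by induction on quantifier rank $\phi(\bar x,\bar y,\bar c)$ is equivalent to a finite disjunction $\bigvee_{i}\bigl(\theta_{i}(\bar x)\wedge\eta_{i}(\bar y)\bigr)$ with each $\theta_{i}$ over parameters from $\mathcal{A}$ and each $\eta_{i}$ over parameters from $\Gamma$. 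Taking $\bar y$ (resp. $\bar x$) empty gives stable embeddedness of $\mathcal{A}$ (resp. of $\Gamma\cup\{\infty\}$), and rewriting a Boolean combination of rectangles as a finite union of rectangles gives orthogonality in the sense of the definition above; combined with Proposition \ref{relativeRV}, the same conclusion then transfers to the valued field itself.

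The hard part, to the extent there is one, is the atomic bookkeeping of the second paragraph: once the $RV$-quantifiers are gone one must make sure that no atomic formula genuinely couples an $\mathcal{A}$-variable with a $\Gamma$-variable, and the two places where this could fail are the embedded copy $i(\mathbf{k})\subseteq RV$ and the partial addition $\oplus$. Both are neutralized by the compatibilities $\rho_{n}\circ i=\pi_{n}$ and $v_{\rv}\circ i=0$ and by the fact that $\oplus$ restricted to $i(\mathbf{k})^{3}$ is just addition in $\mathbf{k}$. After that the quantifier-pushing is entirely formal, and the stable embeddedness and orthogonality statements drop out.
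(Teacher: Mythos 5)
Your proposal is correct and matches the paper, which simply cites \cite[Corollary 4.8]{distal} for the quantifier elimination and treats the stable embeddedness and orthogonality of $\mathcal{A}$ and $\Gamma\cup\{\infty\}$ as an immediate consequence. The atomic bookkeeping you carry out (using $v_{\rv}\circ i$ constant off $0$, $\rho_{n}\circ i=\pi_{n}$, and the reduction of $\oplus$ on a single fiber to affine conditions in $\mathbf{k}$) is exactly the standard argument the paper leaves implicit, and it is sound.
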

Combining Proposition \ref{relativeRV} and Corollary \ref{RVQE} Aschenbrenner, Chernikov, Gehret and Ziegler obtained as well a field quantifier elimination for henselian valued fields of equicharacteristic zero relative to the power residue sorts and the value group in the following language:
\begin{definition}\label{3}[The Language $\mathcal{L}$] Consider the expansion of $\mathcal{L}_{\val}$ obtained by adding the power residue sorts $\mathcal{A}=(\mathcal{A}_{n} \ | \ n \in \mathbb{N})$. We also add the surjective maps $\pi_{n}:\mathbf{k} \rightarrow \mathcal{A}_{n}$, and we interpret them as the described. For each $n \in \mathbb{N}$ we add a map $\res^{n}: K \rightarrow \mathcal{A}_{n}$ interpreted in the following way: if $v(a) \notin n\Gamma$ set $\res^{n}(a)=0$. Otherwise, let $b$ be any element of $K$ with $nv(b)=v(a)$ and set $\res^{n}(a)= \pi_{n} \big( \res\big( \frac{a}{b^{n}}\big)\big)$. We denote this expansion of $\mathcal{L}_{\val}$ by $\mathcal{L}$. 
\end{definition}
Note that for each $a \in K$, $\res^{n}(a)=\rho_{n}(rv(a))$. The following is a direct consequence of \cite[Theorem 5.15]{distal}.
\begin{theorem} \label{QE} A henselian valued field of equicharacteristic zero eliminates field quantifiers in the language $\mathcal{L}$. 
\end{theorem}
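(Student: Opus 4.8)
The plan is to combine the two relative quantifier-elimination results already available: Kuhlmann's Proposition~\ref{relativeRV}, which eliminates field quantifiers in favour of the $RV$-structure, and Corollary~\ref{RVQE}, which eliminates $RV$-quantifiers in favour of the residue sorts $\mathcal{A}$ and the value group. The only genuine work is the bookkeeping needed to see that, after both reductions, what is left can be rewritten inside $\mathcal{L}$ itself, since $\mathcal{L}$ has neither the sort $RV$ nor the partial addition on it; concretely, one must check that the $RV$-structure, pulled back along $\rv$ to terms in the field sort, is already quantifier-free definable using only $v$, $\res$ and the maps $\res^{n}$.

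First I would record the standard observation that $RV = K^{\times}/(1+\M)$, together with $\rv$, $v_{\rv}$, $i$, $\rho_{n}$ and the ternary addition relation $\oplus$, is $\mathcal{L}$-interpretable on the field sort: $\rv$ is the quotient map, $v_{\rv}(\rv(a))=v(a)$, $\rho_{n}(\rv(a))=\res^{n}(a)$ by the remark following Definition~\ref{3}, $i$ is the canonical embedding $\mathbf{k}^{\times}\hookrightarrow RV^{\times}$, and $\oplus$ is given verbatim by its defining formula. Hence it suffices to eliminate field quantifiers in the expansion $\mathcal{L}^{+}$ of $\mathcal{L}$ by this $RV$-structure, provided one also verifies at the end that every $\mathcal{L}^{+}$-formula without field quantifiers in which each field variable occurs only inside $\rv(\cdot)$ translates back into $\mathcal{L}$.

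Now fix an $\mathcal{L}^{+}$-formula $\phi(\bar x,\bar y)$ with $\bar x$ in the field sort. By Proposition~\ref{relativeRV} (whose relative field quantifier elimination is unaffected by the extra sorts $\mathcal{A}_{n}$ and the maps $\pi_{n},\rho_{n}$, which go out of $\mathbf{k}$ and $RV$ only), $\phi$ is equivalent modulo the theory to a Boolean combination of quantifier-free field conditions $p(\bar x)=0$ and of formulas over $(\mathbf{k},RV,\Gamma,\mathcal{A})$ applied to terms $\rv(p_{j}(\bar x))$ and $\bar y$, with no quantifier over the field sort; writing $p(\bar x)=0$ as $\rv(p(\bar x))=0_{RV}$ this has the shape $\Psi(\rv(p_{1}(\bar x)),\dots,\rv(p_{m}(\bar x)),\bar y)$ with $\Psi$ a formula of the structure $(\mathcal{A},RV,\Gamma\cup\{\infty\},\{\pi_{n},\rho_{n}\})$ together with $\mathbf{k}$ carrying its field language, in free variables $\bar u$ of sort $RV$ and $\bar y$. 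Applying Corollary~\ref{RVQE}, $\Psi$ is equivalent to $\Psi'(\bar u,\bar y)$ with no $RV$-quantifier, so $\Psi'$ is a Boolean combination of atomic $\mathcal{A}$- and $\Gamma$-formulas (possibly with $\mathcal{A}$- and $\Gamma$-quantifiers, which are harmless) and of atomic $RV$-formulas built from $\bar u$, from terms $i(\eta)$ with $\eta$ a $\mathbf{k}$-term, and from the symbols $\cdot_{\rv}$, $v_{\rv}$, $\rho_{n}$, $\oplus$.

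Finally I would substitute $u_{j}:=\rv(p_{j}(\bar x))$ and rewrite each atomic $RV$-subformula in $\mathcal{L}$: products collapse, $\rv(p)\cdot_{\rv}\rv(q)=\rv(pq)$, so every $RV$-term reduces to a single $\rv(p(\bar x))$; then $v_{\rv}(\rv(p))=v(p)$ and $\rho_{n}(\rv(p))=\res^{n}(p)$; the equality $\rv(p)=\rv(q)$ is $(p=0\wedge q=0)\vee(p\neq 0\wedge q\neq 0\wedge v(p-q)>v(q))$; the equality $\rv(p)=i(\eta)$ is $(\eta=0\wedge p=0)\vee(\eta\neq 0\wedge v(p)=0\wedge\res(p)=\eta)$, using that $i$ is injective with image $v_{\rv}^{-1}(0)$ and $i(\res(p))=\rv(p)$ on units; and $\oplus(\rv(p),\rv(q),\rv(r))$ unwinds, after a case split on whether $v(p)=v(q)$ and on whether there is cancellation in $p+q$, into an explicit quantifier-free $\mathcal{L}_{\val}$-condition relating $v(p),v(q),v(r)$ and $\rv(p+q)$. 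The outcome is an $\mathcal{L}$-formula with no field-sort quantifier equivalent to $\phi$. I expect the translation of the partial addition $\oplus$, and secondarily the interplay between the embedding $i$ and the map $\res$, to be the only delicate point; everything else is a direct appeal to Proposition~\ref{relativeRV} and Corollary~\ref{RVQE}.
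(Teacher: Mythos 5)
Your proof is correct and takes essentially the same route as the paper, which gives no argument of its own but states the theorem as a direct consequence of \cite[Theorem 5.15]{distal}, explicitly describing that result as obtained by combining Proposition \ref{relativeRV} with Corollary \ref{RVQE} — precisely your two reductions. Your back-translation of the residual $RV$-atoms (products, $v_{\rv}$, $\rho_{n}$, equality of $\rv$'s, the embedding $i$, and the case analysis for $\oplus$) into quantifier-free $\mathcal{L}$-conditions correctly supplies the bookkeeping the paper leaves implicit.
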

The following statement is an immediate consequence of \cite[Theorem 5.15]{distal}.
\begin{corollary}\label{stL} Let $K$ be a henselian valued field of equicharacteristic zero seen as a $\mathcal{L}$-structure. The power residue sorts and the value group are purely stably embedded and are orthogonal to each other. 
\end{corollary}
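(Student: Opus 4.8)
The plan is to deduce both assertions directly from the field quantifier elimination of Theorem \ref{QE}. Work in a monster model $\mathfrak{C}$ and let $D$ be the union of the residue field $\mathbf{k}$, the residue sorts $\mathcal{A} = (\mathcal{A}_{n} \mid n \in \mathbb{N})$ and the value group sort $\Gamma \cup \{\infty\}$. Two structural features of $\mathcal{L}$ drive the argument. First, the only function symbols of $\mathcal{L}$ whose domain is not one of the sorts of $D$ are the field operations on $K$ together with $v \colon K \to \Gamma \cup \{\infty\}$, $\res \colon K \to \mathbf{k}$ and $\res^{n} \colon K \to \mathcal{A}_{n}$, and $\mathcal{L}$ has no relation symbols at all relating $\mathbf{k}$ or the $\mathcal{A}_{n}$ with $\Gamma \cup \{\infty\}$. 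Second, the surjections $\pi_{n} \colon \mathbf{k} \to \mathcal{A}_{n}$ and the field structure on $\mathbf{k}$ stay inside $D$.

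First I would establish stable embeddedness. Let $\phi(x;a)$ define a subset of $D^{m}$, with $x$ a tuple of variables from sorts of $D$ and $a$ a tuple of parameters from $\mathfrak{C}$. By Theorem \ref{QE} we may assume $\phi$ has no quantifiers ranging over the sort $K$. Since every free variable of $\phi$ lies in $D$ and there are no $K$-quantifiers, each subterm of $\phi$ of sort $K$ is built solely from the $K$-components of $a$, hence denotes a fixed element of $K$; applying $v$, $\res$ or $\res^{n}$ to it then denotes a fixed element of $D$. Replacing every maximal $K$-subterm occurring inside a $v$-, $\res$- or $\res^{n}$-term by the element of $D$ it denotes yields a formula $\psi(x;b)$ equivalent to $\phi(x;a)$ with all parameters $b$ in $D$ (the parameters of $\phi$ not of sort $K$ were already in $D$). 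This proves $D$ is stably embedded, and in particular that the residue sorts and the value group are each stably embedded.

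Next I would prove orthogonality of the residue sorts and $\Gamma \cup \{\infty\}$. Given $\phi(x, y; a)$ with $x$ from the residue sorts, $y$ from $\Gamma \cup \{\infty\}$ and $a$ a parameter tuple, the previous reduction lets me assume $\phi$ is field-quantifier-free with $a \in D$. Restricted to the sorts of $D$, no term or relation symbol of $\mathcal{L}$ connects $\mathbf{k}$ or the $\mathcal{A}_{n}$ with $\Gamma \cup \{\infty\}$, so every atomic subformula of $\phi$ mentions either only residue-sort data or only value-group data. Putting $\phi$ in prenex form — all quantifiers now ranging over sorts of $D$ — and its matrix in disjunctive normal form, each disjunct is a conjunction of a pure residue-sort formula and a pure value-group formula in the free and quantified variables; distributing the residue-sort quantifiers over the residue conjunct and the value-group quantifiers over the value-group conjunct (legitimate since residue-sort quantifiers do not touch value-group atoms and conversely) rewrites $\phi(\mathcal{A}^{n}, (\Gamma \cup \{\infty\})^{r}; a)$ as a finite union $\bigcup_{i<k} \theta_{i}(\mathcal{A}^{n}; a_{i}) \times \eta_{i}((\Gamma \cup \{\infty\})^{r}; a_{i}')$ with parameters in $D$, which is exactly orthogonality. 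Alternatively, after the reduction to parameters in $D$ the induced structure on $\mathcal{A} \cup (\Gamma \cup \{\infty\})$ is the one of Corollary \ref{RVQE}, where orthogonality is already recorded.

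The substance here is wholly contained in Theorem \ref{QE}. The hard part, such as it is, is the bookkeeping in the first step: verifying that in a field-quantifier-free formula whose free variables lie in $D$ every occurrence of a $K$-term is parameter-only, so it can be absorbed into a $D$-parameter through $v$, $\res$ or $\res^{n}$, and that this persists under quantification over sorts of $D$, which it does because such quantifiers never introduce a free $K$-variable. I do not expect any genuine obstacle beyond this.
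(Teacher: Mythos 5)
Your proposal is correct and follows exactly the route the paper intends: the paper offers no proof, asserting the corollary as an immediate consequence of Theorem \ref{QE}, and your argument supplies the standard details (absorbing closed $K$-terms into $D$-parameters via $v$, $\res$, $\res^{n}$ for stable embeddedness, and the absence of cross-sort symbols for orthogonality). The only glossed point is that distributing quantifiers over a DNF matrix requires the usual refinement into Boolean atoms of the $\eta_{i}$'s (naive distribution fails for universal quantifiers over a disjunction), but this is routine and does not affect correctness.
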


\begin{definition}[The $\mathcal{L}_{RV}$-language] We write $\mathcal{L}_{RV}$ to denote the extension of the language $\mathcal{L}$ where we add as well a sort for the monoid $RV$ where we equip the exact sequence $(\mathbf{k}, RV, \Gamma \cup \{ \infty\})$ with the language $\mathcal{L}_{rvqe}$. 
\end{definition}
\subsubsection{Some remarks in ordered abelian groups}
In $1984$ Gurevich and Schmitt \cite{NIPOAG} showed that every ordered abelian group is $NIP$. In \cite{schmitt}, Schmitt investigated deeply the model completeness of theories of ordered abelian groups and obtained a (\emph{relative}) quantifier elimination to the spines, whose description can be found in  \cite[Section 2]{NIPOAG}.  Later, Cluckers and Halupczok in \cite[Definition 1.5]{CluckersImmi} introduced a language $\mathcal{L}_{CH}$-extending $\mathcal{L}_{OAG}=\{ +,-, 0, <\}$ and obtained a $(relative)$ quantifier elimination to the \emph{auxiliary sorts}, whose definition can be found in \cite[Section 1.2]{CluckersImmi}. The language $\mathcal{L}_{CH}$ has been more often used by the model theory community as it is more in line with Shelah's imaginary expansion. The following is \cite[Corollary 1.10]{CluckersImmi}.\\
\begin{corollary}\label{linear}Let $G$ be an ordered abelian group, for any function $f: G^{n} \rightarrow G$ which is $\mathcal{L}_{OAG}$-definable with parameters from a set $B$, there exists a partition of $G^{n}$ into finitely many $B$-definable sets and for each such set $A$, $f$ is linear. This is, there are finitely many elements $r_{1},\dots, r_{n},s \in \mathbb{Z}$ with $s \neq 0$ and $b \in \dcl(B)$ such that for any $\mathcal{a} \in A$ we have $\displaystyle{f(a_{1},\dots, a_{n})= \frac{1}{s} \big(\sum_{i \leq n} r_{i} a_{i}+b \big)}$. 
\end{corollary}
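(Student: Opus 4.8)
The plan is to argue type-by-type and then patch via compactness. It suffices to show: for every complete type $p(x)$ over $B$ in the home sort, with $x=(x_{1},\dots,x_{n})$, there are $r_{1},\dots,r_{n},s\in\mathbb{Z}$ with $s\neq 0$ and $b\in\dcl(B)$ such that $p(x)\vdash sf(x)=\sum_{i\le n}r_{i}x_{i}+b$. Granting this, each such linear identity holds on a $B$-definable set belonging to $p$; the family of these sets covers $G^{n}$, so by compactness of the type space finitely many of them suffice, and refining this finite family to a partition yields the statement, since on each piece $f$ agrees with the linear function attached to one of the selected types.

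So fix $p$, realize it by $a=(a_{1},\dots,a_{n})$ in a sufficiently saturated model, and set $c:=f(a)$. Since $f$ is $B$-definable, $c\in\dcl(Ba)$, so there is an $\mathcal{L}_{OAG}(Ba)$-formula defining the singleton $\{c\}$ in the home sort. The idea is now to pass to the Cluckers--Halupczok language $\mathcal{L}_{CH}$ and invoke the relative quantifier elimination to the auxiliary sorts: we may take this formula to be quantifier-free, hence a Boolean combination of atomic formulas which are either home-sort conditions on $\mathbb{Z}$-linear terms in $y$ over $Ba$ (equalities $sy=\tau(a)$, inequalities, or congruences $sy\equiv\tau(a)\ (\mathrm{mod}\ m)$) or conditions in the auxiliary sorts whose home-sort content enters only through terms of the form $\alpha(sy-\tau(a))$ for maps $\alpha$ to the auxiliary quotients.

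The key point is that a quantifier-free $\mathcal{L}_{CH}$-formula in the single home-sort variable $y$, with the remaining data as parameters, that defines a single point must, on $\tp(a/B)$, be equivalent to an honest equation $sy=\tau(a)$ with $s\neq 0$: an auxiliary-sort condition only sees $y$ modulo a convex subgroup and a congruence condition only sees $y$ modulo $m$, so neither these nor their Boolean combinations with strict inequalities can isolate $c$, forcing an equality to appear. Finally, a home-sort $\mathcal{L}_{CH}$-term over $B$ evaluated at $a$ is $\mathbb{Q}$-affine in $a$, that is $\tau(a)=\sum_{i\le n}r_{i}a_{i}+b$ with $r_{i}\in\mathbb{Z}$ and $b\in\dcl(B)$ — equivalently, home-sort elements definable over $Ba$ lie in the $\mathbb{Q}$-affine span of $a$ over $\dcl(B)$, which is precisely what the relative quantifier elimination delivers. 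Hence $sc=\sum_{i\le n}r_{i}a_{i}+b$; since the relation $sf(x)=\sum_{i\le n}r_{i}x_{i}+b$ is over $B$ and holds at $a$, it lies in $p$, as required.

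I expect the main obstacle to be precisely the content of the last paragraph: extracting from the quantifier-free $\mathcal{L}_{CH}$-description of $\{c\}$ both that a genuine linear equation $sy=\tau(a)$ must occur and that its right-hand side is $\mathbb{Q}$-affine with constant term in $\dcl(B)$. This requires unwinding the exact syntax of $\mathcal{L}_{CH}$-terms and the interaction between the auxiliary sorts, the congruence predicates and the order relation; by contrast, the reduction to a single type and the compactness patching are routine.
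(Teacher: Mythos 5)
The paper does not actually prove this statement: it is imported verbatim as \cite[Corollary 1.10]{CluckersImmi}, so there is no internal proof to compare yours against, and I can only assess your argument on its own terms.

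Your outer scaffolding is correct: reducing to complete types over $B$, extracting for each type $p$ a formula $\theta_{p}\in p$ implying the linear identity, covering $S_{n}(B)$ by compactness, refining to a partition, and the closing observation that $sf(x)=\sum_{i\le n}r_{i}x_{i}+b$ is a $B$-formula holding at $a$ and hence belongs to $p$. The gap is the middle step, and that step is the entire content of the result. Concretely: (i) your claim that a normal-form description of the singleton $\{c\}$ must contain an honest equality $sy=\tau(a)$ because congruences, auxiliary-sort conditions and ``Boolean combinations with strict inequalities cannot isolate $c$'' is false in any ordered abelian group with a discrete quotient --- already in $\mathbb{Z}$ the formula $\tau(a)<y\wedge y<\tau(a)+2$ isolates a point with no equality among the given terms. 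This particular case is repairable (the isolated point is $\tau(a)+ke$ with $e$ the minimal positive element, which lies in $\dcl(\emptyset)$, so it is still affine over $\dcl(B)$), but in a general ordered abelian group the interplay between convex subgroups with discrete quotients, congruence conditions modulo $mG+G_{\alpha}$, and the auxiliary-sort quantifiers that survive the relative quantifier elimination (the resulting formula is not literally quantifier-free, only free of home-sort quantifiers) makes this case analysis genuinely delicate; it is precisely what Cluckers--Halupczok's proof does, and you have not carried it out. (ii) Your final assertion that ``home-sort elements definable over $Ba$ lie in the $\mathbb{Q}$-affine span of $a$ over $\dcl(B)$'' is ``precisely what the relative quantifier elimination delivers'' begs the question: this is Fact \ref{dcllinear} of the paper, which is there \emph{derived from} Corollary \ref{linear}, and quantifier elimination does not yield it without the very singleton analysis you defer. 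You flag this step yourself as the main obstacle; since it is not bridged, and the one concrete mechanism you propose for it is wrong in the discrete case, the proof is incomplete at its crucial point.
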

Let $G$ be an ordered abelian group, we extend the language $\mathcal{L}_{CH}$ by adding a set of constants $\mathcal{C}$ to name each element of $\dcl(\emptyset)$, and we denote this extension as $\mathcal{L}_{CH}^{*}$.  An immediate consequence is the following fact. 
\begin{fact}\label{dcllinear} Let $G$ be an ordered abelian group and $B \subseteq G$. Then $\dcl(B)=\big( \mathbb{Q} \otimes B \big) \cap G$. 
\end{fact}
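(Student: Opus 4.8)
\textbf{Proof plan for Fact \ref{dcllinear}.}

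The plan is to prove the two inclusions of $\dcl(B) = (\mathbb{Q}\otimes B)\cap G$ separately, where throughout we work in the language $\mathcal{L}_{CH}^*$ so that every element of $\dcl(\emptyset)$ is named by a constant. For the inclusion $(\mathbb{Q}\otimes B)\cap G \subseteq \dcl(B)$, suppose $g\in G$ lies in $\mathbb{Q}\otimes B$, so that $s\cdot g = \sum_{i\le m} r_i b_i$ for some $b_i\in B$, integers $r_i$, and nonzero $s\in\mathbb{Z}$. Then $g$ is the unique element $x$ of $G$ satisfying $s x = \sum_i r_i b_i$ (uniqueness because ordered abelian groups are torsion-free), and this is a formula over $B$; hence $g\in\dcl(B)$. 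Note this direction does not even need the constants $\mathcal{C}$.

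For the reverse inclusion $\dcl(B)\subseteq (\mathbb{Q}\otimes B)\cap G$, let $g\in\dcl(B)$. Then there is an $\mathcal{L}_{CH}^*$-formula $\phi(x, \bar b)$ with $\bar b$ a tuple from $B$ such that $g$ is the unique realization of $\phi(x,\bar b)$; equivalently, there is an $\mathcal{L}_{CH}^*$-definable partial function $f$ with $g = f(\bar b)$. Since the constants in $\mathcal{C}$ name elements of $\dcl_{\mathcal{L}_{CH}}(\emptyset)$, which are themselves in $\mathbb{Q}\otimes\emptyset$ by the easy direction applied over $\emptyset$, it suffices to handle $\mathcal{L}_{CH}$-definable functions with parameters in $B$ together with finitely many parameters from $\dcl(\emptyset)$, i.e. $\mathcal{L}_{OAG}$-definable functions in the sense of Corollary \ref{linear} with parameter set $B$ (after absorbing the $\dcl(\emptyset)$-constants). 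Here I would invoke Corollary \ref{linear}: the function $f$ agrees, on a $B$-definable piece containing $\bar b$, with a linear function $\bar y \mapsto \frac1s\big(\sum_i r_i y_i + b\big)$ for integers $r_i, s$ with $s\neq 0$ and some $b\in\dcl(B)$. A subtlety is that the additive parameter $b$ produced by Corollary \ref{linear} again lies only in $\dcl(B)$, not obviously in $\mathbb{Q}\otimes B$, so one must argue that $b$ itself is a $\mathbb{Z}$-linear combination of $B$ and $\dcl(\emptyset)$-constants over a suitable rational denominator; this is handled by applying Corollary \ref{linear} to the $0$-variable case (a $B$-definable element) and using that $\dcl(\emptyset)$-elements are $\mathbb{Q}$-multiples of $0$, hence $0$, so in fact $b\in\mathbb{Q}\otimes B$. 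Combining, $g = f(\bar b) = \frac1s(\sum_i r_i b_i + b)\in\mathbb{Q}\otimes B$, and since $g\in G$ we get $g\in(\mathbb{Q}\otimes B)\cap G$.

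The main obstacle I anticipate is the bookkeeping around the additive constant $b$ in the linearity statement of Corollary \ref{linear}: one needs that definable elements over the empty set (in the relevant reduct/expansion) contribute nothing outside $\mathbb{Q}\otimes B$, which ultimately rests on the fact that in a torsion-free ordered abelian group $\dcl_{\mathcal{L}_{OAG}}(\emptyset) = \{0\}$ and more generally $\dcl_{\mathcal{L}_{CH}}(\emptyset)$ is a torsion-free divisible-by-nothing group naming fixed elements, each of which is forced to be $0$ in the $\mathcal{L}_{OAG}$-reduct after clearing denominators. Once that point is settled, everything else is the routine unwinding of "definable element = value of a definable function at parameters from $B$" together with the torsion-freeness needed for the uniqueness arguments.
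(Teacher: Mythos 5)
Your overall route is the one the paper intends (it offers no proof, calling the statement an immediate consequence of Corollary \ref{linear}): torsion-freeness gives $(\mathbb{Q}\otimes B)\cap G\subseteq\dcl(B)$, and the piecewise-linearity of definable functions gives the converse. The easy direction is fine. The problem is in your treatment of the additive constant in the hard direction: your resolution rests on the claim that $\dcl_{\mathcal{L}_{OAG}}(\emptyset)$ is trivial (``$\mathbb{Q}$-multiples of $0$, hence $0$''), and that claim is false in general. In a model of Presburger arithmetic the least positive element, and hence all of $\mathbb{Z}$, lies in $\dcl_{\mathcal{L}_{OAG}}(\emptyset)$; more generally any ordered abelian group with definable convex subgroups and definable coset representatives has a large $\dcl(\emptyset)$. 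Since the paper's main examples (e.g.\ $\mathbb{C}((t))$, with value group $\mathbb{Z}$) are exactly of this kind, the step cannot be repaired by restricting attention to ``nice'' groups. This is also why the paper goes to the trouble of introducing $\mathcal{L}_{CH}^{*}$, $\mathcal{L}_{\val}^{*}$, etc.: the constants $\mathcal{C}$ (respectively $\Sigma$) are there precisely because $\dcl(\emptyset)$ need not vanish, and the Fact is only used in situations where $\dcl(\emptyset)\cap\Gamma$ has been absorbed into the base (e.g.\ $\Gamma_{C}\supseteq\dcl(\emptyset)$).

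A second, smaller point: even setting aside $\dcl(\emptyset)$, your bookkeeping is circular in a way your proposed fix does not escape. You apply Corollary \ref{linear} with parameter set $B$, which returns a constant $b\in\dcl(B)$ --- the very set you are trying to describe --- and applying the corollary again to that constant makes no progress. The clean way out is to regard the witnessing function as $\emptyset$-definable (its arguments are exactly the tuple $\bar b$ from $B$) and apply Corollary \ref{linear} over $\emptyset$: then $g=\frac{1}{s}\bigl(\sum_{i}r_{i}b_{i}+c\bigr)$ with $c\in\dcl(\emptyset)$, so $g\in\mathbb{Q}\otimes\bigl(B\cup\dcl(\emptyset)\bigr)$. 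This is the correct general statement; it reduces to the one in the paper exactly when $\dcl(\emptyset)\subseteq\mathbb{Q}\otimes B$, which is the convention under which the Fact is invoked.
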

This fact will play a fundamental role to weaken the necessary hypothesis to obtain domination results for henselian valued fields of equicharacteristic zero. We will denote as $\mathcal{L}_{\val}^{*}$ and $\mathcal{L}^{*}_{\ac}$ the extension of the language obtain by adding a set of constants to the main field  $\Sigma=\{ t_{d} \ | \ d \in \dcl(\emptyset) \cap \Gamma\}$ such that $v(t_{d})=d$ for each element $d \in \dcl(\emptyset) \cap \Gamma$. \\
The following is \cite[Proposition 5.1]{dpminimal}.
\begin{proposition} An ordered abelian group is dp-minimal if and only if for every prime number $p$, $[\Gamma: p\Gamma]$ is finite. 
\end{proposition}
\subsection{Independence notions}
 Trough this paper we will use several notions of independence. We begin by recalling a few basic properties of forking. 
\begin{definition} A formula $\phi(x,b)$ \emph{divides} over $C$ if there is a sequence $(b_{i})_{i<\omega}$ in $\tp(b/C)$ with $b=b_{0}$ such that $\{ \phi(x,b_{i}) \ | \ i <\omega\}$ is $m$-inconsistent. We say that $\phi(x,b)$ \emph{forks} if $\displaystyle{\phi(x,b) \vdash \bigvee_{i \leq k} \psi_{i}(x,b_{i})}$ where each formula $\psi_{i}(x,b_{i})$ divides over $C$. We say that $\tp(a/Cb)$ forks (respectively divides) over $C$ if some formula in the type forks (or divides) over $C$. We write  $a \ind_{C}b$ if the type $\tp(a/Cb)$ does not fork over $C$, and write $a \ind_{C}^{d}b$ to indicate that the type $\tp(a/Cb)$ divides over $C$.
\end{definition}
In many theories the relation of forking independence have been completely characterized. For example, in the theory of algebraically closed fields, forking independence coincides with algebraic independence. Let $C,E$ and $F$ be fields, and suppose that $C \subseteq E \cap F$ we will write $E \ind_{C}^{alg} F$ to indicate that $E$ and $F$ are algebraically independent over $C$. 
\subsubsection{Forking independence in abelian groups}
In \cite{Mike}, the model theory of modules is extensively studied. We will be interested in applying some of the results in \cite{Mike} to the reduct of the value group to the language of groups $\mathcal{L}_{AG}=\{ +, -, 0\}$. It is well known that modules are stable, and every abelian group is a $\mathbb{Z}$-module. We recall some of the necessary notions to characterize forking independence in abelian groups. Throughout this section we consider the $\mathcal{L}_{AG}$ first order theory of some torsion free group and we denote as $\mathfrak{G}$ its monster model.\\
We recall some of the well known facts about stable theories. 
 \begin{fact} \label{nonforkingheir} Let $T$ be a complete first order theory and assume that $T$ is stable and $M \vDash T$. Let $p \in S_{n}(M)$ then $p$ is stationary. Furthermore, for any set of parameters $M \subseteq A$ and $q \in S_{n}(A)$ such that $p \subseteq q$ the following conditions are equivalent:
 \begin{enumerate}
\item  $q$ is a non-forking extension of $p$,
\item $q$ is a heir extension of $p$ (i.e. every formula represented in $q$ is also represented in $p$),
\item  $q$ is a co-heir extension of $p$ (i.e. for every formula $\phi(\mathbf{x},\mathbf{a})\in q$ is finitely satisfiable in $M$, this is there is some $\mathbf{m} \subseteq M$ such that $\vDash \phi(\mathbf{m}, \mathbf{a})$.)
\end{enumerate}
 \end{fact}
\begin{definition}[\emph{p.p. formula}] A $p.p.$ formula $\phi(\mathbf{v})$ is an $\mathcal{L}_{AG}$ formula of the form 
\begin{align*}
\exists w_{1},\dots, w_{l} \big( \bigwedge_{j=1}^{k} \sum_{i=1}^{n} r_{j_{i}} v_{i}+ \sum_{i=1}^{l}s_{j_{i}} w_{i}=0\big),
\end{align*}
 where $s_{j_{i}}$, $r_{j_{i}} \in \mathbb{Z}$, and $\mathbf{v}= (v_{1},\dots,v_{n})$ is a tuple of variables. 
\end{definition}
 Given a $p.p.$ formula if we replace the last $(n-i)$- variables  by a tuple of parameters $\bar{a}=(a_{i},\dots,a_{n})$, the formula $\phi(v_{1},\dots,v_{i-1}, \bar{a})$ defines a coset of $\phi(v_{1},\dots,v_{i-1}, \bar{0})$, which defines a subgroup of $\mathfrak{G}^{i}$. 
 \begin{definition}[$p.p.$-type] Let $\mathbf{c}$ be a tuple and $A$ some set of a parameters, the $p.p.$ type of $\mathbf{c}$ over $A$ is the set of $p.p.$ $\mathcal{L}_{AG}(A)$-formulas that $\mathbf{c}$ satisfies. This is:
 \begin{align*}
 \pp(\mathbf{c}/A)=\{\phi(\mathbf{v}, \mathbf{a})\ | \ \text{ $\phi(\mathbf{v}, \mathbf{a})$ is a $\mathcal{L}_{AG}(A)$ p.p. formula and $\vDash \phi(\mathbf{c}, \mathbf{a})$} \}.
 \end{align*}
 \end{definition}
 If $p$ is a $p.p.$-type over $A$ and $\phi(\mathbf{v}, \mathbf{y})$ is an $\mathcal{L}_{AG}$-formula, then we say that it is \emph{represented} in $p$ if there is some tuple $\mathbf{a} \subseteq A$ such that $\phi(\mathbf{v}, \mathbf{a}) \in p$. We consider the type definable group 
 \begin{equation*}
 G(p)=\{ \phi(\mathbf{v}, \bar{0}) \ | \ \phi(\mathbf{v}, \mathbf{y})\ \text{is represented in $p\}$}.
 \end{equation*}
It is well known that in stable theories to characterize the non-forking extensions of $p$ the group $G(p)$ would not be the right invariant to consider, but instead one might be more interested in its connected component $\displaystyle{G^{0}(p)= \bigcap_{H \in F} H}$,  where 
\begin{equation*}
\mathcal{F}=\{ H \ | \ H \ \text{is a subgroup of some} \ G \in G(p) \ \text{and} \ [G: H] \ \text{is finite} \}.
\end{equation*}
The following is \cite[Theorem 5.3]{Mike}.
 \begin{theorem} Let $p$ be a type and suppose that $q$ is any extension of $p$. Then $q$ is a non-forking extension of $p$ if and only if $G^{0}(p)=G^{0}(q)$. In particular, for any type $p$ if $G(p)=G^{0}(p)$ then $p$ is stationary. 
 \end{theorem}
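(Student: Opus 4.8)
The plan is to reduce the statement to the lattice of (type-)definable subgroups of $\mathfrak{G}$, exploiting the two special features of modules: positive-primitive elimination and the fact that p.p.-formulas define cosets of subgroups. By the p.p.-elimination theorem for modules (Baur--Monk), every $\mathcal{L}_{AG}$-formula is, modulo the theory, a Boolean combination of p.p.-formulas, and therefore the complete type of a tuple over a parameter set is determined by its p.p.-type, provided we work over a base that is pure in $\mathfrak{G}$; so after replacing the base sets by their pure closures we may argue entirely with p.p.-types. Under this reduction a p.p.-type $r$ over a set $C$ is exactly the datum of the coset $c+D_r$, where $c\models r$ and $D_r:=\bigcap G(r)$ is the type-definable subgroup cut out by the subgroups represented in $r$: any two realizations of $r$ lie in every coset $\phi(\mathfrak{G},\mathbf{a})$ with $\phi(\mathbf v,\mathbf a)\in r$ and hence differ by an element of $D_r$, while conversely $c+D_r$ consists entirely of realizations. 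Stability of modules lets us interchange ``forks'' and ``divides'' throughout; conceptually, $G^{0}(r)$ plays the role of the connected stabilizer of $r$ in the sense of stable group theory.

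First I would record the trivial half: if $q\supseteq p$ then every p.p.-formula represented in $p$ is represented in $q$ (its parameters still lie in the base of $q$), so $G(p)\subseteq G(q)$ and $\mathcal{F}_p\subseteq\mathcal{F}_q$ as families of subgroups, whence $G^{0}(q)=\bigcap\mathcal{F}_q\subseteq\bigcap\mathcal{F}_p=G^{0}(p)$ and likewise $D_q\subseteq D_p$, with no hypothesis on $q$. The content is that non-forking forces equality. For the forward implication, suppose $q$ is a non-forking extension of $p$ to $B$ but $G^{0}(q)\subsetneq G^{0}(p)$; unwinding the definition of $G^{0}$ one obtains a p.p.-formula $\psi(\mathbf v,\mathbf b)$ in $q$ (with $\mathbf b$ from $B$) whose solution set is a coset of a subgroup $H:=\psi(\mathfrak{G},\bar 0)$ meeting some $G\in G(p)$ in infinite index. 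Then $\psi(\mathbf v,\mathbf b)$ divides over $A$: for an $A$-indiscernible sequence $(\mathbf b_i)_{i<\omega}$ in $\tp(\mathbf b/A)$ the cosets $\psi(\mathfrak{G},\mathbf b_i)$ are translates of $H$ inside a fixed $A$-type-definable coset that splits into infinitely many $H$-cosets, so they can be arranged to make $\{\psi(\mathbf v,\mathbf b_i):i<\omega\}$ finitely inconsistent, contradicting non-forking. Conversely, a dividing p.p.-condition witnessed in $q$ produces, by the same index bookkeeping run in reverse, a member of $\mathcal{F}_q$ that does not contain $\bigcap\mathcal{F}_p$, i.e. $G^{0}(q)\subsetneq G^{0}(p)$. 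This establishes the equivalence.

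For the final assertion, assume $G(p)=G^{0}(p)$, i.e. $D_p=\bigcap G(p)$ equals its own connected component $G^{0}(p)$ and so is connected. Let $q_1,q_2$ be non-forking extensions of $p$ to a common set $B$. By the equivalence just proved, $G^{0}(q_i)=G^{0}(p)=D_p$; combined with the chain $D_p=G^{0}(q_i)\subseteq\bigcap G(q_i)=D_{q_i}\subseteq D_p$ this gives $D_{q_1}=D_{q_2}=D_p$. Now the realization set of $q_i$ is a single coset of $D_{q_i}=D_p$ and is contained in the realization set of $p$, which is a single coset of $D_p$; two cosets of the same group, one inside the other, must coincide, so $q_1$ and $q_2$ have the same set of realizations in $\mathfrak{G}$ and hence are the same complete type. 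Thus $p$ is stationary.

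I expect the technical heart to be the second paragraph: the two-directional translation between ``$\psi(\mathbf v,\mathbf b)$ divides over $A$'' and ``the pertinent subgroup index is infinite'', where one must (i) actually construct the finitely-inconsistent family of coset formulas from an infinite index, using that an $A$-indiscernible sequence of parameters moves the corresponding cosets within a fixed ambient coset, and (ii) make sure the passage from types to p.p.-types is legitimate for the base sets at hand, which is cleanest once $A$ and $B$ have been replaced by pure submodules. Checking that $\bigcap\mathcal{F}_p$ is genuinely connected, and that finite-index subgroups of p.p.-definable groups are themselves definable (so that $\mathcal{F}_p$ really sits inside the definable-subgroup lattice), are the standard module-theoretic points that also need to be verified.
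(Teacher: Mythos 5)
The paper does not actually prove this statement: it is imported verbatim from Prest (\cite[Theorem 5.3]{Mike}) and used as a black box, so there is no in-paper proof to compare against. Your reconstruction follows the standard module-theoretic route (Baur--Monk p.p.-elimination, p.p.-types as cosets of type-definable subgroups, $G^{0}$ as a connected stabilizer), and the ``only if'' direction and the stationarity argument are essentially right: the passage from $G^{0}(q)\subsetneq G^{0}(p)$ to a dividing p.p.-formula via infinitely many pairwise disjoint cosets of $G\cap H$ inside a fixed coset of $G$ is the correct mechanism, even though ``can be arranged to make $\{\psi(\mathbf v,\mathbf b_i)\}$ finitely inconsistent'' elides the actual construction (one must first produce infinitely many $A$-conjugates of $\mathbf b$ giving \emph{distinct} cosets of $G\cap H$ --- which uses that $G\cap H$ has infinite index and is not almost $A$-definable --- and then extract an indiscernible sequence).

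The genuine gap is in the converse. You write that ``a dividing p.p.-condition witnessed in $q$'' yields $G^{0}(q)\subsetneq G^{0}(p)$, but the direction you need is: if $G^{0}(q)=G^{0}(p)$ then $q$ does not fork, i.e.\ \emph{no} formula of $q$ forks over the base. Forking could a priori be witnessed by an arbitrary Boolean combination of p.p.-formulas (in particular by negations of p.p.-formulas), and nothing in your sketch rules this out; showing that the p.p.-part controls forking is precisely the content of this direction in Prest's proof. The standard repair is a boundedness argument: the extensions of $p$ to any set with connected component equal to $G^{0}(p)$ form a single orbit of boundedly many types (bounded by the index $[\bigcap G(p):G^{0}(p)]$ together with the finitely many possible negative parts over $\acl^{\mathrm{eq}}$ of the base), hence none of them divides, and since non-forking extensions exist and (by your forward direction) preserve $G^{0}$, the two classes coincide. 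Without some version of this step the biconditional is not established. A secondary point to tighten: in the definition of $\mathcal F$ the subgroups $H$ must be taken p.p.-definable of finite index (not arbitrary finite-index subgroups, and it is not true that every finite-index subgroup of a definable group is definable); and the reduction of complete types to p.p.-types is only valid over models (or after passing to $\acl^{\mathrm{eq}}$ of the base in the stationarity argument), which you flag but do not carry out.
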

This statement allows us to characterize forking independence for arbitrary set of parameters.
 \begin{corollary}Let $A,B,C \subseteq \mathfrak{G}$, then $B \ind_{A} C$ if and only if for every $p.p$-formula $\psi(\mathbf{x}, \mathbf{y}, \mathbf{z})$ and tuples $\mathbf{b} \subseteq B, \mathbf{c} \subseteq C$ and $\mathbf{a} \subseteq A$ such that $\vDash \psi(\mathbf{b},\mathbf{c},\mathbf{a})$, there is some $p.p.$ formula $\phi(\mathbf{x},\mathbf{w})$ and a tuple $\mathbf{a}' \subseteq A$ such that $\vDash \phi(\mathbf{b},\mathbf{a}')$ and $[\phi(\mathbf{x},\bar{0}): \phi(\mathbf{x}, \bar{0}) \wedge \psi(\mathbf{x}, \bar{0},\bar{0})] \ \text{is finite.}$. 
 \end{corollary}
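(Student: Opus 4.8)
The plan is to read this corollary off from \cite[Theorem 5.3]{Mike} by translating equality of connected components into the displayed p.p.\ condition. Throughout, write $\mathcal{F}_{p}$ for the family of all finite-index subgroups of groups of the form $\phi(\mathfrak{G},\bar{0})$ with $\phi$ a p.p.\ formula represented in $p$, so that $G^{0}(p)=\bigcap\mathcal{F}_{p}$. First I would reduce to finite tuples: forking has finite character in $B$ and in $C$, and a single p.p.\ formula involves only finitely many parameter slots, so it suffices to prove, for each finite $\mathbf{b}\subseteq B$ and $\mathbf{c}\subseteq C$, that $\mathbf{b}\ind_{A}\mathbf{c}$ holds if and only if the displayed condition holds for all finite subtuples of $\mathbf{b}$, $\mathbf{c}$ and of $A$. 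Fixing $\mathbf{b}$ and $\mathbf{c}$, put $p=\tp(\mathbf{b}/A)$ and $q=\tp(\mathbf{b}/A\mathbf{c})$; since $q$ extends $p$ we have $\mathcal{F}_{p}\subseteq\mathcal{F}_{q}$ and hence $G^{0}(q)\subseteq G^{0}(p)$ automatically, so by \cite[Theorem 5.3]{Mike} the statement amounts to: $G^{0}(p)\subseteq G^{0}(q)$ if and only if the displayed condition holds.

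For the forward implication, assume $\mathbf{b}\ind_{A}\mathbf{c}$, so $G^{0}(p)=G^{0}(q)$, and let $\psi(\mathbf{x},\mathbf{y},\mathbf{z})$, $\mathbf{b}$, $\mathbf{c}\subseteq C$, $\mathbf{a}\subseteq A$ be as in the statement with $\models\psi(\mathbf{b},\mathbf{c},\mathbf{a})$. Then $\psi(\mathbf{x},\mathbf{c},\mathbf{a})$ is a p.p.\ formula over $A\mathbf{c}$ satisfied by $\mathbf{b}$, so $\psi(\mathfrak{G},\bar{0},\bar{0})$ itself belongs to $\mathcal{F}_{q}$ and therefore $G^{0}(p)=G^{0}(q)\subseteq\psi(\mathfrak{G},\bar{0},\bar{0})$. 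The family $\mathcal{F}_{p}$ is directed under finite intersection — p.p.\ formulas are closed under conjunction, and intersecting two finite-index subgroups of p.p.\ groups again yields a finite-index subgroup of the (p.p.) intersection — so, as $\psi(\mathfrak{G},\bar{0},\bar{0})$ is definable, compactness produces a single $H_{0}\in\mathcal{F}_{p}$ with $H_{0}\subseteq\psi(\mathfrak{G},\bar{0},\bar{0})$. Choosing a p.p.\ $\phi(\mathbf{x},\mathbf{w})$ represented in $p$ via some $\mathbf{a}'\subseteq A$ (so $\models\phi(\mathbf{b},\mathbf{a}')$) with $H_{0}$ of finite index in $\phi(\mathfrak{G},\bar{0})$, we get $\phi(\mathfrak{G},\bar{0})\cap\psi(\mathfrak{G},\bar{0},\bar{0})\supseteq H_{0}$, hence $[\phi(\mathbf{x},\bar{0}):\phi(\mathbf{x},\bar{0})\wedge\psi(\mathbf{x},\bar{0},\bar{0})]$ is finite, which is exactly the required conclusion.

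For the converse, assume the displayed condition and fix an arbitrary $H\in\mathcal{F}_{q}$: say $H$ has finite index in $\chi(\mathfrak{G},\bar{0})$ for some p.p.\ $\chi$ with $\chi(\mathbf{x},\mathbf{d})\in q$, $\models\chi(\mathbf{b},\mathbf{d})$, and $\mathbf{d}\subseteq A\mathbf{c}$ finite. Splitting $\mathbf{d}$ into a part from $\mathbf{c}$ and a part from $A$ and relabelling the parameter slots, we view $\chi$ as $\chi(\mathbf{x},\mathbf{y},\mathbf{z})$ with $\models\chi(\mathbf{b},\mathbf{c}',\mathbf{a})$, $\mathbf{c}'\subseteq C$, $\mathbf{a}\subseteq A$; applying the hypothesis produces a p.p.\ $\phi$ represented in $p$ (parameters in $A$) such that $\phi(\mathfrak{G},\bar{0})\cap\chi(\mathfrak{G},\bar{0},\bar{0})$ has finite index in $\phi(\mathfrak{G},\bar{0})$. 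Intersecting once more with $H$, the group $\phi(\mathfrak{G},\bar{0})\cap\chi(\mathfrak{G},\bar{0},\bar{0})\cap H$ still has finite index in $\phi(\mathfrak{G},\bar{0})$ (since $H$ has finite index in $\chi(\mathfrak{G},\bar{0},\bar{0})$), so it lies in $\mathcal{F}_{p}$, giving $G^{0}(p)\subseteq H$. As $H$ was arbitrary, $G^{0}(p)\subseteq G^{0}(q)$, hence $G^{0}(p)=G^{0}(q)$ and $\mathbf{b}\ind_{A}\mathbf{c}$ by \cite[Theorem 5.3]{Mike}.

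I expect the only real obstacle to be bookkeeping rather than conceptual content: keeping straight that "represented in $p$" means "with parameters from $A$", using the definition of $\mathcal{F}$ with its arbitrary (not necessarily p.p.) finite-index subgroups precisely at the points where this is needed, and justifying the passage from the type-definable $G^{0}(p)$ to a single p.p.\ formula in the forward direction as a genuine compactness argument — which is why establishing that $\mathcal{F}_{p}$ is directed, via closure of p.p.\ formulas under conjunction, is the one step worth spelling out carefully.
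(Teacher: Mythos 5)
The paper states this corollary without proof, as an immediate consequence of \cite[Theorem 5.3]{Mike}, so there is no argument of the author's to compare against; your write-up supplies the standard derivation and it is correct. Both directions are sound: the automatic inclusion $G^{0}(q)\subseteq G^{0}(p)$ from $\mathcal{F}_{p}\subseteq\mathcal{F}_{q}$, the use of directedness of $\mathcal{F}_{p}$ (via closure of p.p.\ formulas under conjunction) plus compactness to extract a single $H_{0}\subseteq\psi(\mathfrak{G},\bar{0},\bar{0})$, and the index computation $[\phi(\mathbf{x},\bar{0}):\phi(\mathbf{x},\bar{0})\wedge\psi(\mathbf{x},\bar{0},\bar{0})]\leq[\phi(\mathbf{x},\bar{0}):H_{0}]<\infty$ all check out, as does the converse via intersecting with an arbitrary $H\in\mathcal{F}_{q}$. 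The one point worth making explicit is that your compactness step requires each member of $\mathcal{F}_{p}$ to be definable: the paper's definition of $\mathcal{F}$ literally allows arbitrary finite-index subgroups, under which reading $G^{0}(p)$ need not be type-definable and the compactness extraction would fail. You are implicitly (and correctly) using the convention of \cite{Mike} that the finite-index subgroups in question are p.p.-definable; a sentence recording that convention would close the only loose end.
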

 
 We conclude this subsection by introducing the notion of independence that we will be using for the value group. Let $(\Gamma,+,-,\leq, 0)$ be a non-trivial ordered abelian group. Let $T$ be its complete $\mathcal{L}_{OAG}$-theory and $\mathfrak{G}$ be its monster model. We let $\mathfrak{G}\upharpoonright_{\mathcal{L}_{AG}}$ be its reduct to the language of abelian group, this is purely a torsion free abelian group. 
 \begin{definition}  Let $A, B, C \subseteq \mathfrak{G}$, then $A \ind_{C}^{s} B$ if and only if $tp\upharpoonright_{\mathcal{L}_{AG}}(A/BC)$ does not fork over $C$ if and only if $A \ind_{C} B$ in the stable structure $\mathfrak{G}\upharpoonright_{\mathcal{L}_{AG}}$. 
  \end{definition}
 We will use the following fact repeatedly.
 \begin{fact}\label{intersection} Let $A$ and $B$ be subgroup of $\mathfrak{G}$ and let $C\subseteq A \cap B$ be a common subgroup. If $A \ind_{C}^{s} B$, then $A \cap B \subseteq \dcl(C)$. 
\end{fact}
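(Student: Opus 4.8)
The plan is to reduce to a one‑element statement and then feed the hypothesis into the positive‑primitive description of non‑forking recalled above. Since any element that is definable over $C$ in the reduct $\mathfrak{G}\upharpoonright_{\mathcal{L}_{AG}}$ is a fortiori definable over $C$ in the ambient ordered structure, it suffices to show that every $d\in A\cap B$ lies in $\dcl(C)$ already in $\mathfrak{G}\upharpoonright_{\mathcal{L}_{AG}}$.

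So I would fix $d\in A\cap B$. By definition, $A\ind^{s}_{C}B$ is just $A\ind_{C}B$ in the stable structure $\mathfrak{G}\upharpoonright_{\mathcal{L}_{AG}}$, so I would apply the $p.p.$-formula criterion for non‑forking (the corollary following the discussion of $G^{0}(p)$) with $C$ in the role of the base, with $d$ placed in the variable slot as an element of $A$ and the same $d$ placed in the ``other side'' slot as an element of $B$ — this is exactly where $d\in A\cap B$ is used. Choosing the atomic $p.p.$ formula $\psi(x,y,z)\equiv (x-y=0)$, for which $\models\psi(d,d,\bar 0)$ trivially holds, the criterion produces a $p.p.$ formula $\phi(x,\mathbf{w})$ and a tuple $\mathbf{c}\subseteq C$ with $\models\phi(d,\mathbf{c})$ and with the index $[\,\phi(x,\bar 0):\phi(x,\bar 0)\wedge\psi(x,\bar 0,\bar 0)\,]$ finite.

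Next I would unwind that index. Here $\psi(x,\bar 0,\bar 0)$ defines $\{x:x=0\}$, while $\phi(x,\bar 0)$ defines a subgroup of $\mathfrak{G}$ (as recorded in the paragraph on $p.p.$ formulas), so finiteness of the index says that $\phi(x,\bar 0)$ is a finite subgroup of $\mathfrak{G}$. But $\mathfrak{G}\upharpoonright_{\mathcal{L}_{AG}}$ is torsion free, hence $\phi(x,\bar 0)=\{0\}$. Consequently $\phi(x,\mathbf{c})$, being a coset of $\phi(x,\bar 0)$ and containing $d$, defines the single point $d$; thus $d$ is the unique realization of an $\mathcal{L}_{AG}(C)$-formula and $d\in\dcl(C)$. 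Letting $d$ range over $A\cap B$ gives $A\cap B\subseteq\dcl(C)$.

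None of this is deep, and I would not expect a genuine obstacle; the one point worth care is extracting $\dcl$ rather than merely $\acl$. The shorter ``soft'' route — noting $d\ind^{s}_{C}d$ and concluding $d\in\acl(C)$ — would still leave one to describe algebraic closure in torsion free abelian groups, whereas passing through the explicit witness $\phi$ shows directly that the controlling subgroup $\phi(x,\bar 0)$ is finite, hence trivial, pinning $d$ down as a true definable point. The only bookkeeping is to invoke the corollary with the right permutation of its three parameter sets, which is harmless since forking independence is symmetric.
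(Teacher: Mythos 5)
Your proof is correct, but it follows a genuinely different route from the paper's. The paper argues softly: if some $a\in (A\cap B)\setminus\acl(C)$ existed, then the instance $x=a$ of the stable formula $x=y$ belongs to $\tp(A/CB)$ and divides over $C$ (witnessed by a non-constant indiscernible sequence in $\tp(a/C)$, which is $2$-inconsistent), contradicting $A\ind_{C}^{s}B$; this gives $A\cap B\subseteq\acl(C)$, and the definable linear order on $\mathfrak{G}$ is then invoked to upgrade algebraic closure to definable closure. You instead feed the hypothesis directly into the $p.p.$-formula criterion for non-forking with $\psi(x,y,z)\equiv(x-y=0)$, and use torsion-freeness to force the controlling subgroup $\phi(x,\bar 0)$ to be trivial, so that $\phi(x,\mathbf{c})$ pins $d$ down as a single point. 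What your approach buys is a stronger conclusion obtained more uniformly: you land in $\dcl$ of the pure $\mathcal{L}_{AG}$-reduct without ever appealing to the order, whereas the paper's argument needs the order (or some other device) to pass from $\acl$ to $\dcl$ and is, strictly speaking, only giving $\acl$ in the reduct. What the paper's approach buys is brevity: it uses nothing about modules beyond the stability of equality. Both arguments are sound, and the reduction to a single element $d\in A\cap B$, the choice of which slot of the criterion receives which copy of $d$, and the coset bookkeeping in your write-up are all handled correctly.
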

\begin{proof}
This follows by forking independence for stable formulas, including $x=y$. If $a \in A \cap B \backslash \acl(C)$, then the formula $x=a \in \tp(A/B)$ and divides over $C$, because we can find an infinite non constant indiscernible sequence $(a_{i})_{i=0}^{\infty}$ in $\tp(a/C)$ and $\{ x=a_{i}\}_{i=0}^{\infty}$ is $2$-inconsistent.  We conclude that $a$ must be algebraic over $C$, and as there is a definable order in the home sort $a \in \dcl(C)$.
\end{proof}

\section{Domination by the power residue sorts and the value group}\label{dominationvalres}
In this section we study domination of the type of a valued fields by the power residue sorts and the value group in each of the languages. We would like to highlight that the required ingredients to carry out the argument are the existence of separated basis and a relative quantifier elimination statement. We obtain the following results:
\begin{enumerate}
\item In the $\mathcal{L}$-language the type of a valued field over a maximal model is dominated by the power residue sorts and the value group.
\item In the $\mathcal{L}_{\ac}$-language the type of a valued field over a maximal field is dominated by the residue sort and the value group.
\item For the theory of henselian valued fields of equicharacteristic zero with residue field algebraically closed, the type of a valued field over a maximal field is dominated by the residue sort and the value group in the $\mathcal{L}_{\val}^{*}$-language.
\end{enumerate}

The following is \cite[Lemma 2.5]{Haskell}.
\begin{proposition}\label{linearlydisjoint} Let $L$ and $M$ be valued fields with $C \subseteq L \cap M$ be a common valued subfield. Assume that:
\begin{enumerate}
    \item $\Gamma_{L} \cap \Gamma_{M}=\Gamma_{C}$, 
    \item $k_{M}$ and $k_{L}$ are linearly disjoint over $k_{C}$,
    \item $M$ (or $L$) have the good separated basis property over $C$.
\end{enumerate}
Then $M$  (or $L$) has the separated basis property over $L$ (or $M$ respectively). Therefore, $L$ and $M$ are linearly disjoint over $C$, $\Gamma_{C(L,M)}$ is the group generated by $\Gamma_{L}$ and $\Gamma_{M}$ over $\Gamma_{C}$ and $k_{C(L,M)}$ is the field generated by $k_{M}$ and $k_{L}$ over $k_{C}$. 
\end{proposition}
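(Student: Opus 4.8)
The plan is to isolate a single claim from which everything follows: \emph{under hypotheses (1)--(3), any good separated basis $\{n_1,\dots,n_r\}$ of a finite-dimensional $C$-subspace of $M$ is separated over $L$ and good over $L$, hence an $L$-basis of $\Vect_L(n_1,\dots,n_r)$.} Granting this, to see that $M$ has the good separated basis property over $L$ it suffices, given a finite tuple $\bar m$ from $M$, to apply hypothesis (3) to $\Vect_C(\bar m)$: the resulting good separated basis $\{n_1,\dots,n_r\}$ over $C$ satisfies $\Vect_L(n_1,\dots,n_r)=\Vect_L(\bar m)$, and the claim promotes it to a good separated basis of $\Vect_L(\bar m)$ over $L$ ($L$-linear independence being automatic from separation).

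I would prove the claim in two halves. For \emph{goodness over $L$}: if $v(n_i)\neq v(n_j)$ but $v(n_i)-v(n_j)\in\Gamma_L$, then since also $v(n_i)-v(n_j)\in\Gamma_M$, hypothesis (1) gives $v(n_i)-v(n_j)\in\Gamma_C$, contradicting goodness over $C$. For \emph{separation over $L$}: fix $\ell_1,\dots,\ell_r\in L$ not all zero, set $\gamma=\min_i v(\ell_i n_i)$ and $S=\{i : v(\ell_i n_i)=\gamma\}$. For $i,j\in S$ we have $v(n_i)-v(n_j)=v(\ell_j)-v(\ell_i)\in\Gamma_L\cap\Gamma_M=\Gamma_C$, so goodness over $C$ forces all $n_i$ with $i\in S$ to share a common value $\delta$, whence the corresponding $\ell_i$ all have value $\gamma-\delta$. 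As the remaining terms have value $>\gamma$, it is enough to show $v\big(\sum_{i\in S}\ell_i n_i\big)=\gamma$; dividing by $\ell_{i_0}n_{i_0}$ for a fixed $i_0\in S$, this says $\sum_{i\in S}\bar{\ell}_i\,(n_i/n_{i_0})$ is a unit, where $\bar{\ell}_i:=\ell_i/\ell_{i_0}\in\mathcal{O}_L^\times$. If it were not, its residue would vanish, giving $\sum_{i\in S}\res(\bar{\ell}_i)\,\res(n_i/n_{i_0})=0$ with each $\res(\bar{\ell}_i)\in k_L^\times$. But the residues $\res(n_i/n_{i_0})\in k_M$ are $k_C$-linearly independent (lift any $k_C$-relation to $\mathcal{O}_C$ and contradict separation of $\{n_i : i\in S\}$ over $C$, all of common value $\delta$), so by hypothesis (2) they remain $k_L$-linearly independent --- contradiction. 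This half is where all three hypotheses are genuinely used, and the passage from $k_C$-independence to $k_L$-independence via linear disjointness is the step I expect to be the crux.

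The remaining assertions are then formal. \emph{Linear disjointness}: a $C$-linearly independent tuple $\bar m$ from $M$ spans a $C$-subspace of dimension $k=|\bar m|$, a good separated basis of which has size $k$ and stays $L$-linearly independent by the claim, so $\dim_L\Vect_L(\bar m)=k$ and $\bar m$ is $L$-linearly independent. \emph{Value group}: every nonzero $x\in C[L,M]$ has a normal form $x=\sum_i\ell_i' n_i$ with $\ell_i'\in L$ and $\{n_i\}$ a good separated basis over $L$ of a finite-dimensional $C$-subspace of $M$ containing the $M$-coordinates of $x$, so $v(x)=\min_i(v(\ell_i')+v(n_i))\in\Gamma_L+\Gamma_M$; ratios of such elements exhaust $C(L,M)^\times$, giving $\Gamma_{C(L,M)}=\Gamma_L+\Gamma_M$ (which contains $\Gamma_C$). \emph{Residue field}: using the same normal form together with $v\colon C^\times\twoheadrightarrow\Gamma_C$ --- so that any value lying in $\Gamma_L\cap\Gamma_M=\Gamma_C$ can be split off into a factor from $C$ --- the residue of a ratio of two normal forms is seen to lie in the field generated by $k_L$ and $k_M$ over $k_C$, and the reverse inclusion is immediate. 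Finally, all three conclusions are symmetric under swapping $L$ and $M$, matching the ``$M$ (or $L$)'' phrasing in the statement.
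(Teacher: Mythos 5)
Your proof is correct. The paper itself gives no proof of this proposition (it is quoted as \cite[Lemma 2.5]{Haskell}), but your argument is a sound reconstruction along the standard lines of that reference and of \cite[Chapter 12]{HHM}: the crux, as you identify, is that a good separated $C$-basis of $\Vect_C(\bar m)$ remains separated over $L$, which you reduce via hypothesis (1) and goodness to a single common value and then settle by lifting a putative $k_L$-relation and invoking linear disjointness of $k_M$ and $k_L$ over $k_C$. The derivations of linear disjointness of $L$ and $M$, of $\Gamma_{C(L,M)}=\Gamma_L+\Gamma_M$, and of the residue field statement (splitting values in $\Gamma_L\cap\Gamma_M=\Gamma_C$ off into factors from $C$, exactly as in Fact \ref{residueok}) are all correct.
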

The following is a direct consequence of Proposition \ref{linearlydisjoint}, but we bring details to the picture to clarify some of our arguments. 
\begin{fact}\label{residueok} Let $L$ and $M$ be valued fields with $C \subseteq L \cap M$ a common valued subfield. Assume that:
\begin{enumerate}
    \item $\Gamma_{L} \cap \Gamma_{M}=\Gamma_{C}$, 
    \item $k_{M}$ and $k_{L}$ are linearly disjoint over $k_{C}$,
    \item $M$ (or $L$) has the good separated basis property over $C$.
\end{enumerate}
Let $a \in \mathcal{O}_{C(L,M)}^{\times}$ then there are elements $l^{1}_{1}, \dots, l^{1}_{k}, l^{2}_{1}, \dots, l^{2}_{s}, l \in \mathcal{O}_{L}$ and  $m^{1}_{1}, \dots, m^{1}_{k}, m^{2}_{1}, \dots, m^{2}_{s}, m \in \mathcal{O}_{M}$, such that:
\begin{align*}
\res(a)= \big( 1 + \sum _{i \leq k} \res(l_{i}^{1}) \res(m_{i}^{1}) \big) \big( 1 + \sum _{i \leq s} \res(l_{i}^{2}) \res(m_{i}^{2}) \big)^{-1} \res(l) \res(m).  
\end{align*}
\end{fact}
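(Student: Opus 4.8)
The plan is to reduce everything to the residue field using the conclusion of Proposition \ref{linearlydisjoint}. Hypotheses (1), (2) and (3) are exactly the hypotheses of that proposition, so $L$ and $M$ are linearly disjoint over $C$ and, crucially, $k_{C(L,M)}$ is the field generated by $k_M$ and $k_L$ over $k_C$. Since $a \in \mathcal{O}_{C(L,M)}^{\times}$ we have $\res(a) \in k_{C(L,M)}^{\times}$, and $k_{C(L,M)}$ is the fraction field of the subring $R$ of $k_{C(L,M)}$ generated by $k_L \cup k_M$. Every element of $R$ is a finite sum of products of elements of $k_L \cup k_M$; grouping the $k_L$-factors and the $k_M$-factors of each such product (and absorbing $k_C \subseteq k_L \cap k_M$ on either side), every element of $R$ has the form $\sum_i \bar l_i \bar m_i$ with $\bar l_i \in k_L$ and $\bar m_i \in k_M$.

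Next I would write $\res(a) = N/D$ with $N, D \in R$ and $D \neq 0$; since $\res(a) \neq 0$ also $N \neq 0$. Writing $N = \sum \bar l_i \bar m_i$ and $D = \sum \bar l'_i \bar m'_i$ and discarding the summands that vanish, we may assume every $\bar l_i, \bar m_i, \bar l'_i, \bar m'_i$ is nonzero, say $N = \sum_{i=1}^{k+1}\bar l_i \bar m_i$ and $D = \sum_{i=1}^{s+1} \bar l'_i \bar m'_i$. Dividing out the first summand of each,
\[
N = \bar l_1 \bar m_1 \Big(1 + \sum_{i=1}^{k} \frac{\bar l_{i+1}}{\bar l_1}\,\frac{\bar m_{i+1}}{\bar m_1}\Big), \qquad D = \bar l'_1 \bar m'_1 \Big(1 + \sum_{i=1}^{s} \frac{\bar l'_{i+1}}{\bar l'_1}\,\frac{\bar m'_{i+1}}{\bar m'_1}\Big),
\]
where the displayed fractions lie in $k_L$ and $k_M$ respectively because these are fields and the denominators are nonzero, and where the second parenthesis is itself nonzero because $D \neq 0$ and $\bar l'_1 \bar m'_1 \neq 0$. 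Dividing $N$ by $D$ and writing the leading quotient as $\bar l_1 \bar m_1/(\bar l'_1 \bar m'_1) = (\bar l_1/\bar l'_1)(\bar m_1/\bar m'_1)$, with $\bar l_1/\bar l'_1 \in k_L$ and $\bar m_1/\bar m'_1 \in k_M$, expresses $\res(a)$ in the required shape as an identity in $k_{C(L,M)}$.

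Finally, I would lift this identity along the surjective residue maps $\res\colon \mathcal{O}_L \to k_L$ and $\res\colon \mathcal{O}_M \to k_M$: choose $l \in \mathcal{O}_L$ with $\res(l) = \bar l_1/\bar l'_1$, $m \in \mathcal{O}_M$ with $\res(m) = \bar m_1/\bar m'_1$, and $l^1_i, l^2_i \in \mathcal{O}_L$, $m^1_i, m^2_i \in \mathcal{O}_M$ lifting the remaining fractions. Since $\res$ is a ring homomorphism on each valuation ring, substituting these lifts into the identity above gives exactly
\[
\res(a)= \big( 1 + \sum_{i \leq k} \res(l_{i}^{1}) \res(m_{i}^{1}) \big) \big( 1 + \sum_{i \leq s} \res(l_{i}^{2}) \res(m_{i}^{2}) \big)^{-1} \res(l) \res(m),
\]
as claimed. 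The only delicate points are book-keeping ones: discarding zero summands so that every denominator occurring is a unit of the residue field, and noting that the factor $1 + \sum_{i \le s}\res(l^2_i)\res(m^2_i)$ is nonzero, so that the right-hand side is well defined; these follow from $\res(a) \neq 0$ and $D \neq 0$ respectively. Beyond quoting Proposition \ref{linearlydisjoint} correctly there is no real obstacle.
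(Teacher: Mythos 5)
Your proof is correct, but it takes a genuinely different route from the paper's. You work entirely downstairs in the residue field: you invoke Proposition \ref{linearlydisjoint} only for its conclusion that $k_{C(L,M)}$ is the field generated by $k_{L}$ and $k_{M}$ over $k_{C}$, write $\res(a)$ as a quotient of two elements of the ring generated by $k_{L}\cup k_{M}$, normalize by a nonzero leading summand of numerator and denominator, and then lift the resulting $k_{L}$- and $k_{M}$-coordinates arbitrarily along the surjections $\mathcal{O}_{L}\rightarrow k_{L}$ and $\mathcal{O}_{M}\rightarrow k_{M}$. The paper instead works upstairs in $C(L,M)$: it writes $a=y_{1}/y_{2}$ with $y_{i}\in C[L,M]$, uses the separated basis property to isolate the summands $\hat{l}_{j}\hat{m}_{j}$ of minimal valuation, uses $\Gamma_{L}\cap\Gamma_{M}=\Gamma_{C}$ to split each valuation-zero ratio into a product of a unit of $\mathcal{O}_{L}$ and a unit of $\mathcal{O}_{M}$ by inserting an element of $C$, and only then applies $\res$, the non-minimal summands disappearing because their residues vanish. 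Your argument is shorter and avoids all of this valuation-theoretic bookkeeping; what the paper's version buys is that its witnesses $l, m, l^{j}_{i}, m^{j}_{i}$ are obtained by explicit field operations from a representation of $a$ over $L$, $M$ and $C$, which is what the application in Theorem \ref{domination1} implicitly exploits when it transports the resulting identity along the isomorphism $\tau$ (acting as $\sigma$ on $L$ and as the identity on $M$). For the Fact as stated, which is a bare existence claim, your proof is complete, and the book-keeping points you flag (discarding zero summands, nonvanishing of the denominator's parenthesis) are handled correctly.
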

\begin{proof} Let $a= \frac{y_{1}}{y_{2}}$ where $y_{1}, y_{2} \in C[L,M]$. By Proposition \ref{linearlydisjoint} $L$ and $M$ are linearly disjoint over $C$ and $M$ (or $L$) has the separated basis property over $L$ (or $M$). Suppose that $\displaystyle{y_{i}=\sum_{j \leq n_{i}}\hat{l}_{j}^{i}\hat{m}_{j}^{i}}$, by hypothesis $v(y_{1})=v(y_{2})=\gamma$. As $M$ (or $L$) has the separated basis property over $L$ (or $M$) there is some index $j^{i}_{0}$ such that $\gamma= v(\hat{l}_{j^{i}_{0}}^{i} \hat{m}_{j^{i}_{0}}^{i})$ and let $I_{i}=\{ j \leq n_{i}\ | \ v(\hat{l}_{j}^{i} \hat{m}_{j}^{i})=\gamma\}$. Hence  
\begin{equation*}
y_{i}= \hat{l}_{j^{i}_{0}}^{i} \hat{m}_{j^{i}_{0}}^{i} \big( 1 + \sum_{j \leq n_{i}, j \neq j^{i}_{0}} \big(\frac{\hat{l}_{j^{i}}^{i}\hat{m}_{j^{i}}^{i}}{\hat{l}_{j_{0}^{i}}^{i}\hat{m}^{i}_{j_{0}^{i}}}\big)\big).
\end{equation*}
\begin{claim}\label{chumbi} Given elements $l \in L$ and $m \in M$ such that $v(lm)=0$ there is some element $c \in C$ satisfying $v(lc)=0$ and $v(mc^{-1})=0$.\end{claim}
\begin{proof}
Let $l \in L$ and $m \in M$ be such that $v(lm)=0$ then $v(l)=-v(m) \in \Gamma_{L} \cap \Gamma_{M}=\Gamma_{C}$ so there is some $c \in C$ such that $v(lc)=0$ and $v(mc^{-1})=0$.
\end{proof}

In particular, for any $j \in I_{i}$, since $v(\hat{l}_{j}^{i} \hat{m}_{j}^{i})= v(\hat{l}_{j_{0}^{i}}^{i} \hat{m}_{j_{0}^{i}}^{i})$ we have that $v\big( \frac{\hat{l}_{j}^{i}}{\hat{l}_{j_{0}^{i}}^{i}} \frac{\hat{m}_{j_{0}^{i}}^{i}}{\hat{m}_{j}^{i}}\big)=0$, so we can find elements $c_{j}^{i} \in C$ such that  $v\big( \frac{\hat{l}_{j}^{i}}{\hat{l}_{j_{0}^{i}}^{i} } c_{j}^{i} \big)=0$ and $ v \big(  (c_{j}^{i})^{-1}\frac{\hat{m}_{j_{0}^{i}}^{i}}{\hat{m}_{j}^{i}}\big)=0$. Let $l_{j}^{i}= \frac{\hat{l}_{j}^{i}}{\hat{l}_{j_{0}^{i}}^{i} }c_{j}^{i}$ and $m_{j}^{i}=(c_{j}^{i})^{-1}\frac{\hat{m}_{j_{0}^{i}}^{i}}{\hat{m}_{j}^{i}}$. Moreover,
 $\gamma= v(l_{j_{0}^{1}}^{1}m_{j_{0}^{1}}^{1})= v(l_{j_{0}^{2}}^{2}m_{j_{0}^{2}}^{2})$, thus we can find an element $c \in C$ 
 such that $v\big( \frac{l_{j_{0}^{1}}^{1}}{l_{j_{0}^{2}}^{2}}c \big)=0$ and $v\big( \frac{\hat{m}_{j_{0}^{1}}^{1}}{\hat{m}_{j_{0}^{2}}^{2}}c^{-1} \big)=0$, we set $l=\frac{\hat{l}_{j_{0}^{1}}^{1}}{\hat{l}_{j_{0}^{2}}^{1}}c$ and $m=\frac{\hat{m}_{j_{0}^{1}}^{1}}{\hat{m}_{j_{0}^{2}}^{2}}c^{-1}$.\\
Then: 
\begin{align*}
\res(a)&=\res(\frac{y_{1}}{y_{2}})=\res \big( 1 + \sum_{j\leq n_{1}, j\neq j_{0}^{1}} \frac{\hat{l}_{j}^{1} \hat{m}_{j}^{1}}{\hat{l}_{j_{0}^{1}}^{1}\hat{m}_{j_{0}^{1}}^{1}} \big) \big(\res \big( 1 + \sum_{j\leq n_{2}, j\neq j_{0}^{2}} \frac{\hat{l}_{j}^{2} \hat{m}_{j}^{2}}{\hat{l}_{j_{0}^{2}}^{2}\hat{m}_{j_{0}^{2}}^{2}} \big)\big)^{-1} \res \big( \frac{\hat{l}_{j_{0}^{1}}^{1} \hat{m}_{j_{0}^{1}}^{1}}{\hat{l}_{j_{0}^{2}} \hat{m}_{j_{0}^{2}}^{2}}\big)\\
&= \res \big(1+\sum_{j \in I_{1}, j \neq j_{0}^{1}} \big(\frac{\hat{l}_{j}^{1}\hat{m}_{j}^{1}}{\hat{l}^{1}_{j_{0}^{1}} \hat{m}^{1}_{j_{0}^{1}}} \big) \big) \big(\res \big(1+\sum_{j \in I_{2}, j \neq j_{0}^{2}} \big(\frac{\hat{l}_{j}^{2}\hat{m}_{j}^{2}}{\hat{l}^{2}_{j_{0}^{2}} \hat{m}^{2}_{j_{0}^{2}}}\big)\big)\big)^{-1} \res(l) \res(m)\\
&=\big(1+\sum_{j \in I_{1}, j \neq j_{0}^{1}} \res \big(\frac{\hat{l}_{j^{1}}^{1} \hat{m}_{j^{1}}^{1}}{\hat{l}_{j_{0}^{1}}^{1} \hat{m}_{j_{0}^{1}}^{1}}\big)\big) \big(1+\sum_{j \in I_{2}, j \neq j_{0}^{2}} \res \big(\frac{\hat{l}_{j^{2}}^{2} \hat{m}_{j^{2}}^{2}}{\hat{l}_{j_{0}^{2}}^{2} \hat{m}_{j_{0}^{2}}^{2}}\big)\big)^{-1} \res(l)\res(m)\\
&= \big(1+ \sum_{j \in I_{1}, j \neq j_{0}^{1}} \res(l_{j}^{1})\res(m_{j}^{1})\big) \big(1+\sum_{j \in I_{2}, j \neq j_{0}^{2}} \res(l_{j}^{2}) \res(m_{j}^{2})\big)^{-1} \res(l)\res(m) \ \text{as required.}
\end{align*}
\end{proof}
We start by recalling some propositions about extending a given isomorphism of valued fields in the reduct $\mathcal{L}_{\val}$. The following is \cite[Proposition 2.6]{Haskell}. 
\begin{proposition}\label{isoextended} Let $L$ and $M$ be valued fields with $C \subseteq L \cap M$ a common valued subfield. Assume that $\Gamma_{L} \cap \Gamma_{M}=\Gamma_{C}$, $k_{L}$ and $k_{M}$ are linearly disjoint over $k_{C}$ and that $L$ or $M$ has the good separated basis property. Let $\sigma: L \rightarrow L'$ be a $\mathcal{L}_{\val}$ valued field isomorphism which is the identity on $C\Gamma_{L}k_{L}$. Then $\sigma$ can be extended to a $\mathcal{L}_{\val}$- valued field isomorphism $f$ from $C(L,M)$ to $C(L',M)$ which is the identity on $M$ and $f \upharpoonright_{L}=\sigma$. 
\end{proposition}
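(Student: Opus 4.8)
The plan is to construct $f$ first as an abstract field isomorphism and then to check, by hand, that it respects the valuation — the good separated basis property being the engine for that check.

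First I would transport the hypotheses along $\sigma$. Since $\sigma$ is an $\mathcal{L}_{\val}$-isomorphism fixing $C\cup\Gamma_L\cup k_L$ pointwise, $L'$ is a valued subfield of the monster with $\Gamma_{L'}=\Gamma_L$, $k_{L'}=k_L$ and $C\subseteq L'\cap M$; moreover $L'$ inherits the good separated basis property over $C$ whenever $L$ has it. Thus the triple $(L',M,C)$ also satisfies the hypotheses of Proposition \ref{linearlydisjoint}. Applying that proposition to $(L,M,C)$ and to $(L',M,C)$, I get that $L$ and $M$ are linearly disjoint over $C$, that $L'$ and $M$ are linearly disjoint over $C$, that $\Gamma_{C(L,M)}=\Gamma_{C(L',M)}$ is the ordered group generated by $\Gamma_L$ and $\Gamma_M$, and that $k_{C(L,M)}=k_{C(L',M)}=k_L\cdot k_M$. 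Linear disjointness yields the multiplication isomorphisms $L\otimes_C M\cong C[L,M]$ and $L'\otimes_C M\cong C[L',M]$, so $\sigma\otimes\operatorname{id}_M$ induces a ring isomorphism $C[L,M]\to C[L',M]$ and, after passing to fraction fields, a field isomorphism $f\colon C(L,M)\to C(L',M)$ with $f\upharpoonright_L=\sigma$ and $f\upharpoonright_M=\operatorname{id}_M$. Once $f$ is shown to preserve the valuation, the induced map on residue fields is the identity on $k_L$ (because $\sigma$ is) and on $k_M$, hence on $k_L\cdot k_M$, and the induced map on value groups is the identity as well, so $f$ will be the desired $\mathcal{L}_{\val}$-isomorphism.

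It then remains to show $v(f(x))=v(x)$ for $x\in C(L,M)$, and by multiplicativity it is enough to take $x\in C[L,M]$. I would first establish the key sublemma: a \emph{good} separated basis $\{b_1,\dots,b_r\}\subseteq M$ over $C$ stays separated over $L$ — and, by the identical argument (since $\Gamma_{L'}=\Gamma_L$ and $k_{L'}=k_L$), over $L'$ — and symmetrically for a good separated basis contained in $L$. Indeed, in a hypothetical failure $v(\sum_i\ell_ib_i)>\gamma:=\min_i v(\ell_ib_i)$ with $\ell_i\in L$, the indices realizing $\gamma$ have $v(b_i)-v(b_j)\in\Gamma_L\cap\Gamma_M=\Gamma_C$, so goodness forces those $v(b_i)$ to agree; dividing by one such term $\ell_{i_0}b_{i_0}$ and passing to residues gives a vanishing relation $\sum\res(\ell_i/\ell_{i_0})\res(b_i/b_{i_0})=0$ in which the residues $\res(b_i/b_{i_0})\in k_M$ are $k_C$-linearly independent (by separatedness of $\{b_i\}$ over $C$), contradicting linear disjointness of $k_L$ and $k_M$ over $k_C$ (note $\res(\ell_{i_0}/\ell_{i_0})=1\ne0$). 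Granting this: if $M$ has the good separated basis property over $C$, I would write the $M$-components of a given $x\in C[L,M]$ over a good separated basis $\{b_j\}\subseteq M$ of their $C$-span, obtaining $x=\sum_j d_jb_j$ with $d_j\in L$; then $v(x)=\min_j(v(d_j)+v(b_j))$, and $f(x)=\sum_j\sigma(d_j)b_j$ where $\{b_j\}$ is still separated over $L'$, so $v(f(x))=\min_j(v(\sigma(d_j))+v(b_j))=\min_j(v(d_j)+v(b_j))=v(x)$, using that $\sigma$ preserves $v$ and fixes $\Gamma_L$. If instead $L$ has the property, one writes $x=\sum_j d_jb_j$ with $\{b_j\}\subseteq L$ good separated over $C$ and $d_j\in M$; then $f(x)=\sum_j d_j\sigma(b_j)$, and $\{\sigma(b_j)\}\subseteq L'$ is again good separated over $C$, hence separated over $M$ by the sublemma applied to $(L',M,C)$, so $v(f(x))=v(x)$ once more.

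The hard part will be this sublemma — the propagation of separatedness from $C$ to $L$ and $L'$ via the residue-field linear independence argument — since that is exactly where all three hypotheses ($\Gamma_L\cap\Gamma_M=\Gamma_C$, linear disjointness of $k_L$ and $k_M$ over $k_C$, and existence of good separated bases) genuinely enter; the rest is formal bookkeeping. In a full write-up this step is essentially contained in the proof of Proposition \ref{linearlydisjoint} and can be extracted from or cited within it.
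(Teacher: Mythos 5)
Your proof is correct: the paper states this proposition without proof (it simply cites \cite[Proposition 2.6]{Haskell}), and your argument is the standard one — construct $f$ as $\sigma\otimes\mathrm{id}_M$ via linear disjointness of $L$ and $M$ over $C$, then verify $v\circ f=v$ by showing a good separated basis over $C$ remains separated over $L$, $L'$ and $M$ through the residue-field linear-independence argument. The sublemma you isolate is exactly the content of the proof of Proposition \ref{linearlydisjoint}, and your observation that $(L',M,C)$ satisfies the same hypotheses (so the \emph{same} basis stays separated on both sides) is precisely what makes the extension a valued-field isomorphism.
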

We continue arguing that without loss of generality we may assume the $\mathcal{L}_{\val}$-isomorphism to fix the residue field and the value group of $M$ instead. 
\begin{proposition}\label{isoextended2}Let $L$ and $M$ be valued fields with $C \subseteq L \cap M$ a common valued subfield. Assume that:
\begin{enumerate}
    \item $\Gamma_{L} \cap \Gamma_{M}= \Gamma_{C}$, 
    \item $k_{M}$ and $k_{L}$ are linearly disjoint over $k_{C}$,
    \item $M$ or $L$ have the good separated basis property over $C$.
\end{enumerate}
And let $\sigma: C(L,M) \rightarrow C(L',M')$ be an $\mathcal{L}_{\val}$-isomorphism fixing $Ck_{M}\Gamma_{M}$, such that $\sigma(L)=L^{'}$ and $\sigma(M)=M'$. 
 Then there is an $\mathcal{L}_{\val}$-isomorphism $\tau:C(L,M) \rightarrow C(L',M)$ such that $\tau$ is the identity on $M$ and $\tau\upharpoonright _{L}=\sigma \upharpoonright_{L}$. 
 \end{proposition}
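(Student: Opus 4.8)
The plan is to reduce to Proposition \ref{isoextended}: first I would record how hypotheses (1)--(3) transfer to the pair $(M,L')$, and then I would post-compose $\sigma$ with a correcting isomorphism obtained from Proposition \ref{isoextended}.

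First I would observe that $\sigma$ pointwise fixes $C$, $k_M$ and $\Gamma_M$, and that its restrictions $\sigma\upharpoonright_M\colon M\to M'$ and $\sigma\upharpoonright_L\colon L\to L'$ are valued field isomorphisms fixing $C$. Since $\sigma$ carries the value group and residue field sorts of $C(L,M)$ isomorphically onto those of $C(L',M')$, we have $\Gamma_{L'}=\sigma(\Gamma_L)$ and $k_{L'}=\sigma(k_L)$, while $\Gamma_M$ and $k_M$ are fixed. Applying $\sigma$ to (1) and (2) then gives $\Gamma_M\cap\Gamma_{L'}=\sigma(\Gamma_M\cap\Gamma_L)=\sigma(\Gamma_C)=\Gamma_C$, and it shows that $k_M$ and $k_{L'}$ are linearly disjoint over $k_C$ (linear disjointness being preserved by the residue field isomorphism induced by $\sigma$, which is the identity on $k_M$ and $k_C$). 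From (3): if $M$ has the good separated basis property over $C$ we keep it, and if $L$ has it then $L'=\sigma(L)$ has it over $\sigma(C)=C$, since this property is invariant under a valued field isomorphism fixing $C$. Thus the pair $(M,L')$ over $C$ satisfies the hypotheses of Proposition \ref{isoextended}.

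Next I would apply Proposition \ref{isoextended} with $L,M$ there replaced by $M,L'$, and with the isomorphism $\sigma\upharpoonright_M\colon M\to M'$, which is the identity on $C\Gamma_M k_M$ by assumption. This yields an $\mathcal{L}_{\val}$-isomorphism
\[
f\colon C(M,L')\longrightarrow C(M',L')
\]
which is the identity on $L'$ and satisfies $f\upharpoonright_M=\sigma\upharpoonright_M$. Since $C(M,L')=C(L',M)$ and $C(M',L')=C(L',M')$, the latter being exactly $\sigma\bigl(C(L,M)\bigr)$, the map $\tau:=f^{-1}\circ\sigma\colon C(L,M)\to C(L',M)$ is a well-defined $\mathcal{L}_{\val}$-isomorphism. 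For $m\in M$ we get $\tau(m)=f^{-1}(\sigma(m))=f^{-1}(f(m))=m$, so $\tau$ is the identity on $M$; and for $l\in L$ we have $\sigma(l)\in L'$, on which $f$ and hence $f^{-1}$ is the identity, so $\tau(l)=f^{-1}(\sigma(l))=\sigma(l)$, i.e.\ $\tau\upharpoonright_L=\sigma\upharpoonright_L$, as required.

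The only real content beyond bookkeeping is the transfer of hypotheses in the first step: one must use that $\sigma$ pointwise fixes $C$, $k_M$ and $\Gamma_M$ to see that $\Gamma_M\cap\Gamma_{L'}=\Gamma_C$ and the linear disjointness of $k_M$ and $k_{L'}$ over $k_C$ are genuinely the $\sigma$-images of (1) and (2) rather than new assumptions; the rest is a formal diagram chase with Proposition \ref{isoextended} as a black box. (Implicitly we work inside the monster model $\mathfrak{C}$, so that $C(L',M)$ and the application of Proposition \ref{isoextended} to the pair $(M,L')$ both make sense.)
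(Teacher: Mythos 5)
Your proposal is correct and follows essentially the same route as the paper: reduce to Proposition \ref{isoextended} applied on the $M$-side and compose with $\sigma$. The only cosmetic difference is that the paper applies Proposition \ref{isoextended} directly to $\sigma^{-1}\upharpoonright_{M'}\colon M'\to M$ (obtaining $\phi$ with $\tau=\phi\circ\sigma$), whereas you apply it to $\sigma\upharpoonright_M\colon M\to M'$ and then invert, which is the same argument; your explicit verification of the hypothesis transfer to the pair $(M,L')$ is a detail the paper leaves implicit.
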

\begin{proof}
Let $\sigma:C(L,M) \rightarrow C(L', M')$ be the given $\mathcal{L}_{\val}$ isomorphism fixing $Ck_{M}\Gamma_{M}$. We want to find an $\mathcal{L}_{\val}$-isomorphism  $\tau:C(L, M) \rightarrow C(L', M)$ which is the identity on $M$ and such that $\tau\upharpoonright _{L}=\sigma \upharpoonright_{L}$. We consider the restriction map $\sigma^{-1}\upharpoonright _{M'}: M' \rightarrow M$, an $\mathcal{L}_{\val}$-isomorphism fixing $Ck_{M}\Gamma_{M}$. By Proposition \ref{isoextended} there is an $\mathcal{L}_{\val}$-isomorphism $\phi: C(M', L') \rightarrow C(M, L')$ that extends $\sigma^{-1}\upharpoonright _{M'}: M' \rightarrow M$ and is the identity on $L'$. Let $\tau: C(L,M) \rightarrow C(L',M)$ be the $\mathcal{L}_{\val}$-isomorphism given by the composition $\tau= \phi \circ \sigma$; this map satisfies the required conditions.
\end{proof}
We conclude this section by restating our result in terms of domination for the class of henselian valued fields of equicharacteristic zero with residue field algebraically closed. We first recall a general fact about regular extensions:
\begin{fact}\label{mago} Let $F$ be a field, $E$ a regular field extension of $F$ and $R$ be any other field extension of $F$. If $E \ind^{alg}_{F} R$ then $E$ and $R$ are linearly disjoint over $F$.
\end{fact}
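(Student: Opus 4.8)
The statement to prove is \textbf{Fact \ref{mago}}: if $F$ is a field, $E$ a regular field extension of $F$, and $R$ any field extension of $F$ with $E \ind^{alg}_F R$, then $E$ and $R$ are linearly disjoint over $F$.

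The plan is to reduce to the finitely generated case and then use the characterization of linear disjointness via algebraic independence together with the hypothesis of regularity. First I would recall that linear disjointness is a finitary condition: $E$ and $R$ are linearly disjoint over $F$ inside some common extension if and only if every finitely generated subextension $E_0/F$ of $E/F$ is linearly disjoint from $R$ over $F$, and in fact one may further reduce to showing that any finite set of elements of $E$ that is $F$-linearly independent remains $R$-linearly independent. So it suffices to work with a finitely generated subfield $E_0 = F(a_1,\dots,a_n) \subseteq E$; note $E_0$ is still a regular extension of $F$ (regularity passes to subextensions, since $E_0 \cap \overline{F} \subseteq E \cap \overline{F} = F$ and $E_0$ remains separable over $F$), and $E_0 \ind^{alg}_F R$ follows from $E \ind^{alg}_F R$ by monotonicity of algebraic independence.

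Next I would separate the algebraic independence into a transcendence part and an algebraic part. Choose a transcendence basis $t_1,\dots,t_r$ of $E_0$ over $F$ among the $a_i$; since $E_0 \ind^{alg}_F R$, the elements $t_1,\dots,t_r$ are algebraically independent over $R$, so $F(t_1,\dots,t_r)$ and $R$ are linearly disjoint over $F$ (a purely transcendental extension is linearly disjoint from anything it is algebraically independent from — this is the standard fact that $R \otimes_F F(t_1,\dots,t_r)$ is a domain, being a localization of the polynomial ring $R[t_1,\dots,t_r]$). Then $E_0$ is a finite extension of $F(t_1,\dots,t_r)$. Because $E_0/F$ is regular, $E_0$ is separable over $F$, hence separably generated, and $E_0/F(t_1,\dots,t_r)$ is a finite separable extension; moreover $E_0$ is linearly disjoint from $\overline{F}$ over $F$ by regularity. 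By a tower argument for linear disjointness, it suffices to show that $E_0$ and $R(t_1,\dots,t_r)$ are linearly disjoint over $F(t_1,\dots,t_r)$, and since $E_0/F(t_1,\dots,t_r)$ is finite separable, this reduces to checking that a minimal polynomial of a primitive element of $E_0$ over $F(t_1,\dots,t_r)$ stays irreducible over $R(t_1,\dots,t_r)$ — which holds because $E_0$, being linearly disjoint from $\overline{F}$ over $F$, has its primitive element's minimal polynomial irreducible even over $\overline{F}(t_1,\dots,t_r)$, hence a fortiori over $R(t_1,\dots,t_r)$.

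The main obstacle I anticipate is handling the bookkeeping of the two-step tower cleanly: one must be careful that "algebraically independent" in the model-theoretic sense ($\ind^{alg}$) genuinely delivers both that the transcendence basis of $E_0$ stays transcendence-independent over $R$ \emph{and} that no new algebraic relations with $R$ are introduced — essentially that $\mathrm{acl}(F E_0) \cap \mathrm{acl}(F R) = \mathrm{acl}(F)$ inside the algebraically closed field containing everything. Once that is in hand, the two classical facts (a purely transcendental extension algebraically independent from $R$ is linearly disjoint from $R$; a regular, hence separable, finite extension whose defining polynomial is irreducible over $\overline{F}$ stays irreducible over any $R \supseteq F$ after adjoining the transcendence basis) combine via the multiplicativity of linear disjointness in towers. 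Alternatively, and perhaps more cleanly, I would invoke the standard commutative-algebra criterion directly: $E$ and $R$ are linearly disjoint over $F$ iff $E \otimes_F R$ is a domain; since $E/F$ is regular, $E \otimes_F R$ is reduced for \emph{every} $F$-algebra $R$ that is a domain, and it is a domain precisely when $\mathrm{Frac}(E \otimes_F R)$ has no extra algebraic dependencies, which is exactly the hypothesis $E \ind^{alg}_F R$ — so the whole statement is a repackaging of the tensor-product characterization of regular extensions, and I would present it that way if space permits.
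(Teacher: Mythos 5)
The paper does not actually prove this fact; it just cites \cite[Theorem 4.12, Chapter VIII]{Lang}, and your argument is essentially a reconstruction of Lang's proof: reduce to a finitely generated subextension $E_0$, split off a transcendence basis $t=(t_1,\dots,t_r)$, handle the purely transcendental layer using algebraic independence, and handle the remaining finite layer using regularity, gluing the two via the tower property of linear disjointness. That outline is correct and is the standard route.

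One deduction, however, does not work as written. You argue that the minimal polynomial $p$ of a primitive element of $E_0$ over $F(t)$ is irreducible over $\overline{F}(t)$, ``hence a fortiori over $R(t_1,\dots,t_r)$.'' This is not a fortiori: $R(t)$ is not contained in $\overline{F}(t)$ --- $R$ may be a large transcendental extension of $F$ --- so irreducibility over $\overline{F}(t)$ says nothing directly about $R(t)$. The correct argument is that any monic factor of $p$ over $R(t)$ has coefficients which are symmetric functions of roots of $p$, hence algebraic over $F(t)$; since $t$ is algebraically independent over $R$ (this is a second use of $E \ind^{alg}_{F} R$), the relative algebraic closure of $F(t)$ in $R(t)$ is $(\overline{F}\cap R)(t) \subseteq \overline{F}(t)$, so any factorization over $R(t)$ is already a factorization over $\overline{F}(t)$, and irreducibility there (which is what regularity of $E_0/F$ delivers, via linear disjointness from $\overline{F}$ and the tower property) concludes. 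Two smaller points: in positive characteristic you must take $t$ to be a \emph{separating} transcendence basis (it exists among the generators because a regular extension is separable, but an arbitrary transcendence basis need not work, and without separability the primitive-element step fails); and in your closing tensor-product paragraph, regularity alone already makes $E\otimes_F R$ a domain for \emph{every} field extension $R/F$ --- what $E \ind^{alg}_{F} R$ buys is that the multiplication map $E\otimes_F R \to E\cdot R$ into the ambient field has zero kernel, which is a property of the particular embedding, not of the tensor product being a domain.
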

\begin{proof}
See \cite[Theorem $4.12$ Chapter VIII]{Lang}. 
\end{proof}

\begin{corollary}\label{racf}Let $T$ be some $\mathcal{L}_{\val}^{*}$ complete extension of the first order theory of henselian valued fields of equicharacteristic zero with residue field algebraically closed. Let $\mathfrak{M}$ be its monster model and $C$ a maximal field. Let $C \subseteq L$ be a valued field extension such that $\Gamma_{L}/\Gamma_{C}$ is a torsion free extension and $k_{L}$ is a regular extension of $k_{C}$. Then $\tp(L/C)$ is dominated by the residue field and the value group, this is for any field extension $C \subseteq M$ such that $k_{M} \ind^{alg}_{k_{C}} k_{L}$ and $\Gamma_{M} \ind_{\Gamma_{C}}^{s}\Gamma_{L}$ we have $\tp(L/Ck_{M}\Gamma_{M}) \vdash \tp(L/M)$. 
\end{corollary}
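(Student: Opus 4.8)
The plan is to reduce Corollary~\ref{racf} to the $\mathcal{L}_{\val}^*$-analogue of the domination statement over a maximal model (the version that does not presuppose algebraically closed residue field), which the earlier sections of the paper establish via Proposition~\ref{isoextended2} and Pas-style back-and-forth, and then to supply the purely algebraic translation between the hypotheses available here and the hypotheses needed there. So the first step is to unwind the definition of domination: I take a field $M \supseteq C$ with $k_M \ind^{alg}_{k_C} k_L$ and $\Gamma_M \ind^s_{\Gamma_C} \Gamma_L$, take $L' \models \tp(L/Ck_M\Gamma_M)$ realized inside $\mathfrak{M}$ with witnessing $\mathcal{L}_{\val}^*$-partial-elementary map $\sigma: L \to L'$ fixing $Ck_M\Gamma_M$ pointwise, and I must produce an automorphism of $\mathfrak{M}$ extending $\sigma$ and fixing $M$. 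The target is then: extend $\sigma$ to an $\mathcal{L}_{\val}$-isomorphism $C(L,M) \to C(L',M)$ that is the identity on $M$.

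The key algebraic step is to verify the three hypotheses of Proposition~\ref{isoextended} / \ref{isoextended2} from what we are given. First, $C$ is maximally complete, so by Fact~\ref{maximal} every valued field extension of $C$ — in particular $L$ and $M$ — has the good separated basis property over $C$; hypothesis (3) is free. Second, $k_L$ is a regular extension of $k_C$ and $k_M \ind^{alg}_{k_C} k_L$, so by Fact~\ref{mago} the residue fields $k_L$ and $k_M$ are linearly disjoint over $k_C$; hypothesis (2). Third, I need $\Gamma_L \cap \Gamma_M = \Gamma_C$: here I use that $\Gamma_M \ind^s_{\Gamma_C} \Gamma_L$ in the reduct to $\mathcal{L}_{AG}$ together with Fact~\ref{intersection}, which gives $\Gamma_L \cap \Gamma_M \subseteq \dcl(\Gamma_C)$; since (via Fact~\ref{dcllinear}, using the constants in $\mathcal{L}_{\val}^*$ naming $\dcl(\emptyset)\cap\Gamma$ and the fact that $C$ is a model so $\Gamma_C$ is $\dcl$-closed) $\dcl(\Gamma_C) = \Gamma_C$, this yields hypothesis (1). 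The torsion-freeness of $\Gamma_L/\Gamma_C$ enters to guarantee that $\Gamma_{C(L,M)}$ (the group generated by $\Gamma_L$ and $\Gamma_M$) sits correctly and that the extension of $\sigma$ respects the $\mathcal{L}_{OAG}$-structure — concretely it ensures the value-group part of $\sigma$, which fixes $\Gamma_M$ and agrees with $\sigma\!\restriction_{\Gamma_L}$, is a well-defined ordered group embedding on the amalgam.

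With hypotheses (1)--(3) in hand, Proposition~\ref{isoextended2} (after first applying Proposition~\ref{isoextended} to get $\sigma$ defined on $C(L,M)$ at all, using that $\sigma\!\restriction_{L}$ is identity on $Ck_L\Gamma_L$ — wait, it is identity on $Ck_M\Gamma_M$, so the roles of $L$ and $M$ in the two propositions must be matched carefully) produces an $\mathcal{L}_{\val}$-isomorphism $\tau: C(L,M) \to C(L',M)$ with $\tau\!\restriction_M = \id$ and $\tau\!\restriction_L = \sigma\!\restriction_L$. Finally I promote $\tau$ to a full elementary map: the residue field of $C(L,M)$ is generated by $k_L$ over $k_M$ and the value group by $\Gamma_L$ over $\Gamma_M$ (last sentence of Proposition~\ref{linearlydisjoint}), the residue-field component of $\tau$ is elementary because $\sigma$ was elementary on $k_L$ and $k_M$ is fixed, the value-group component is elementary because $\Gamma_M \ind^s_{\Gamma_C} \Gamma_L$ combined with the stationarity/non-forking machinery over the maximal base gives that $\tp(\Gamma_L/\Gamma_M) = \tp(\Gamma_{L'}/\Gamma_M)$, and then Theorem~\ref{QERACF}-style relative quantifier elimination for henselian valued fields with algebraically closed residue field (here packaged as the $\mathcal{L}_{\val}^*$ relative QE) upgrades the good $\mathcal{L}_{\val}$-isomorphism to an elementary one; extend by saturation to $\mathrm{Aut}(\mathfrak{M}/M)$, which shows $\tp(L/M) = \tp(L'/M)$, i.e. $\tp(L/Ck_M\Gamma_M) \vdash \tp(L/M)$.

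\textbf{Main obstacle.} The delicate point is not any single algebraic lemma but the bookkeeping around which of $L$, $M$ plays the "fixed" role in Propositions~\ref{isoextended}--\ref{isoextended2}: $\sigma$ is the identity on $Ck_M\Gamma_M$, so one must first move $L'$ back to a copy where the residue field and value group are literally $k_L,\Gamma_L$, or else run the argument with $M$ in the role that $L$ plays in the cited propositions; getting the orthogonality inputs ($k$ and $\Gamma$ stably embedded and orthogonal, Corollary~\ref{seracf}) aligned so that the residue-field type and value-group type can be controlled independently is where care is needed. I expect the argument to mirror almost verbatim the proof of the $\mathcal{L}$-language version already given in Section~\ref{dominationvalres}, with the algebraically-closed-residue-field hypothesis being exactly what lets one replace the power residue sorts $\mathcal{A}$ by the residue field alone (since $k^\times$ is divisible, each $\mathcal{A}_n$ is trivial), so the real content is the translation, not a new proof.
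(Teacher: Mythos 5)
Your proposal follows essentially the same route as the paper's proof: verify the three hypotheses of Propositions \ref{isoextended}/\ref{isoextended2} using Fact \ref{maximal} (good separated basis over the maximal field $C$), Fact \ref{mago} (linear disjointness of $k_L$ and $k_M$ from regularity plus algebraic independence), and Fact \ref{intersection} (for the value groups); then extend $\sigma$ to an $\mathcal{L}_{\val}$-isomorphism $C(L,M)\to C(L',M)$ fixing $M$ and invoke the relative quantifier elimination of Theorem \ref{QERACF} to see it is elementary. Your worry about which of $L,M$ plays the ``fixed'' role is exactly what Proposition \ref{isoextended2} is there to resolve, so that part is fine.

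There is one localized slip worth flagging. To obtain $\Gamma_L\cap\Gamma_M=\Gamma_C$ you pass from $\Gamma_L\cap\Gamma_M\subseteq\dcl(\Gamma_C)$ to $\dcl(\Gamma_C)=\Gamma_C$ by asserting that $C$ is a model; but the corollary only assumes $C$ is a maximal \emph{field}, so $\Gamma_C$ need not be definably closed. The correct argument --- and the actual role of the torsion-freeness hypothesis, which you instead assign to a vague well-definedness claim about the ordered-group part of the amalgam --- is via Fact \ref{dcllinear}: $\dcl(\Gamma_C)=(\mathbb{Q}\otimes\Gamma_C)\cap\Gamma$, hence $\Gamma_L\cap\dcl(\Gamma_C)\subseteq\Gamma_L\cap(\mathbb{Q}\otimes\Gamma_C)=\Gamma_C$ precisely because $\Gamma_L/\Gamma_C$ is torsion free. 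With that repair your argument coincides with the paper's.
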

\begin{proof} Let $C \subseteq M$ be a field extension such that $k_{M} \ind^{alg}_{k_{C}} k_{L}$ and $\Gamma_{M} \ind_{\Gamma_{C}}^{s}\Gamma_{L}$ . By Fact \ref{mago} $k_{M}$ and $k_{L}$ are linearly disjoint over $k_{C}$. As $\Gamma_{M} \ind_{\Gamma_{C}}^{s} \Gamma_{L}$, by Fact \ref{intersection} $\Gamma_{M} \cap \Gamma_{L} \subseteq \dcl(\Gamma_{C})$. Combining Fact \ref{dcllinear} together with the hypothesis of $\Gamma_{L}/\Gamma_{C}$ is torsion free we obtain that $\Gamma_{M} \cap \Gamma_{L} \subseteq \Gamma_{C}$. Let $L' \vDash \tp(L/Ck_{M}\Gamma_{M})$ and let $\sigma:L \rightarrow L'$ be a partial elementary map fixing $Ck_{M}\Gamma_{M}$, as the hypothesis of Proposition \ref{isoextended} are satisfied, we can find an automorphism $\tau$ of $\mathfrak{M}$ fixing $M$ and extending $\sigma$. The map $\tau$ must be elementary by Theorem \ref{QERACF}, because its restriction to the value group is a partial elementary map of $\Gamma_{\mathfrak{M}}$. We conclude that $\tp(L/Ck_{M}\Gamma_{M}) \vdash \tp(L/M)$, as required. 
\end{proof}
In the following remark we indicate how forking independence relates to the notions of independence required in Corollary \ref{racf}. 
\begin{remark}\label{f1} Let $T$ be some $\mathcal{L}_{\val}^{*}$ complete extension of the first order theory of henselian valued fields of equicharacteristic zero with residue field algebraically closed. Let $\mathfrak{M}$ be its monster model, and $L$, $M$ substructures. Let $C \subseteq L \cap M$ be a common subfield, then if $k_{M}\Gamma_{M} \ind_{C} k_{L}\Gamma_{L}$ we have $k_{M} \ind^{alg}_{k_{C}}k_{L}$ and $\Gamma_{M} \ind^{s}_{\Gamma_{C}} \Gamma_{L}$. 
\end{remark}
\begin{proof}
By Corollary \ref{seracf}, the residue field and the value group are stably embedded and orthogonal to each other, hence $k_{M} \ind_{k_{C}}k_{L}$ and $\Gamma_{M} \ind_{\Gamma_{C}} \Gamma_{L}$.  Forking independence in the residue field implies in particular algebraic independence, so $k_{M} \ind_{k_{C}}^{alg}k_{L}$. Forking independence in the value group guarantees forking independence in the reduct to $\mathcal{L}_{AG}$ so $\Gamma_{M} \ind_{\Gamma_{C}}^{s}\Gamma_{L}$. 
\end{proof}

\subsubsection{Domination by the residue sort and the value group in the language $\mathcal{L}$.}
In this subsection, we let $T$ be some complete extension of the $\mathcal{L}$-theory of henselian valued fields of equicharacteristic zero and we let $\mathfrak{C}$ be its monster model. Given $L$ an $\mathcal{L}$ substructure of $\mathfrak{C}$ and $n \in \mathbb{N}$ we set $(\mathcal{A}_{n})_{L}=\{ \res^{n}(l) \ | \ l \in L\}$, and $\mathcal{A}_{L}=((\mathcal{A}_{n})_{L} \ | \ n \in \mathbb{N})$. The main goal of this subsection is proving that the type of a valued field over a maximal model $C$ is dominated by the power residue sorts and the value group.\\
\begin{theorem}\label{domination1} Let $L$ and $M$ be substructures of $\mathfrak{C}$, and let $C$ be a maximal model of $T$ which is also a common substructure of $L$ and $M$. If 
\begin{enumerate}
    \item $\Gamma_{L} \ind_{\Gamma_{C}}^{s} \Gamma_{M}$, 
    \item $k_{M}$ and $k_{L}$ are linearly disjoint over $k_{C}$.
\end{enumerate}
Then $\tp(L/ C\mathcal{A}_M\Gamma_{M}) \vdash \tp(L/M)$. 
\end{theorem}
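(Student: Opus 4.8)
The plan is to fix a realization $L' \models \tp(L/C\mathcal{A}_M\Gamma_M)$ inside $\mathfrak{C}$ together with a partial elementary map $\sigma_0\colon L \to L'$ which is the identity on $C\mathcal{A}_M\Gamma_M$ (note that $Ck_M\Gamma_M$ is part of this, the residue field being among the residue sorts), and then to promote $\sigma_0$ to an $\mathcal{L}$-isomorphism $\tau\colon C(L,M) \to C(L',M)$ that is the identity on $M$ and agrees with $\sigma_0$ on $L$. Once $\tau$ is built, Theorem \ref{QE} finishes the argument: since $\tau$ is an $\mathcal{L}$-isomorphism of substructures that restricts (as we will check) to a partial elementary map on the residue sorts and on the value group, it is itself partial elementary; being the identity on $M$ and sending $L$ to $L'$, it gives $\tp(L/M) = \tp(L'/M)$, and as $L'$ is an arbitrary realization this yields the domination.

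First I would check that the hypotheses of Propositions \ref{linearlydisjoint}, \ref{isoextended} and \ref{isoextended2} are available for $(L,M,C)$. Conditions (2) and (3) are assumed (in fact (3) is automatic here by Fact \ref{maximal}, since $C$ is maximally complete), and the remaining intersection condition $\Gamma_L \cap \Gamma_M = \Gamma_C$ follows from condition (1): since $C$ is a model, $\Gamma_C$ is definably closed in the value group, so Fact \ref{intersection} applied to $\Gamma_L \ind_{\Gamma_C}^{s} \Gamma_M$ gives $\Gamma_L \cap \Gamma_M \subseteq \dcl(\Gamma_C) = \Gamma_C$, the reverse inclusion being trivial. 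Because $\sigma_0$ is elementary over $Ck_M\Gamma_M$ and all three conditions are first-order over that set, they transfer to the pair $(L',M)$ as well.

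Next I would construct $\tau$. Extend $\sigma_0$ to an automorphism $\hat{\sigma}_0$ of $\mathfrak{C}$ fixing $C\mathcal{A}_M\Gamma_M$ and set $M' := \hat{\sigma}_0(M)$; then $\sigma := \hat{\sigma}_0\upharpoonright_{C(L,M)}$ is an $\mathcal{L}$-isomorphism $C(L,M) \to C(L',M')$ which is the identity on $Ck_M\Gamma_M$ and carries $L$ to $L'$ and $M$ to $M'$. Proposition \ref{isoextended2}, applied to $\sigma$ with the hypotheses verified above, produces an $\mathcal{L}_{\val}$-isomorphism $\tau\colon C(L,M) \to C(L',M)$ that is the identity on $M$ with $\tau\upharpoonright_L = \sigma_0$. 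Since the sorts $\mathcal{A}_n$ and the maps $\res^n,\pi_n$ are canonically determined by the valued field structure, $\tau$ is in fact an $\mathcal{L}$-isomorphism.

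The crux is that $\tau$ restricts to a partial elementary map on the residue sorts and on the value group. By Proposition \ref{linearlydisjoint}, $\Gamma_{C(L,M)}$ is the subgroup generated by $\Gamma_L \cup \Gamma_M$ and $k_{C(L,M)}$ is the subfield generated by $k_L \cup k_M$; both therefore lie in the definable closure of the corresponding union. Using that the residue sorts and the value group are stably embedded and orthogonal to each other (Corollary \ref{stL}), the assertion that $\sigma_0$ is elementary over $\Gamma_M$ (resp. over $k_M$) says precisely that $\sigma_0\upharpoonright_{\Gamma_L}\cup\id_{\Gamma_M}$ (resp. $\sigma_0\upharpoonright_{k_L}\cup\id_{k_M}$) is a partial elementary map of the value group (resp. of the residue field); the restriction of $\tau$ to $\Gamma_{C(L,M)}$, resp. to $k_{C(L,M)}$, is the unique extension of this map to the generated subgroup, resp. subfield, hence remains partial elementary. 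Each $(\mathcal{A}_n)_{C(L,M)}$ lies in $\dcl(k_{C(L,M)})$ via the maps $\pi_n$, so $\tau$ is partial elementary on all the residue sorts, and Theorem \ref{QE} applies. I expect this last paragraph to be the main obstacle: one must make sure that elementariness of the given map over the residue sorts and value group of $M$ genuinely propagates to $\tau$ on the corresponding invariants of the compositum, which works exactly because forming the generated field (resp. group) is a definable-closure operation and because orthogonality of the two stably embedded families lets the residue and value-group parts be handled separately, the higher residue sorts $\mathcal{A}_n$ causing no extra trouble since they are interpreted inside the residue field.
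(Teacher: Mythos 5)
Your overall architecture (realize $L'$, invoke Fact \ref{intersection} to get $\Gamma_L\cap\Gamma_M=\Gamma_C$, apply Proposition \ref{isoextended2} to get an $\mathcal{L}_{\val}$-isomorphism $\tau$ fixing $M$, then appeal to relative quantifier elimination) matches the paper, and your treatment of the value group and of the residue field proper is fine. The gap is in the sentence ``Each $(\mathcal{A}_{n})_{C(L,M)}$ lies in $\dcl(k_{C(L,M)})$ via the maps $\pi_{n}$,'' and in the earlier claim that $\tau$ is ``in fact an $\mathcal{L}$-isomorphism'' because the sorts $\mathcal{A}_n$ are canonically determined. For $n\geq 2$ and $x\in C(L,M)$ with $v(x)\in n\Gamma\setminus\{0\}$, the element $\res^{n}(x)$ is $\pi_{n}(\res(x/b^{n}))$ for a witness $b$ with $nv(b)=v(x)$ that in general lies outside $C(L,M)$; so $(\mathcal{A}_{n})_{C(L,M)}$ is not the image under $\pi_{n}$ of $k_{C(L,M)}$, and the higher residue sorts are not ``interpreted inside the residue field'' of the compositum. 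Consequently you have neither defined what $\tau$ does on $(\mathcal{A}_{n})_{C(L,M)}$ nor checked that the assignment $\res^{n}(x)\mapsto\res^{n}(\tau(x))$ is well defined and elementary, and this is where the real work of the theorem sits.

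The paper closes this gap with a claim you are missing: every $x\in C(L,M)$ with $v(x)\in n\Gamma$ can be written as $x=a\,l\,m$ with $a\in\mathcal{O}_{C(L,M)}^{\times}$, $l\in L$, $m\in M$ and $v(l),v(m)\in n\Gamma$, so that $\res^{n}(x)=\pi_{n}(\res(a))\res^{n}(l)\res^{n}(m)$. Producing such a splitting requires adjusting $l'\in L$, $m'\in M$ with $v(x)=v(l')+v(m')$ by an element $c\in C$ so that both values land in $n\Gamma$; this is obtained from the heir property of $\tp_{\mathcal{L}_{AG}}(v(l')/\Gamma_{M})$ over $\Gamma_{C}$, i.e.\ from the full strength of hypothesis (1), which in your write-up is only used to deduce $\Gamma_L\cap\Gamma_M=\Gamma_C$. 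One then uses Fact \ref{residueok} to express the equality $\res^{n}(x)=\res^{n}(y)$ by a formula over $C\mathcal{A}_{M}$ with parameters from $L$ (it matters here that $\sigma$ fixes all of $\mathcal{A}_M$, not just $k_M$), so that $\sigma$ preserves it and the maps $\tau_{n}$ are well defined and partial elementary. Without this step the appeal to Theorem \ref{QE} is not justified.
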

\begin{proof}
Let $L^{'} \vDash \tp(L/ C\mathcal{A}_{M} \Gamma_{M})$ and let $\sigma$ be a partial elementary map sending $L$ to $L'$ fixing $C\mathcal{A}_{M} \Gamma_{M}$. By Fact \ref{intersection}, $\Gamma_{L} \cap \Gamma_{M} \subseteq \dcl(\Gamma_{C})=\Gamma_{C}$ because $C$ is definably closed. By Proposition \ref{isoextended2}, there is an $\mathcal{L}_{\val}$ valued field isomorphism $\tau:C(L,M) \rightarrow C(L', M)$ which is the identity on $M$ and $\tau \upharpoonright_{L}=\sigma$. By Proposition \ref{linearlydisjoint},  $L$ and $M$ are linearly disjoint over $C$, $L$ has the good basis property over $M$, the value group $\Gamma_{C(L,M)}$ is the group generated by $\Gamma_{M}$ and $\Gamma_{L}$ over $\Gamma_{C}$ and the residue field $k_{C(L,M)}$ is the field generated by $k_{L}$ and $k_{M}$ over $k_{C}$. In particular, any element $x \in C[L,M]$ can be represented as $\displaystyle{x= \sum_{i \leq n} l_{i}m_{i}}$ and $\displaystyle{v(x)=v(\sum_{i \leq n} l_{i}m_{i})= \min\{ v(l_{i})+v(m_{i}) \ | \ i \leq n\}}$. As $\tau$ is an $\mathcal{L}_{val}$-isomorphism, we have:
\begin{align*}
\tau(v(x))= \tau \big( \min\{v(l_{i})+v(m_{i}) \ | \ i \leq n \}\big)= \min \{ \sigma(v(l_{i}))+ v(m_{i}) \ | \ i \leq n\}= v(\tau(x)),
\end{align*}
and because $\sigma:L \rightarrow L^{'}$ is a partial elementary map fixing $\Gamma_{M}$, the restriction map $\tau: \Gamma_{C(L,M)} \rightarrow \Gamma_{C(L', M)}$ is a partial elementary map. We want to extend the $\mathcal{L}_{\val}$-isomorphism to a $\mathcal{L}$-isomorphism, we start by proving the following claim. 

\begin{claim}\label{mari1}{Fix $n \in \mathbb{N}$ and $x \in C(L,M)$ be such that $v(x) \in n\Gamma$. Then there are $a \in \mathcal{O}_{C(L,M)}^{\times}$, $l\in L$ and $m \in M$ such that $\res^{n}(x)= \pi_{n}(\res(a)) \res^{n}(l) \res^{n}(m)$.}\end{claim}
\begin{proof}
Let $x \in C(L,M)$, as $L$ has the separated basis property over $M$ there are $l' \in L$ and $m' \in M$ such that $v(x)=v(l')+v(m')$. Let $\phi(x,y)= \exists \gamma (x+y=n\gamma)$, because $\vDash \phi(v(l'),v(m'))$ the $\mathcal{L}_{AG}$-formula $\phi(x,y)$ is represented in the type $\tp(v(l')/\Gamma_{M})$. Because $\Gamma_{L} \ind_{\Gamma_{C}}^{s}\Gamma_{M}$, by Fact \ref{nonforkingheir}  $\tp_{\mathcal{L}_{AG}}(v(l')/\Gamma_{M})$ is a heir extension of $\tp_{\mathcal{L}_{AG}}(v(l')/\Gamma_{C})$ so we can find some element $c \in C$ such that $\vDash \phi(v(l'),v(c))$. Take $l=l'c \in L$ and $m=m'c^{-1}$, then $v(x)=v(l)+v(m)$ where $v(l),v(m)\in n\Gamma$. Let $a= \frac{x}{lm}$, so $x=a (lm)$ and 
\begin{align*}
\res^{n}(x)= \res^{n}(a) \res^{n}(lm)= \pi_{n}(\res(a)) \res^{n}(l)\res^{n}(m), \ \text{as desired.}
\end{align*}
\end{proof}
Let $x,y \in C(L,M)$, $n \in \mathbb{N}$ be such that $\res^{n}(x)=\res^{n}(y)$, $a,b \in \mathcal{O}_{C(L,M)}^{\times}$ and $l_{1},l_{2} \in L$ and $m_{1},m_{2}\in M$ satisfying $\res^{n}(x)= \pi_{n}(\res(a)) \res^{n}(l_{1})\res^{n}(m_{1})$ and $\res^{n}(y)= \pi_{n}(\res(b)) \res^{n}(l_{2})\res^{n}(m_{2})$. Thus:
\begin{align*}
\res^{n}(x)=\res^{n}(y) \ \text{if and only if} \ \pi_{n}(\res(a))\res^{n}(l_{1})\res^{n}(m_{1})= \pi_{n}(\res(b)) \res^{n}(l_{2})\res^{n}(m_{2}).
\end{align*}
By Fact \ref{residueok} the equality $\pi_{n}(\res(a))\res^{n}(l_{1})\res^{n}(m_{1})= \pi_{n}(\res(b)) \res^{n}(l_{2})\res^{n}(m_{2})$ can be expressed by a formula in $\tp(L/C\mathcal{A}_{M}\Gamma_{M})$, as $\sigma$ is an elementary map the same formula holds for the elements in $\sigma(L)$. As a result, $\pi_{n}(\res(\tau(a)))\res^{n}(\tau(l_{1}))\res^{n}(m_{1})= \pi_{n}(\res(\tau(b))) \res^{n}(\tau(l_{2}))\res^{n}(m_{2})$ so $\res^{n}(\tau(x))=\res^{n}(\tau(y))$.\\
Hence we can naturally extend the $\mathcal{L}_{\val}$-isomorphism $\tau$ to an $\mathcal{L}$-isomorphism, by taking maps $\tau_{n}:(\mathcal{A}_{n})_{C(L,M)} \rightarrow (\mathcal{A}_{n})_{C(L',M)}$  sending the residue class $\res^{n}(x)$ to $\res^{n}(\tau(x))$. Then $\mathbf{t}= \tau \cup \{ \tau_{n} \ | \ \ n \in \mathbb{N} \}$  is a $\mathcal{L}$-isomorphism from $C(L,M)$ into $C(L',M)$ satisfying the following properties:
\begin{enumerate}
\item $\mathbf{t}\upharpoonright _{M}=\id_{M}$ and $\mathbf{\tau} \upharpoonright_{L}=\sigma$, 
\item $\mathbf{t}:\Gamma_{C(L,M)} \rightarrow \Gamma_{C(L',M)}$ is a partial elementary map in $\Gamma_{\mathfrak{C}}$ because $\Gamma_{C(L,M)}$ is the value group generated by $\Gamma_{M}$ and $\Gamma_{L}$ over $\Gamma_{C}$ and $\sigma$ fixes $\Gamma_{M}$,
\item $\mathbf{t}: \mathcal{A}_{(C(L,M))} \rightarrow \mathcal{A}_{(C(L',M))}$ is a partial elementary map in $\mathcal{A}_{\mathfrak{C}}$.This follows by the fact that $\sigma$ is a partial elementary map fixing $\mathcal{A}_{M}$ combined with Claim \ref{mari1} and Fact \ref{residueok}.
\end{enumerate}
By Theorem \ref{QE} $\mathbf{\tau}$ is a partial elementary map and therefore can be extended to an automorphism of $\mathfrak{C}$. As a result $\tp(L/M)=\tp(L'/M)$ as required. 
\end{proof}
We conclude this section by restating our result in terms of domination. 
\begin{corollary}\label{domvalresL} Let $C \subseteq L$ be substructures of $\mathfrak{C}$ with $C$ a maximal model of $T$. Then $\tp(L/C)$ is dominated by the value group and the power residue sorts, this is for any field extension $C \subseteq M$ such that $k_{M} \ind_{k_{C}}^{alg} k_{L}$ and $\Gamma_{M} \ind_{\Gamma_{C}}^{s} \Gamma_{L}$ we have $\tp(L/C\mathcal{A}_{M}\Gamma_{M}) \vdash \tp(L/M)$
\end{corollary}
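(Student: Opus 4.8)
The plan is to deduce the corollary directly from Theorem \ref{domination1}: it is merely a repackaging, in the language of domination, of that theorem, so the only work is to check that the independence hypotheses imposed on $M$ entail the three hypotheses of Theorem \ref{domination1}. (Note that the value group together with the power residue sorts form a stably embedded collection of sorts by Corollary \ref{stL}, so domination by them is a meaningful notion.) Concretely, I would fix a field extension $C\subseteq M$ inside $\mathfrak{C}$ with $k_{M}\ind^{alg}_{k_{C}}k_{L}$ and $\Gamma_{M}\ind^{s}_{\Gamma_{C}}\Gamma_{L}$, and verify hypotheses (1), (2), (3) of Theorem \ref{domination1} in turn.

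Hypothesis (3) comes for free: since $C$ is a maximal model of $T$ it is maximally complete as a valued field, so Fact \ref{maximal} gives that every valued field extension of $C$ has the good separated basis property over $C$; in particular $L$ does. For hypothesis (1), recall that $\Gamma_{M}\ind^{s}_{\Gamma_{C}}\Gamma_{L}$ means by definition that $\Gamma_{M}$ and $\Gamma_{L}$ are forking-independent over $\Gamma_{C}$ in the stable reduct of the value group to $\mathcal{L}_{AG}$; as forking independence in a stable theory is symmetric, this yields $\Gamma_{L}\ind^{s}_{\Gamma_{C}}\Gamma_{M}$, which is exactly the orientation hypothesis (1) asks for. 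For hypothesis (2), I would argue as in the proof of Corollary \ref{racf}: from $k_{M}\ind^{alg}_{k_{C}}k_{L}$ together with Fact \ref{mago}, using regularity of the extension $k_{L}/k_{C}$, one obtains that $k_{M}$ and $k_{L}$ are linearly disjoint over $k_{C}$. The auxiliary equality $\Gamma_{L}\cap\Gamma_{M}=\Gamma_{C}$ needed inside Theorem \ref{domination1} is then produced there from hypothesis (1) via Fact \ref{intersection} and $\dcl(\Gamma_{C})=\Gamma_{C}$ (which holds because $C$ is a model), so nothing further is required here.

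With (1)--(3) in hand, Theorem \ref{domination1} applies and gives $\tp(L/C\mathcal{A}_{M}\Gamma_{M})\vdash\tp(L/M)$, which is precisely the statement that $\tp(L/C)$ is dominated by the value group and the power residue sorts. I do not anticipate a real obstacle here, since all the substantive content is already contained in Theorem \ref{domination1}; the only two points that call for any care are the symmetry of $\ind^{s}$ (so that the value-group hypothesis, stated as $\Gamma_{M}\ind^{s}_{\Gamma_{C}}\Gamma_{L}$, can be used in the form $\Gamma_{L}\ind^{s}_{\Gamma_{C}}\Gamma_{M}$ that Theorem \ref{domination1} requires) and the passage from algebraic independence to linear disjointness of the residue fields, which --- as in Corollary \ref{racf} --- relies on regularity of $k_{L}/k_{C}$ and so is tacitly part of the hypotheses.
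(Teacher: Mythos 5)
Your proposal is correct and follows exactly the paper's route: reduce to Theorem \ref{domination1} by deriving the good separated basis property from Fact \ref{maximal}, linear disjointness of the residue fields from Fact \ref{mago} (where the needed regularity of $k_{L}/k_{C}$ is not a tacit hypothesis but a consequence of $C$ being a model and the residue characteristic being zero), and the value-group hypothesis from symmetry of $\ind^{s}$. Nothing further is required.
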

\begin{proof}
We want to show that $\tp(L/C\mathcal{A}_{M}\Gamma_{M}) \vdash \tp(L/M)$. Because $C$ is a model and the residue field is of characteristic zero, $k_{L}$ is a regular extension of $k_{C}$. By hypothesis $k_{M} \ind_{k_{C}}^{alg}k_{L}$, by Fact \ref{mago} $k_{L}$ and $k_{M}$ must be linearly disjoint over $k_{C}$. By Fact \ref{maximal}, $L$ has the good separated basis property over $C$. Hence, the hypothesis of Theorem \ref{domination1} are satisfied, so $\tp(L/C\mathcal{A}_{M}\Gamma_{M}) \vdash \tp(L/M)$ as required. 
\end{proof}
The following remark emphasizes how forking independence relates to the required independence conditions in Corollary \ref{domvalresL}. 
\begin{remark}\label{f2} Let $C$ and $L$ be as in Corollary  \ref{domvalresL}. Let $C \subseteq M$ be a field extension, such that $\mathcal{A}_{M} \Gamma_{M} \ind_{\mathcal{A}_{C}\Gamma_{C}} \mathcal{A}_{L} \Gamma_{L}$ then $k_{M} \ind^{alg}_{k_{C}} k_{L}$ and $\Gamma_{M} \ind_{\Gamma_{C}}^{s}\Gamma_{L}$. 
\end{remark}
\begin{proof}
Let $M \supseteq C$ be another structure such that $\mathcal{A}_{M} \Gamma_{M} \ind_{\mathcal{A}_{C}\Gamma_{C}} \mathcal{A}_{L} \Gamma_{L}$, because the sorts $\mathcal{A}$ and $\Gamma$ are orthogonal and purely stably embedded this is equivalent to $\mathcal{A}_{M} \ind_{\mathcal{A}_{C}} \mathcal{A}_{L}$ and $\Gamma_{L} \ind_{\Gamma_{C}} \Gamma_{M}$. In particular in the reduct to $\mathcal{L}_{AG}$ it must be the case that  $\Gamma_{M} \ind_{\Gamma_{C}}^{s} \Gamma_{L}$. Because $\mathcal{A}_{L} \ind_{\mathcal{A}_{C}} \mathcal{A}_{M}$, in particular $k_{L}$ and $k_{M}$ are algebraically independent over $k_{C}$.
\end{proof}
\subsubsection{Domination by the residue field and the value group in the $\mathcal{L}_{\ac}$-language}
In this subsection we prove a domination result for henselian valued fields of equicharacteristic zero in the language $\mathcal{L}_{\ac}$, using Theorem \ref{Pas}.\\
Adding an angular component simplifies significantly the henselian valued field, in fact it corresponds to having the exact sequence $\displaystyle{1 \rightarrow k^{\times} \rightarrow RV \rightarrow \Gamma \rightarrow 0}$ to split. However, it should be noted that adding an angular component increases the set of definable sets, so it is interesting to understand as well domination results in this framework by its own sake. \\
 Through this section $T$ is some complete extension of the $\mathcal{L}_{\ac}$-theory of henselian valued fields of equicharacteristic zero and $\mathfrak{C}$ is the monster model. Given $M$ a substructure of $\mathfrak{C}$, we will write $k(M)$ to denote $\dcl(M) \cap k_{\mathfrak{C}}$ and we observe that $\ac(M) \subseteq k(M)$. 

\begin{theorem}\label{domac} Let $L$ and $M$ be good substructures of $\mathfrak{C}$, and let $C$ be a maximal model of $T$ which is also a common substructure of $L$ and $M$. If the following conditions hold
\begin{enumerate}
    \item $k_{M}$ and $k_{L}$ are linearly disjoint over $k_{C}$,
    \item $\Gamma_{M} \cap \Gamma_{L}=\Gamma_{C}$, 
    \item $M$ or $L$ have the good separated basis property over $C$,
\end{enumerate}
then $\tp(L/ Ck(M)\Gamma_{M}) \vdash \tp(L/M)$. 
\end{theorem}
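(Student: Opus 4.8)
The plan is to run the argument of Theorem \ref{domination1} with Pas's relative quantifier elimination (Theorem \ref{Pas}) in place of Theorem \ref{QE}, the angular component taking over the bookkeeping role played there by the power residue maps $\res^{n}$. Fix $L'\models\tp(L/Ck(M)\Gamma_{M})$ and a partial elementary map $\sigma\colon L\to L'$ fixing $Ck(M)\Gamma_{M}$ pointwise; extend $\sigma$ to an automorphism $\hat\sigma$ of $\mathfrak{C}$ and set $M'=\hat\sigma(M)$. Since $\Gamma_{L}\cap\Gamma_{M}=\Gamma_{C}$ by hypothesis (2), while (1) and (3) hold, Proposition \ref{isoextended2} applied to the $\mathcal{L}_{\val}$-isomorphism $\hat\sigma\upharpoonright_{C(L,M)}\colon C(L,M)\to C(L',M')$ produces an $\mathcal{L}_{\val}$-isomorphism $\tau\colon C(L,M)\to C(L',M)$ which is the identity on $M$ (on all three sorts) and agrees with $\sigma$ on $L$. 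By Proposition \ref{linearlydisjoint}, $L$ and $M$ are linearly disjoint over $C$, one of them has the good separated basis property over the other, $\Gamma_{C(L,M)}=\langle\Gamma_{L},\Gamma_{M}\rangle$ and $k_{C(L,M)}=k_{L}k_{M}$.

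I then want to view $\tau$ as a good map between the good substructures $\mathcal{E}=(C(L,M),k_{L}k_{M},\Gamma_{C(L,M)})$ and $\mathcal{E}'=(C(L',M),k_{L'}k_{M},\Gamma_{C(L',M)})$: once these are genuinely good substructures (i.e.\ $\ac(C(L,M))\subseteq k_{L}k_{M}$, see below) and the angular-component compatibility $f_{r}(\ac(a))=\ac'(f(a))$ is verified, Theorem \ref{Pas} yields that $\tau$ is elementary, and any automorphism of $\mathfrak{C}$ extending it fixes $M$ and maps $L$ to $L'$, so $\tp(L/M)=\tp(L'/M)$; since $L'$ was an arbitrary realization of $\tp(L/Ck(M)\Gamma_{M})$ this is exactly the conclusion. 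The two elementarity clauses of a good map come for free: $\sigma$ is partial elementary in $\mathfrak{C}$ and fixes $k_{M}\subseteq k(M)$ and $\Gamma_{M}$, so by stable embeddedness and orthogonality of $\mathbf{k}$ and $\Gamma$ (Corollary \ref{seac}) the restrictions of $\sigma$ to $k_{L}$ and to $\Gamma_{L}$ are partial elementary in $\mathbf{k}_{\mathfrak{C}}$ over $k_{M}$, resp.\ in $\Gamma_{\mathfrak{C}}$ over $\Gamma_{M}$; since $k_{L}k_{M}\subseteq\dcl(k_{L}\cup k_{M})$ and $\Gamma_{C(L,M)}\subseteq\dcl(\Gamma_{L}\cup\Gamma_{M})$, the residue and value-group components of $\tau$ are restrictions of automorphisms of $\mathbf{k}_{\mathfrak{C}}$, resp.\ $\Gamma_{\mathfrak{C}}$, hence partial elementary. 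The remaining clauses ($\tau$ commuting with $v$, $\tau$ a field isomorphism) hold because $\tau$ is an $\mathcal{L}_{\val}$-isomorphism.

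The heart of the matter --- and the step I expect to be the main obstacle --- is proving $\ac(C(L,M))\subseteq k_{L}k_{M}$ and $\tau(\ac(a))=\ac'(\tau(a))$ for every $a\in C(L,M)$; this is an angular-component refinement of Fact \ref{residueok}. Write $a=y_{1}/y_{2}$ with $y_{i}\in C[L,M]$, and, using the good separated basis property (Proposition \ref{linearlydisjoint}), expand $y_{i}=\sum_{j}\hat{l}^{i}_{j}\hat{m}^{i}_{j}$ with $\hat{l}^{i}_{j}\in L$, $\hat{m}^{i}_{j}\in M$ and $v(y_{i})=\min_{j}\bigl(v(\hat{l}^{i}_{j})+v(\hat{m}^{i}_{j})\bigr)$, attained exactly on an index set $I_{i}\ni j_{0}^{i}$. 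Factoring out $\hat{l}^{i}_{j_{0}^{i}}\hat{m}^{i}_{j_{0}^{i}}$ and applying the trick from the proof of Fact \ref{residueok} (choosing $c\in C$ of the appropriate valuation) to rewrite each cross term $\hat{l}^{i}_{j}\hat{m}^{i}_{j}/(\hat{l}^{i}_{j_{0}^{i}}\hat{m}^{i}_{j_{0}^{i}})$ with $j\in I_{i}$ as a product $l^{i}_{j}m^{i}_{j}$ of units $l^{i}_{j}\in\mathcal{O}_{L}^{\times}$ and $m^{i}_{j}\in\mathcal{O}_{M}^{\times}$, one obtains (the sums over $j\in I_{i}\setminus\{j_{0}^{i}\}$):
\begin{align*}
\ac(a)=\ac\bigl(\hat{l}^{1}_{j_{0}^{1}}/\hat{l}^{2}_{j_{0}^{2}}\bigr)\,\ac\bigl(\hat{m}^{1}_{j_{0}^{1}}/\hat{m}^{2}_{j_{0}^{2}}\bigr)\,\Bigl(1+\sum_{j}\res(l^{1}_{j})\res(m^{1}_{j})\Bigr)\Bigl(1+\sum_{j}\res(l^{2}_{j})\res(m^{2}_{j})\Bigr)^{-1}.
\end{align*}
Because $L$ and $M$ are good substructures we have $\ac(L)\subseteq k_{L}$, $\ac(M)\subseteq k_{M}$, $\res(\mathcal{O}_{L})\subseteq k_{L}$ and $\res(\mathcal{O}_{M})\subseteq k_{M}$, so the right-hand side, hence $\ac(a)$, lies in $k_{L}k_{M}$. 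Applying $\tau$: it is an $\mathcal{L}_{\val}$-isomorphism fixing $M$, so $\tau(y_{i})=\sum_{j}\sigma(\hat{l}^{i}_{j})\hat{m}^{i}_{j}$ with the same $I_{i}$ and $j_{0}^{i}$, and the same identity computes $\ac'(\tau(a))$ with each $\hat{l}^{i}_{j}$ replaced by $\sigma(\hat{l}^{i}_{j})$ and each $l^{i}_{j}$ by $\sigma(l^{i}_{j})$; since $\sigma$ commutes with the definable maps $\ac$ and $\res$, its residue component agrees with that of $\tau$ on $k_{L}$, and $\tau$ fixes $k_{M}$ and the elements $\hat{m}^{i}_{j},m^{i}_{j}$, this right-hand side equals $\tau$ (a field homomorphism on the residue sort) applied to the displayed expression, i.e.\ $\tau(\ac(a))$. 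This shows $\tau$ is a good map, and Theorem \ref{Pas} completes the proof as explained above.
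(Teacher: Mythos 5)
Your proposal is correct and follows essentially the same route as the paper: reduce to an $\mathcal{L}_{\val}$-isomorphism $\tau$ fixing $M$ via Proposition \ref{isoextended2}, use the separated basis to check that $\tau$ respects the angular component, and conclude by Pas's theorem. The only cosmetic difference is that you compute $\ac$ directly on quotients $y_{1}/y_{2}$ with an explicit formula in the spirit of Fact \ref{residueok}, whereas the paper writes an element of $C[L,M]$ as $a\,l\,m$ with $a\in\mathcal{O}_{C(L,M)}^{\times}$ and lets multiplicativity of $\ac$ handle the rest; both verifications are equivalent.
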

\begin{proof}
As in Theorem \ref{domination1} we start taking $L^{'} \vDash \tp(L/ Ck(M)\Gamma_{M})$, and $\sigma$ a partial elementary map sending $L$ to $L'$ fixing $Ck(M) \Gamma_{M}$. By Proposition \ref{isoextended2}, there is a $\mathcal{L}_{\val}$ valued field isomorphism $\tau: C(L,M) \rightarrow C(L', M)$ which is the identity on $M$ and $\tau \upharpoonright_{L}=\sigma$.\\
By Proposition \ref{linearlydisjoint},  $L$ and $M$ are linearly disjoint over $C$, $M$ (or $L$) has the separated basis property over $L$ (or $M$), the value group $\Gamma_{C(L,M)}$ is the group generated by $\Gamma_{M}$ and $\Gamma_{L}$ over $\Gamma_{C}$ and the residue field $k_{C(L,M)}$ is the field generated by $k_{L}$ and $k_{M}$ over $k_{C}$. In particular, any element $x \in C[L,M]$ can be represented as $\displaystyle{x= \sum_{i \leq n} l_{i}m_{i}}$ and $\displaystyle{v(x)=v(\sum_{i \leq n} l_{i}m_{i})= \min\{ v(l_{i})+v(m_{i}) \ | \ i \leq n\}}$. As $\tau$ is an $\mathcal{L}_{\val}$-isomorphism, we have:
\begin{align*}
\tau(v(x))= \tau \big( \min\{v(l_{i})+v(m_{i}) \ | \ i \leq n \}\big)= \min \{ \sigma(v(l_{i}))+ v(m_{i}) \ | \ i \leq n\}= v(\tau(x)).
\end{align*}
We want to extend the $\mathcal{L}_{\val}$-isomorphism to a $\mathcal{L}_{\ac}$-isomorphism, so it is sufficient to verify that $\tau$ respects also the angular component map. \\

\begin{claim}\label{mari2}{ Let $x \in C[L,M]$ then there are $a \in \mathcal{O}_{C(L,M)}^{\times}$, $l \in L$ and $m \in M$ such that $x=alm$. In particular,  $\tau(\ac(x))=\ac(\tau(x))$ and $\ac(x)=\res(a) \ac(l)\ac(m)$.}\end{claim}
\begin{proof}
Let  $x \in C[L,M]$ and suppose that $\displaystyle{x=\sum_{i \leq n} l_{i}m_{i}}$. Because $M$ (or $L$) has the separated basis property over $L$ (or $M$) there is some $i_{0}\leq n$ such that $v(x)=v(l_{i_{0}} m_{i_{0}})$. Let $a= \frac{x}{l_{i_{0}}{m_{0}}} \in \mathcal{O}_{C(L,M)}^{\times}$, then 
\begin{equation*}
\ac(x)= \ac(l_{i_{0}}m_{i_{0}} a)= \ac(l_{i_{0}}) \ac(m_{i_{0}})\ac(a) =\ac(l_{i_{0}}) \ac(m_{i_{0}}) \res(a).
\end{equation*}
Note that $\tau(x)= \tau(a) \sigma(l_{i_{0}})m_{i_{0}}$. Thus:
\begin{align*}
\tau(\ac(x))&= \tau(\ac(l_{i_{0}}) \ac(m_{i_{0}}) \res(a))= \tau(\ac(l_{i_{0}})) \tau(\ac(m_{i_{0}})) \tau(\res(a))\\
&= \ac(\sigma(l_{i_{0}}))\ac(m_{i_{0}}) \res(\tau(a))= \ac(\sigma(l_{i_{0}})m_{i_{0}}\tau(a)))= \ac(\tau(x)), \ \text{as required. }
\end{align*}
\end{proof}
We conclude that $\tau$ is an $\mathcal{L}_{\ac}$-isomorphism, and because $\Gamma_{C(L,M)}$ is the group generated by $\Gamma_{L}$ and $\Gamma_{M}$ over $\Gamma_{C}$  and $\sigma$ fixes $\Gamma_{M}$,  then $\tau \upharpoonright \Gamma_{C(L,M)} \rightarrow \Gamma_{C(L',M)}$ in an elementary map in $\Gamma$. Combining Claim \ref{mari2}, the fact that the residue field $k_{C(L,M)}$ is the field generated by $k_{L}$ and $k_{M}$ over $k_{C}$ and $\ac(M) \subseteq k(M)$ is fixed by $\sigma$, we can conclude that $\tau\upharpoonright \ac(C(L,M))$ is an elementary map in $\mathbf{k}$. By Theorem \ref{Pas}, such map must be elementary. 
\end{proof}
We restate our result in terms of domination, and we highlight that we required weaker hypothesis compare to Corollary \ref{domvalresL}.
\begin{corollary}\label{domvalresac} Let $T$ be some complete extension of the $\mathcal{L}_{\ac}^{*}$ first order theory of henselian valued fields of equicharacteristic zero and let $\mathfrak{C}$ be its monster model. Let $C \subseteq L$ be substructures of $\mathfrak{C}$, with $C$ maximal, $k_{L}$ a regular extension of $k_{C}$ and $\Gamma_{L}/\Gamma_{C}$ torsion free. Then $\tp(L/C)$ is dominated by the value group and the residue field, this is for any field extension $C \subseteq M$ if $k_{M} \ind_{k_{C}}^{alg} k_{L}$ and $\Gamma_{M} \ind_{\Gamma_{C}}^{s}\Gamma_{L}$ then $\tp(L/ Ck_{M}\Gamma_{M}) \vdash \tp(L/M)$. 
\end{corollary}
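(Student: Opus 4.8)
The plan is to obtain Corollary~\ref{domvalresac} as the ``domination'' reformulation of Theorem~\ref{domac}, exactly parallel to the way Corollary~\ref{domvalresL} follows from Theorem~\ref{domination1}. Fix a field extension $C \subseteq M$ with $k_M \ind_{k_C}^{alg} k_L$ and $\Gamma_M \ind_{\Gamma_C}^{s} \Gamma_L$; I would verify the three hypotheses of Theorem~\ref{domac} for $L$ and $M$ and then quote it. Throughout, $M$ is read as the $\mathcal{L}_{\ac}^{*}$-substructure it generates, so its residue sort $k_M$ already contains $\ac(M)$ and hence coincides with $k(M) = \dcl(M) \cap k_{\mathfrak{C}}$; since this is precisely the sort over which Theorem~\ref{domac} is phrased, the reformulation is immediate once the hypotheses are in place.

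For hypothesis~(1) of Theorem~\ref{domac}: $k_L$ is a regular extension of $k_C$ by assumption, so together with $k_M \ind_{k_C}^{alg} k_L$ and Fact~\ref{mago} we get that $k_M$ and $k_L$ are linearly disjoint over $k_C$. For hypothesis~(2): from $\Gamma_M \ind_{\Gamma_C}^{s} \Gamma_L$ and Fact~\ref{intersection}, $\Gamma_M \cap \Gamma_L \subseteq \dcl(\Gamma_C)$, and Fact~\ref{dcllinear} identifies $\dcl(\Gamma_C)$ with $(\mathbb{Q} \otimes \Gamma_C) \cap \Gamma$; hence any $\gamma \in \Gamma_M \cap \Gamma_L$ satisfies $n\gamma \in \Gamma_C$ for some $n \geq 1$, and since $\Gamma_L/\Gamma_C$ is torsion free this forces $\gamma \in \Gamma_C$, so $\Gamma_M \cap \Gamma_L = \Gamma_C$. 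For hypothesis~(3): $C$ is maximally complete, so by Fact~\ref{maximal} every valued field extension of $C$ --- in particular $L$ and $M$ --- has the good separated basis property over $C$.

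With (1)--(3) verified, Theorem~\ref{domac} yields $\tp(L/Ck_M\Gamma_M) \vdash \tp(L/M)$, which is the assertion; as in Remarks~\ref{f1} and~\ref{f2} one may moreover note that full forking independence of $k_M\Gamma_M$ from $k_L\Gamma_L$ over $C$ already implies the two hypotheses used, by stable embeddedness and orthogonality of $\mathbf{k}$ and $\Gamma$ (Corollary~\ref{seac}). I expect no genuine obstacle at this level: all the substantive work --- extending the partial elementary map $\sigma: L \to L'$ to an $\mathcal{L}_{\val}$-isomorphism of $C(L,M)$ fixing $M$ (Proposition~\ref{isoextended2}), checking that this isomorphism respects the angular component, and invoking Pas's relative quantifier elimination (Theorem~\ref{Pas}) --- is already packaged inside Theorem~\ref{domac}, and what remains is the bookkeeping translation of the independence hypotheses into conditions (1)--(3). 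The two points that do deserve a moment's care are: that a partial elementary map fixing $Ck_M\Gamma_M$ automatically fixes $Ck(M)\Gamma_M$, which is exactly the identification $k_M = k(M)$ for a substructure $M$ (the extra $\ac$-generated residue-field elements cause no trouble); and that only maximal completeness of $C$ is used, via Fact~\ref{maximal}, matching the hypothesis ``$C$ maximal'' in the corollary. Finally, the reason the hypotheses are weaker than in Corollary~\ref{domvalresL} is structural: an angular component splits $1 \to k^{\times} \to RV \to \Gamma \to 0$, so the $\res^{n}$-data are definable from $\ac$ and $\Gamma$ and one works over the residue field alone instead of the whole family of power residue sorts.
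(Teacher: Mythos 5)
Your proposal is correct and matches the paper's intended argument: the paper's proof is literally ``the argument follows in a very similar manner as Corollary \ref{racf}'', i.e.\ verify linear disjointness of the residue fields via Fact \ref{mago}, deduce $\Gamma_M \cap \Gamma_L = \Gamma_C$ from Facts \ref{intersection} and \ref{dcllinear} plus torsion-freeness, get the good separated basis property from Fact \ref{maximal}, and then invoke the $\mathcal{L}_{\ac}$ isomorphism-extension machinery (Theorem \ref{domac}, i.e.\ Proposition \ref{isoextended2} plus Pas). Your one substantive observation --- that Theorem \ref{domac} is stated for $C$ a maximal \emph{model} but its proof only uses maximal completeness once $\Gamma_M\cap\Gamma_L=\Gamma_C$ is supplied directly --- is accurate and consistent with the paper's remark following the corollary.
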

\begin{proof} The argument follows in a very similar manner as Corollary \ref{racf}. 
\end{proof}
\begin{remark}
\begin{enumerate}
\item As in Remark \ref{f1}, using the purely stable embeddeness and orthogonality between the residue field and the value group one can obtain that forking independence implies the independence conditions required in \ref{domvalresac}, this is if $k_{M}\Gamma_{M} \ind_{C} k_{L}\Gamma_{L}$ implies that $k_{M}\ind_{k_{C}}^{alg}k_{L}$ and $\Gamma_{M} \ind_{\Gamma_{C}}^{s}\Gamma_{L}$. 
\item A similar version of Corollary \ref{domvalresac} can be obtained for the language $\mathcal{L}_{\ac}$ without adding the constants and requiring $C$ to be a model of $T$. The proof is similar to Corollary \ref{domvalresL}.
\end{enumerate}
\end{remark}

\section{Forking over maximal models in $NTP_{2}$ henselian valued fields}\label{fork}

In this section we apply the domination results obtained in Section \ref{dominationvalres} to show that forking independence over maximal models is controlled by Shelah's imaginary expansion of the value group and Shelah's imaginary expansion of the residue field in the class of henselian valued fields of equicharacteristic zero which are $NTP_{2}$. \\
We start by introducing some notation:
\begin{notation} 
\begin{enumerate}
\item Through this section we will work with a slight refinement of the languages introduced in Subsection \ref{lingue}. We denote as $\mathcal{L}'$, $\mathcal{L}_{\ac}'$ and $\mathcal{L}_{\val}'$ the extension of $\mathcal{L}$, $\mathcal{L}_{\ac}$ and $\mathcal{L}_{\val}$ (respectively); where the residue field is equipped with the multi-sorted Shelah's imaginary expansion $k^{eq}$ as well as the value group is endorsed with the language of $\Gamma^{eq}$. Likewise
\item Let $T$ be a complete first order theory in the language $\mathcal{L}'$. Given $\mathcal{S}$ a family of $\mathcal{L}'$ sorts and $A$ a set of parameters, we write $\mathcal{S}(A)$ to denote $\dcl(A)\cap \mathcal{S}$.
\end{enumerate}
\end{notation}
 It is well known that in general forking and dividing are different notions, however, they do coincide in a very large class of theories (sometimes over arbitrary sets of parameters or only over models). In \cite{NTP2} Chernikov and Kaplan shown that if a theory is $NTP_{2}$ then forking and dividing over models are the same. The following is a folklore fact, and it is the left-transitivity of dividing in any theory. 
\begin{fact}\label{left} Let $T$ be a complete first order theory and $\mathfrak{M}$ its monster model. Let $C \subseteq \mathfrak{M}$ be a set of parameters, $a,b,d \in \mathfrak{M}$, if $d \ind_{C}^{d} b$ and $a \ind_{Cd}^{d} b$ then $ad \ind_{C}^{d}b$.  
\end{fact}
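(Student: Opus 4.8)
This is the standard left transitivity of non-forking (the second hypothesis being $b\ind_{Aa}c$, so the data are $a\ind_A c$, $b\ind_{Aa}c$, with conclusion $ab\ind_A c$). I will argue by contradiction straight from the definition of forking, using two standard facts. First, the indiscernibility criterion for non-dividing: if $\tp(d/Be)$ does not divide over $B$, then for every $B$-indiscernible sequence $(e_m)_{m<\omega}$ with $e_0=e$ there is $d'\equiv_{Be}d$ such that $(e_m)_{m<\omega}$ is indiscernible over $Bd'$ (proof: stretch $(e_m)$ to a long $B$-indiscernible sequence, realise the union $\bigcup_m\tp(d/Be_m)$, which is consistent by non-dividing, extract a $Bd'$-indiscernible subsequence by Erd\H{o}s--Rado, and conjugate so the first term is again $e$). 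Second, the extension property of non-forking: if $\tp(d/B)$ does not fork over $A\subseteq B$ and $B\subseteq C$, there is $d'\equiv_B d$ with $\tp(d'/C)$ not forking over $A$, hence (since dividing implies forking) not dividing over $A$ either.

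Suppose now $\tp(ab/Ac)$ forks over $A$. Fix $\varphi(x,y,c)\in\tp(ab/Ac)$ (suppressing parameters from $A$) together with a decomposition $\varphi(x,y,c)\vdash\bigvee_{i<n}\psi_i(x,y,\bar d_i)$ in which each $\psi_i(x,y,\bar d_i)$ divides over $A$, all parameters taken in the monster $\mathfrak M$. By $a\ind_A c$ and the extension property there is $a^{*}\equiv_{Ac}a$ with $\tp(a^{*}/Ac\bar d_0\cdots\bar d_{n-1})$ not dividing over $A$; in particular $\tp(a^{*}/A\bar d_i)$ does not divide over $A$ for each $i$. The key claim is that $\psi_i(a^{*},y,\bar d_i)$ divides over $Aa^{*}$ for every $i$. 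Indeed, fix an $A$-indiscernible sequence $(\bar d_i^{m})_{m<\omega}$ with $\bar d_i^{0}=\bar d_i$ witnessing that $\psi_i(x,y,\bar d_i)$ divides over $A$, say $\{\psi_i(x,y,\bar d_i^{m}):m<\omega\}$ is $k_i$-inconsistent; applying the first fact to $\tp(a^{*}/A\bar d_i)$ and this sequence yields $a^{**}\equiv_{A\bar d_i}a^{*}$ with $(\bar d_i^{m})_{m<\omega}$ indiscernible over $Aa^{**}$. Then $\{\psi_i(a^{**},y,\bar d_i^{m}):m<\omega\}$ is still $k_i$-inconsistent, so $\psi_i(a^{**},y,\bar d_i)$ divides over $Aa^{**}$, and an automorphism fixing $A\bar d_i$ and sending $a^{**}$ to $a^{*}$ gives the claim. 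Substituting $x:=a^{*}$ into the decomposition gives $\varphi(a^{*},y,c)\vdash\bigvee_{i<n}\psi_i(a^{*},y,\bar d_i)$, whence $\varphi(a^{*},y,c)$ forks over $Aa^{*}$. Finally choose $\tau\in\operatorname{Aut}(\mathfrak M/Ac)$ with $\tau(a)=a^{*}$ and set $b':=\tau(b)$; then $\models\varphi(a^{*},b',c)$, so $\varphi(a^{*},y,c)\in\tp(b'/Aa^{*}c)$, while applying $\tau$ to $b\ind_{Aa}c$ gives $b'\ind_{Aa^{*}}c$, contradicting that $\varphi(a^{*},y,c)$, a member of $\tp(b'/Aa^{*}c)$, forks over $Aa^{*}$. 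This contradiction proves $ab\ind_A c$.

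The main obstacle is the asymmetry of forking. One cannot run the argument with $a$ in place of $a^{*}$: turning ``$\psi_i(x,y,\bar d_i)$ divides over $A$'' into ``$\psi_i(a,y,\bar d_i)$ divides over $Aa$'' would require $a$ to be non-dividing independent over $A$ from $\bar d_i$, but the witnesses $\bar d_i$ are produced by the forking of $\tp(ab/Ac)$ and bear no a priori relation to $c$, the only object $a$ is assumed independent from. Passing to $a^{*}$, a realisation of a non-dividing extension of $\tp(a/Ac)$ over $Ac\bar d_0\cdots\bar d_{n-1}$, is exactly what supplies independence from the $\bar d_i$; the price is that the hypothesis on $b$ can only be invoked over the translated base $Aa^{*}$, which is why the statement is one-sided. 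The single genuine computation is the key claim in the middle paragraph.
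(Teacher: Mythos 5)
Your proof is correct. Note that the paper itself gives no proof of this fact --- it is quoted from Shelah (Lemma 1.5 there) --- so there is nothing internal to compare against; your argument is the standard one: extension to move $a$ to a copy $a^{*}$ independent (in the non-dividing sense) from the dividing witnesses $\bar d_i$, the indiscernibility criterion to transfer ``$\psi_i$ divides over $A$'' to ``$\psi_i(a^{*},y,\bar d_i)$ divides over $Aa^{*}$'', and then invariance under an automorphism over $Ac$ to contradict $b\ind_{Aa}c$. You also correctly read the second hypothesis as $b\ind_{Aa}c$; as printed in the paper, ``$a\ind_{Aa}b$'' is a typo (it is trivially true and the stated conclusion would then fail in general), and your reading is the one actually used later in the paper (in the proof of Theorem \ref{forking}, where Fact \ref{left} is applied with $a\ind_{C}b$ replaced by the pair $\mathcal{A}(Ca)\Gamma(Ca)\ind_{C}b$ and $a\ind_{C\mathcal{A}(Ca)\Gamma(Ca)}b$).
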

 
 In \cite[Theorem 7.6]{Chernikov} Chernikov proved  that a henselian valued field of equicharacteristic zero in the $\mathcal{L}_{\ac}$ language is $NTP_{2}$ if and only if its residue field is $NTP_{2}$. Later in \cite[Theorem 3.11]{Touchard} P. Touchard proved that if $\mathcal{K}=(K, \mathbf{k},\Gamma, \ac,res,v)$ is a henselian valued field of equicharacteristic zero then $\bdn(\mathcal{K}_{\ac})=\bdn(\mathbf{k})+\bdn(\Gamma)$, where $\bdn(X)$ is the burden of the definable set $X$ as defined in \cite[Definition 1.12]{Touchard}. He also showed that if a valued field of equicharacteristic zero is considered in the language $\mathcal{L}$ then $\bdn(\mathcal{K})=\max_{n\geq 0} (\bdn(k^{\times}/(k^{\times})^{n})+ \bdn(n\Gamma))$, therefore a henselian valued field of equicharacteristic zero  is $NTP_{2}$ if and only if its residue field is $NTP_{2}$.\\

\begin{lemma}\label{forkingresval} Let $C$ be some set of parameters and $a,b$ tuples in the main field sort. 
\begin{enumerate}
\item In the $\mathcal{L}'$ language,  $\mathcal{A}(Ca) \Gamma(Ca) \ind_{C} b \ \text{if and only if} \ \mathcal{A}(Ca) \Gamma(Ca) \ind_{C} \mathcal{A}^{eq}(Cb) \Gamma^{eq}(Cb)$. 
\item In the $\mathcal{L}_{\ac}'$ language, $k(Ca) \Gamma(Ca) \ind_{C} b \ \text{if and only if} \ k(Ca) \Gamma(Ca) \ind_{C} k^{eq}(Cb) \Gamma^{eq}(Cb)$.
\item For the theory of henselian valued fields of equicharacteristic zero with residue field algebraically closed, in the $\mathcal{L}_{\val}'$-language $k(Ca) \Gamma(Ca) \ind_{C} b \ \text{if and only if} \ k(Ca) \Gamma(Ca) \ind_{C} k(Cb) \Gamma^{eq}(Cb)$.
\end{enumerate}
\end{lemma}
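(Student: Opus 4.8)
The plan is to prove all three items in parallel, since they differ only in which relative quantifier elimination / domination package is invoked: Theorem~\ref{domination1} (equivalently Corollary~\ref{domvalresL}) together with Corollary~\ref{stL} for the $\mathcal{L}$-language; Theorem~\ref{domac} (Corollary~\ref{domvalresac}) together with Corollary~\ref{seac} for $\mathcal{L}_{\ac}$, with $k(X)$ in place of $\mathcal{A}(X)$; and Corollary~\ref{racf} together with Corollary~\ref{seracf} for the residue-field-algebraically-closed case in $\mathcal{L}_{\val}^{*}$. Write $R(X)$ for $\mathcal{A}(X)\Gamma(X)$ (resp.\ $k(X)\Gamma(X)$). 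The implication from left to right needs no work: $R(Cb)\subseteq\dcl(Cb)$, so $R(Ca)\ind_C b$ forces $R(Ca)\ind_C R(Cb)$ by monotonicity of non-forking. Everything below concerns the converse, so assume $R(Ca)\ind_C R(Cb)$ and aim for $R(Ca)\ind_C b$.

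First I would reduce to a maximal model. Choose a maximally complete $M\models T$ with $C\subseteq M$ and $\tp(ab/M)$ an heir of $\tp(ab/C)$ — equivalently, $\tp(M/Cab)$ finitely satisfiable in $C$; such $M$ exists because $C$ embeds elementarily into a maximally complete field and global coheirs exist. Then $M\ind_C ab$ and $ab\ind_C M$, hence in particular $M\ind_C b$. Using these together with the hypothesis $R(Ca)\ind_C R(Cb)$ and the forking calculus (monotonicity, base monotonicity, transitivity, and, over the model $M$, symmetry — available since $T$ is $NTP_{2}$; for the value-group part one may instead argue inside the stable reduct $\Gamma\upharpoonright_{\mathcal{L}_{AG}}$), together with the orthogonality and stable embeddedness of the value group and the (power) residue sorts, I would transfer the independence to the base $M$: it should come out that the valued fields $M(a)$ and $M(b)$ satisfy the hypotheses of the relevant domination theorem over $M$ — value-group independence $\Gamma_{M(a)}\ind^{s}_{\Gamma_M}\Gamma_{M(b)}$; linear disjointness of $k_{M(a)}$ and $k_{M(b)}$ over $k_M$ (using that $k_M$ is relatively algebraically closed in $\mathfrak{C}$, since $M$ is a model in equicharacteristic zero, so $k_{M(a)}/k_M$ is regular, together with Fact~\ref{mago}); and the good separated basis property (Fact~\ref{maximal}, as $M$ is maximally complete). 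The passage from data over $C$ to data over $M$, and from $Ca,Cb$ to $M(a),M(b)$, uses the separated-basis coordinatization of $\Gamma_{M(a)}$, $k_{M(a)}$ and the $\mathcal{A}_n(M(a))$ over $M$ developed in Section~\ref{dominationvalres}. In particular one records $R(Ma)\ind_M R(Mb)$.

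Now I would apply domination over $M$. By Theorem~\ref{domination1} applied over the maximal model $M$ with $L=M(a)$ and the field extension $M(b)$, one gets $\tp(M(a)/M\,R(Mb))\vdash\tp(M(a)/M(b))$, hence $\tp(a/M\,R(Mb))\vdash\tp(a/Mb)$; so $a\ind_{M R(Mb)}b$, and therefore $R(Ma)\ind_{M R(Mb)}b$ since $R(Ma)\subseteq\dcl(Ma)$. Combining with $R(Ma)\ind_M R(Mb)$ from the previous step and transitivity of non-forking gives $R(Ma)\ind_M R(Mb)\,b$, whence $R(Ma)\ind_M b$. Finally I would descend to $C$: from $M\ind_C b$ and $R(Ma)\ind_{CM}b$, Fact~\ref{left} yields $M\,R(Ma)\ind_C b$, and hence $R(Ca)\ind_C b$ by monotonicity, completing the converse.

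The step I expect to be the main obstacle is the transfer: getting from the single forking statement $R(Ca)\ind_C R(Cb)$ over an arbitrary base $C$ to the full list of hypotheses of the domination theorem for $M(a),M(b)$ over the maximal model $M$ — in particular the linear disjointness of the residue fields and the value-group independence — which requires handling the base change to $M$ carefully (the obvious forking manipulations run into the absence of symmetry over non-models) and feeding it through the separated-basis description of how $\Gamma_{M(a)}$, $k_{M(a)}$ and the $\mathcal{A}_n(M(a))$ are generated over $M$, plus the regularity of residue field extensions of a model in equicharacteristic zero. Once that is in place, the rest is the standard forking calculus and a black-box appeal to the relevant domination theorem, and only cosmetic changes are needed for the $\mathcal{L}_{\ac}$ and residue-field-algebraically-closed variants.
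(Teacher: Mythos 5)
Your approach misses what the lemma actually is. The statement is over an \emph{arbitrary} parameter set $C$, with no maximality and no $NTP_{2}$ hypothesis, and the paper proves the nontrivial direction by a soft argument using only stable embeddedness and orthogonality of $\mathcal{A}$ and $\Gamma$ (Corollaries \ref{stL}, \ref{seac}, \ref{seracf}): if $\tp(\mathcal{A}(Ca)/Cb)$ forks over $C$, witnessed by $\phi(\bar x,b)\vdash\bigvee_i\psi_i(\bar x,d_i)$ with each $\psi_i$ dividing over $C$, then stable embeddedness lets you replace $\phi(\bar x,b)$ by a formula $\eta(\bar x,e)$ with $e\in\mathcal{A}(Cb)$ and each $\psi_i(\bar x,d_i)$ by $\epsilon_i(\bar x,f_i)$ with $f_i\in\mathcal{A}(Cd_i)$; transporting the dividing sequences by automorphisms over $C$ shows each $\epsilon_i(\bar x,f_i)$ still divides over $C$, so $\tp(\mathcal{A}(Ca)/\mathcal{A}(Cb))$ forks over $C$. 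No domination, no maximal models, no forking calculus beyond the definition is needed. Your plan instead reaches for the machinery of Theorem \ref{forking} (domination over a maximal model plus left transitivity), which is the \emph{harder} statement that uses the present lemma as an ingredient.

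Beyond being the wrong tool, several of your individual steps do not go through. (i) The reduction to a maximal model $M$ with $\tp(M/Cab)$ finitely satisfiable in $C$ is unjustified when $C$ is not a model, and the final ``descent to $C$'' needs base-change properties of forking that are unavailable. (ii) The step you yourself flag as the main obstacle --- extracting $R(Ma)\ind_M R(Mb)$, linear disjointness of the residue fields, etc.\ from the single hypothesis $R(Ca)\ind_C R(Cb)$ --- is the entire content of your argument and is not supplied; it is not a routine forking manipulation. (iii) The inference from $R(Ma)\ind_M R(Mb)$ and $R(Ma)\ind_{M R(Mb)}b$ to $R(Ma)\ind_M R(Mb)\,b$ is \emph{right} transitivity (the base and the right-hand side grow while the left-hand side is fixed); Fact \ref{left} is left transitivity (right-hand side fixed, left-hand side grows) and does not give this, and full transitivity and symmetry of forking are not available here --- $NTP_{2}$ gives forking $=$ dividing over models, not symmetry. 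You should discard the reduction entirely and argue directly with stable embeddedness as above.
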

\begin{proof}

We start by proving the first statement. The left to right direction is clear, because if $\mathcal{A}(Ca) \Gamma(Ca) \ind_{C} b$ then  $\mathcal{A}(Ca) \Gamma(Ca) \ind_{C} \acl(Cb)$, and $\mathcal{A}^{eq}(Cb) \Gamma^{eq}(Cb) \subseteq \acl(Cb)$. We proceed to prove the converse.\\
 Suppose that $\displaystyle{\mathcal{A}(Ca) \Gamma(Ca) \ind_{C} \mathcal{A}^{eq}(Cb) \Gamma^{eq}(Cb)}$, because $\mathcal{A}$ and $\Gamma$ are orthogonal to each other, this is equivalent to $\mathcal{A}(Ca) \ind_{C} \mathcal{A}^{eq}(Cb)$ and $\Gamma(Ca) \ind_{C} \Gamma^{eq}(Cb)$. By Corollary \ref{stL} $\Gamma$ and $\mathcal{A}$ are orthogonal to each other and are purely stably embedded, thus $\mathcal{A}(Ca) \Gamma(Ca) \ind_{C} b$ if and only if $\mathcal{A}(Ca) \ind_{C} b$ and $\Gamma(Ca) \ind_{C} b$. Hence, it is sufficient to prove that $\mathcal{A}(Ca) \ind_{C}b$ and $\Gamma(Ca) \ind_{C}b$.\\
\begin{claim} {$\mathcal{A}(Ca) \ind_{C}b$ and $\Gamma(Ca) \ind_{C} b$.}\end{claim}
We proceed by contradiction, and we assume that $\tp(\mathcal{A}(Ca)/ Cb)$ forks over $C$ to show that $\mathcal{A}(Ca) \not \ind_{C} \mathcal{A}^{eq}(Cb)$. We can find a formula $\phi(\bar{x},b) \in \tp(\mathcal{A}(Ca)/Cb)$, and finite set of formulas $\{ \psi_{i}(\bar{x},d_{i}) \ | \ i \leq n\}$ such that $\displaystyle{\phi(\bar{x},b) \vdash \bigvee_{i \leq l} \psi_{i}(\bar{x},d_{i})}$, where each formula $\psi_{i}(\bar{x},d_{i})$ divides over $C$.\\
As $\mathcal{A}$ is purely stably embedded, the subset of $\mathcal{A}^{n}$ defined by $\phi(\bar{x},b)$ is also defined by a formula $\eta(\bar{x}, e)$ where $e$ is a tuple of elements in $\mathcal{A}^{eq}(Cb)$. By a similar argument, the set defined by each formula $\psi_{i}(\bar{x},d_{i})$ is also defined by a formula $\epsilon_{i}(\bar{x}, f_{i})$ where $f_{i}$ is a tuple of elements in $\mathcal{A}^{eq}(Cd_{i})$. Because $\phi(\bar{x}, b)$,  $\eta(\bar{x}, e)$ define the same set, as $\psi_{i}(\bar{x},d_{i})$ and $\epsilon_{i}(\bar{x}, f_{i})$ do, then it is also the case that  $\displaystyle{\eta(\bar{x}, e) \vdash \bigvee_{i \leq l} \epsilon_{i}(\bar{x}, f_{i})}$. Since $\eta(\bar{x},e) \in \tp(\mathcal{A(Ca)}/ \mathcal{A}^{eq}(Cb))$ it is sufficient to argue that $\epsilon_{i}(\bar{x},f_{i})$ also divides over $C$.\\
Each formula $\psi_{i}(\bar{x}, d_{i})$ divides over $C$, so we can find an infinite sequence $\langle b_{j} \ | \ j < \omega \rangle$ in the type $\tp(d_{i}/C)$ such that $b_{0}=d_{i}$ and $\{ \psi_{i}(\bar{x}, b_{j}) \ | \ j < \omega \}$ is $m_{i}$-inconsistent. Let $\sigma_{j}$ be an automorphism of the monster model sending $b_{0}$ to $b_{j}$ and fixing $C$. Let $g_{j}= \sigma_{j}(f_{i})$, then $g_{0}=f_{i}$, $\langle g_{j} \ | \ j < \omega \rangle$ is in the type $\tp(f_{i}/C)$. As a result, $\{ \epsilon_{i}(\bar{x}, g_{j}) \ | \ j < \omega\}$ is also $m_{i}$-inconsistent, because $\psi_{i}(\bar{x}, b_{j})$ and $\epsilon_{i}(\bar{x}, g_{j})$ define the same subset of $\mathcal{A}^{n}$. Consequently, each $\epsilon_{i}(\bar{x},f_{i})$ divides over $C$, so $\mathcal{A}(Ca) \not \ind_{C} \mathcal{A}^{eq}(Cb)$. We conclude that if $\mathcal{A}(Ca) \ind_{C} \mathcal{A}^{eq}(Cb)$ then $\mathcal{A}(Ca) \ind_{C} b$. Likewise, one can show that if $\Gamma(Ca) \ind_{C} \Gamma^{eq}(Cb)$ then $\Gamma(Ca) \ind_{C} b$. This concludes the proof of the right to left direction. \\
Likewise, we can conclude similarly the second and the third statement. In fact, the proof only requires that the residue field and the value group are orthogonal to each other and are purely stably embedded. This is guaranteed by Corollary \ref{seac} and  \ref{seracf} respectively. 
\end{proof}
\begin{theorem} \label{forking} Let $C$ be some maximal model and assume that the residue field is $NTP_{2}$. 
\begin{enumerate}
\item In the $\mathcal{L}'$-language, $a \ind_{C}b \ \text{if and only if} \ \mathcal{A}(Ca) \Gamma(Ca) \ind_{C} \mathcal{A}^{eq}(Cb) \Gamma^{eq}(Cb)$.
\item In the $\mathcal{L}_{\ac}'$-language, $a \ind_{C} b \ \text{if and only if} \  k(Ca) \Gamma(Ca) \ind_{C} k^{eq}(Cb) \Gamma^{eq}(Cb)$.
 \end{enumerate}
\end{theorem}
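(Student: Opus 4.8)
The plan is to reduce, via Lemma \ref{forkingresval}, both biconditionals to the single statement that, for tuples $a,b$ in the home sort, $a \ind_C b$ if and only if $\mathcal{A}(Ca)\Gamma(Ca) \ind_C b$ (with the residue field $k$ in place of the residue sorts $\mathcal{A}$ throughout for the $\mathcal{L}_{\ac}$-statement). I would treat the $\mathcal{L}$-case, the $\mathcal{L}_{\ac}$-case being verbatim with Corollary \ref{domvalresac} and Lemma \ref{forkingresval}(2) replacing Corollary \ref{domvalresL} and Lemma \ref{forkingresval}(1). Recall that, since the residue field is $NTP_2$, so is $T$ (by the results of Chernikov and Touchard cited above), hence forking and dividing agree over the model $C$. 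The implication from left to right is immediate from monotonicity of nonforking, since $\mathcal{A}(Ca)\Gamma(Ca) \subseteq \dcl(Ca)$, so the content is the converse.

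Assume $\mathcal{A}(Ca)\Gamma(Ca)\ind_C b$, and let $L$, $M$ be the $\mathcal{L}$-substructures generated by $Ca$ and $Cb$, with residue and value data $\mathcal{A}_L\Gamma_L$, $\mathcal{A}_M\Gamma_M$. Using that $\mathcal{A}$ and $\Gamma$ are stably embedded and orthogonal over $C$ (Corollary \ref{stL}) one extracts $\mathcal{A}_L\Gamma_L\ind_C M$, $\mathcal{A}_L\Gamma_L\ind_C\mathcal{A}_M\Gamma_M$, and (as in Remark \ref{f2}) $k_M\ind^{alg}_{k_C}k_L$, $\Gamma_M\ind^s_{\Gamma_C}\Gamma_L$; together with Fact \ref{maximal} (good separated bases over the maximal $C$) the hypotheses of Theorem \ref{domination1} hold. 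By left transitivity of nonforking (Fact \ref{left}), from $\mathcal{A}(Ca)\Gamma(Ca)\ind_C b$ it suffices to prove $a\ind_{C\mathcal{A}_L\Gamma_L}b$, hence it suffices to prove $L\ind_{C\mathcal{A}_L\Gamma_L}M$. Re-running the proof of Theorem \ref{domination1} with the partial elementary map required in addition to fix $\mathcal{A}_L\Gamma_L$ — legitimate, since $\mathcal{A}_L\Gamma_L\subseteq\dcl(L)$, so the automorphism produced still fixes $\mathcal{A}_L\Gamma_L$ — yields $\tp(L/C\mathcal{A}_L\Gamma_L\mathcal{A}_M\Gamma_M)\vdash\tp(L/C\mathcal{A}_L\Gamma_L M)$. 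I would then invoke the elementary fact that if $p=\tp(\cdot/C_0 X)\vdash q=\tp(\cdot/C_0 Y)$ with $C_0\subseteq X\subseteq Y$, then $q$ does not fork over $C_0$ whenever $p$ does not (forking of a type means implying a finite disjunction of formulas dividing over $C_0$, and since $p\vdash q$ any such disjunction implied by $q$ is already implied by $p$); this reduces $L\ind_{C\mathcal{A}_L\Gamma_L}M$ to $L\ind_{C\mathcal{A}_L\Gamma_L}\mathcal{A}_M\Gamma_M$.

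The remaining point, $L\ind_{C\mathcal{A}_L\Gamma_L}\mathcal{A}_M\Gamma_M$, is the heart of the argument, and I expect it to be the main obstacle. The idea is to use the relative quantifier elimination (Theorem \ref{QE}): every formula of $\tp(L/C\mathcal{A}_L\Gamma_L\mathcal{A}_M\Gamma_M)$ is a Boolean combination of field formulas with parameters from $C$ and of formulas in the sorts $\mathcal{A},\Gamma$ applied to $v$ and the $\res^n$ of home-sort terms over $C$; evaluated at $L$ these terms take values in $\mathcal{A}_L\Gamma_L$, which already lies in the base $C\mathcal{A}_L\Gamma_L$. Hence $\tp(L/C\mathcal{A}_L\Gamma_L\mathcal{A}_M\Gamma_M)$ is the unique extension of $\tp(L/C\mathcal{A}_L\Gamma_L)$ compatible with the (fixed) $\mathcal{A},\Gamma$-type of $\mathcal{A}_L\Gamma_L$ over $\mathcal{A}_M\Gamma_M$, it is definable over $C\mathcal{A}_L\Gamma_L$, and therefore it does not fork over $C\mathcal{A}_L\Gamma_L$; combined with the previous paragraph this gives $a\ind_C b$, and after symmetrising in $a$ and $b$ (automatic once the equivalence is established, since the right-hand side is symmetric) both biconditionals of the theorem follow. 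What makes this delicate is that the domination statements of Section \ref{dominationvalres} are over the maximal model $C$, whereas here one needs to work over the larger base $C\mathcal{A}_L\Gamma_L$, so one must re-examine the proof of Theorem \ref{domination1} and lean on both the relative quantifier elimination and the maximal completeness of $C$, which is precisely what forces $\tp(L/C)$ to be rigidly controlled by its residue and value data.
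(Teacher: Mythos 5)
Your proposal is correct and follows the paper's skeleton exactly up to the key sub-claim: both arguments use Lemma \ref{forkingresval} to pass between independence from $b$ and independence from $\mathcal{A}(Cb)\Gamma(Cb)$, then left transitivity (Fact \ref{left}) to reduce everything to showing $a \ind_{C\mathcal{A}(Ca)\Gamma(Ca)} b$, with forking equal to dividing supplied by the $NTP_2$ hypothesis. Where you diverge is in the proof of that sub-claim. The paper takes a $C\mathcal{A}(Ca)\Gamma(Ca)$-indiscernible sequence $\langle b_i\rangle$ of conjugates of $b$ and uses Theorem \ref{domination1} directly to produce, for each $i$, an automorphism fixing $Ca$ and carrying $b$ to $b_i$, so that $a$ itself realizes every conjugate of $\tp(a/C\mathcal{A}(Ca)\Gamma(Ca)b)$ and no formula divides. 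You instead chain two entailments, $\tp(L/C\mathcal{A}_L\Gamma_L)\vdash\tp(L/C\mathcal{A}_L\Gamma_L\mathcal{A}_M\Gamma_M)\vdash\tp(L/C\mathcal{A}_L\Gamma_L M)$, the second being the domination theorem re-run over the enlarged base (legitimate, as you say, since the automorphism it produces restricts to $\sigma$ on $L$ and hence fixes $\mathcal{A}_L\Gamma_L$ pointwise), and the first being the relative-quantifier-elimination observation that all $\mathcal{A},\Gamma$-data of terms in $L$ over $C$ lands in $\mathcal{A}_L\Gamma_L$, which sits in the base. A type implied by its restriction to the base cannot divide over that base, which is the clean justification here; your appeal to "definability over $C\mathcal{A}_L\Gamma_L$" is not quite the right phrasing, but the uniqueness-of-extension statement you give amounts to the same thing. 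Your route is slightly longer but has the virtue of isolating the unconditional fact that $\mathcal{A},\Gamma$-parameters alone can never make $\tp(L/C\mathcal{A}_L\Gamma_L)$ fork, so that the independence hypotheses enter only through the domination step over field parameters; the paper's route is more direct once domination is available. One caveat common to both arguments: forking equals dividing is invoked over the base $C\mathcal{A}(Ca)\Gamma(Ca)$, which is not a model, so strictly speaking one needs that sets containing a model are extension bases in $NTP_2$ theories rather than the bare "over models" statement; this is not a defect of your proposal relative to the paper, since the paper's Claim relies on the same point.
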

\begin{proof} We start proving the first statement, the left to right direction is clear. We assume that \\
$\mathcal{A}(Ca) \Gamma(Ca) \ind_{C} \mathcal{A}^{eq}(Cb) \Gamma^{eq}(Cb)$. By Corollary \ref{stL} $\mathcal{A}$ and $\Gamma$ are purely stably embedded and orthogonal to each other, so this is equivalent to $\mathcal{A}(Ca) \ind_{C} \mathcal{A}^{eq}(Cb)$ and $\Gamma(Ca) \ind_{C} \Gamma^{eq}(Cb)$. Because $\mathcal{A}(Ca) \ind_{C} \mathcal{A}^{eq}(Cb)$, then $k(Ca) \ind_{k_{C}}^{alg} k(Cb)$. As $C$ is a model and the residue field is of characteristic zero,  $k(Ca)$ is a regular extension of $k_{C}$ so we can apply Fact \ref{mago} to conclude that $k(Ca)$ and $k(Cb)$ are linearly disjoint over $k_{C}$. Because $\Gamma(Ca) \ind_{C} (Cb)$, then $\Gamma(Ca) \ind_{\Gamma_{C}}^{s} \Gamma(Cb)$, as $\Gamma$ is a stably embedded sort and we are considering the reduct to the language of abelian groups.\\
\begin{claim}\label{arreglo} $a \ind_{C\mathcal{A}(Ca)\Gamma(Ca)}^{d} b$.\end{claim}
\begin{proof}
Let $p(x, C\mathcal{A}(Ca)\Gamma(Ca))= \tp(a/ C\mathcal{A}(Ca)\Gamma(Ca),b)$. It is sufficient to argue that no formula $\phi(\bar{x},b) \in p(x, C\mathcal{A}(Ca)\Gamma(Ca),b)$ divides over  $C\mathcal{A}(Ca)\Gamma(Ca)$. Let
 $\langle b_{i} \ | \ i \in \omega \rangle$ a $C\mathcal{A}(Ca)\Gamma(Ca)$-indiscernible sequence in the type $\tp(b/C\mathcal{A}(Ca)\Gamma(Ca))$. Let $\sigma_{i}$ be an automorphism of $\mathfrak{C}$ fixing $C\mathcal{A}(Ca)\Gamma(Ca)$ sending $b$ to $b_{i}$. By Theorem \ref{domination1} we can find an automorphism $\tau_{i}$ of $\mathfrak{C}$ which is the identity on $\dcl(Ca)$ and whose restriction to $\dcl(Cb)$ coincides with $\sigma_{i}$. In particular,
 \begin{equation*}
\vDash \phi(a,b) \leftrightarrow \vDash \phi(\tau_{i}(a), \tau_{i}(b)) \leftrightarrow \vDash \phi(a,b_{i}) \ \text{for any $i < \omega$.}
 \end{equation*}
 so $\{ \phi(x,b_{i}) \ | \ i < \omega\}$ is consistent, and we conclude that $\phi(x,b)$ does not divide over $C\mathcal{A}(Ca)\Gamma(Ca)$ as required.
 \end{proof}
 Combining Claim \ref{arreglo} with Lemma \ref{forkingresval} we have that $a \ind_{C\mathcal{A}(Ca)\Gamma(Ca)}^{d} b$ and $\mathcal{A}(Ca) \Gamma(Ca) \ind_{C}^{d} b$ so we can apply Fact \ref{left} to conclude that $a \mathcal{A}(Ca) \Gamma(Ca) \ind_{C}^{d} b$. As forking is equal to dividing over models in $NTP_{2}$ theories we have $a \mathcal{A}(Ca) \Gamma(Ca) \ind_{C} b$.
 Because $\mathcal{A}(Ca) \Gamma(Ca) \subseteq \acl(Ca)$, this is equivalent to $a \ind_{C}b$.\\
 Likewise, we can conclude the second statement for the $\mathcal{L}_{\ac}'$-language, using Theorem \ref{domac} and Corollary \ref{seac} instead. We observe that there is no need to work with the extension $\mathcal{L}_{\ac}^{*}$, as the independence assumption over the value group implies that $\Gamma(Ca) \cap \Gamma(Cb) \subseteq \Gamma(C)=\Gamma_{C}$, because $C$ is definably closed. 
\end{proof}
\begin{proposition} Let $T$ be some complete extension of the $\mathcal{L}_{dp}^{*}$- first order theory of henselian valued fields of equicharacteristic zero with residue field algebraically closed and whose value group is dp-minimal. Let $\mathfrak{C}$ be its monster model and $C\subseteq \mathfrak{C}$ be a maximal field. Let $a,b \in \mathfrak{C}$ and suppose that $k(Ca)$ is a regular extension of $k_{C}$ and $\Gamma(Ca)/\Gamma_{C}$ is torsion free. We have that $a \ind_{C}b$ if and only if $k(Ca)\Gamma(Cb) \ind_{C} k(Cb)\hat{\Gamma}(Cb)$. Where $\hat{\Gamma}= \Gamma \cup \{ \Gamma/\Delta \ | \ \Delta \ \text{is a convex subgroup}\}$. 
\end{proposition}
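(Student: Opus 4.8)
The plan is to run the proof of Theorem \ref{forking}(1) essentially verbatim, replacing each $\mathcal{L}$-ingredient by its counterpart for $\mathcal{L}_{\val}^{*}$ with algebraically closed residue field: Corollary \ref{stL} is replaced by Corollary \ref{seracf} (here too $\mathbf{k}$ and $\Gamma$ are stably embedded and orthogonal to each other), Theorem \ref{domination1} by the domination statement Corollary \ref{racf}, and Lemma \ref{forkingresval}(1) by Lemma \ref{forkingresval}(3). I take $a,b$ to be main-field tuples and read the right-hand side of the equivalence as $k(Ca)\Gamma(Ca)\ind_{C}k(Cb)\Gamma(Cb)$. Left to right is immediate: forking independence is invariant under algebraic closure, while $k(Ca)\Gamma(Ca)\subseteq\acl(Ca)$ and $k(Cb)\Gamma(Cb)\subseteq\acl(Cb)$.

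For the converse I assume $k(Ca)\Gamma(Ca)\ind_{C}k(Cb)\Gamma(Cb)$ and extract the classical ingredients exactly as in the proof of Corollary \ref{racf}. By Corollary \ref{seracf} this splits as $k(Ca)\ind_{C}k(Cb)$ together with $\Gamma(Ca)\ind_{C}\Gamma(Cb)$. Since forking independence in ACF is algebraic independence and $k(Ca)$ is regular over $k_{C}$, Fact \ref{mago} gives linear disjointness of $k(Ca)$ and $k(Cb)$ over $k_{C}$; restricting to $\mathcal{L}_{AG}$ gives $\Gamma(Ca)\ind_{\Gamma_{C}}^{s}\Gamma(Cb)$, so by Fact \ref{intersection} $\Gamma(Ca)\cap\Gamma(Cb)\subseteq\dcl(\Gamma_{C})$, and with Corollary \ref{linear}/Fact \ref{dcllinear} and torsion-freeness of $\Gamma(Ca)/\Gamma_{C}$ this forces $\Gamma(Ca)\cap\Gamma(Cb)=\Gamma_{C}$. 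Since $C$ is maximally complete, Fact \ref{maximal} supplies the good separated basis property over $C$, so the hypotheses feeding Corollary \ref{racf} (and Propositions \ref{isoextended}, \ref{isoextended2}) are available for the pair of valued subfields generated by $Ca$ and $Cb$ over $C$.

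The heart of the argument is the claim that $a\ind_{Ck(Ca)\Gamma(Ca)}b$. Because the residue field is algebraically closed it is stable, so by \cite{Touchard} the valued field is $NTP_{2}$ and forking equals dividing over the base; thus it suffices to show that no $\phi(x,b)\in\tp(a/Ck(Ca)\Gamma(Ca),b)$ divides over $Ck(Ca)\Gamma(Ca)$. Given a $Ck(Ca)\Gamma(Ca)$-indiscernible sequence $\langle b_{i}\mid i<\omega\rangle$ in $\tp(b/Ck(Ca)\Gamma(Ca))$, I pick $\sigma_{i}$ fixing $Ck(Ca)\Gamma(Ca)$ with $\sigma_{i}(b)=b_{i}$ and---as in Theorem \ref{forking}---use domination to promote $\sigma_{i}$ to an automorphism of $\mathfrak{C}$ fixing $Ca$ and sending $b$ to $b_{i}$; hence $b_{i}\equiv_{Ca}b$, so $a$ itself realizes $\{\phi(x,b_{i})\mid i<\omega\}$ and $\phi(x,b)$ does not divide. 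With the claim proved, Lemma \ref{forkingresval}(3) turns $k(Ca)\Gamma(Ca)\ind_{C}k(Cb)\Gamma(Cb)$ into $k(Ca)\Gamma(Ca)\ind_{C}b$, left transitivity (Fact \ref{left}) gives $a\,k(Ca)\Gamma(Ca)\ind_{C}b$, and since $k(Ca)\Gamma(Ca)\subseteq\acl(Ca)$ this is $a\ind_{C}b$.

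The hard part is the use of domination inside the claim. Corollary \ref{racf} as literally stated dominates the type of the $Ca$-generated field over $Ck_{M}\Gamma_{M}$, whereas here I need its ``symmetric'' face: an $\mathcal{L}_{\val}^{*}$-isomorphism of compositums that fixes $Ck(Ca)\Gamma(Ca)$ and sends the $Cb$-generated field to the $Cb_{i}$-generated one should extend, fixing the $Ca$-generated field, to an automorphism of $\mathfrak{C}$. This should cause no real difficulty, because the genuinely symmetric hypotheses (linear disjointness of the residue fields over $k_{C}$, $\Gamma(Ca)\cap\Gamma(Cb)=\Gamma_{C}$, and the good separated basis property over the maximal $C$) have already been secured, and these are precisely what Proposition \ref{isoextended2} needs to extend such an isomorphism; Theorem \ref{QERACF} then promotes it to an elementary map, the restriction to the value group being a partial elementary map because the value groups of the two subfields generate that of their compositum and $\sigma_{i}$ fixes $\Gamma(Ca)$. (This last point is the reason the indiscernible sequence must be taken over $Ck(Ca)\Gamma(Ca)$ and not merely over $C$.)
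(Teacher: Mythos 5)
Your overall route is the same as the paper's: the author also proves this by rerunning the argument of Theorem \ref{forking}, substituting Proposition \ref{isoextended2} for the $\mathcal{L}$-domination input, deriving $\Gamma(Ca)\cap\Gamma(Cb)=\Gamma_{C}$ from $\dcl(\Gamma_{C})=(\mathbb{Q}\otimes\Gamma_{C})\cap\Gamma_{\mathfrak{C}}$ plus torsion-freeness, and deriving linear disjointness of the residue fields from regularity. Your reading of the right-hand side as $k(Ca)\Gamma(Ca)\ind_{C}k(Cb)\Gamma(Cb)$ (correcting the typo) and your discussion of the ``symmetric face'' of the isomorphism-extension step also match what the paper does.

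There is, however, one genuine gap: your justification of ``forking equals dividing'' over the base $Ck(Ca)\Gamma(Ca)$. The Chernikov--Kaplan theorem for $NTP_{2}$ theories gives forking $=$ dividing over \emph{models} (more generally over extension bases), and in Theorem \ref{forking} the base $C$ is a maximal \emph{model}, so that citation works there. In the present proposition $C$ is only a maximal \emph{field}, and $Ck(Ca)\Gamma(Ca)$ is not given to be (or to contain) a model, so $NTP_{2}$ alone does not license passing from ``no formula of $\tp(a/Ck(Ca)\Gamma(Ca)b)$ divides'' to ``the type does not fork''. This is exactly the point where the paper switches tools: it invokes the Cotter--Starchenko result \cite[remarks preceding Proposition 2.6 together with Corollary 5.5]{CS} on forking and dividing for elements of the main sort in VC-minimal theories, which applies over arbitrary bases. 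Your argument is repaired by replacing the $NTP_{2}$ citation with that one; as written, the step does not go through.
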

\begin{proof}
The proof follows by a very similar argument as in Theorem \ref{forking}, applying Proposition \ref{isoextended2} instead. In fact, $\Gamma_{M} \cap \Gamma_{L} \subseteq \dcl(\Gamma_{C})= (\mathbb{Q} \otimes \Gamma_{C}) \cap \Gamma_{\mathfrak{C}}$ and as $\Gamma(Ca)/\Gamma_{C}$ is torsion free we have that $\Gamma_{M} \cap \Gamma_{L}=\Gamma_{C}$. Also, the independence in the residue field together with the assumption of $k(Ca)$ being a regular extension of $k_{C}$ guarantees that $k(Ca)$ and $k(Cb)$ are linearly disjoint over $k_{C}$. We can apply the equivalence between forking and dividing over sets of parameters in the main sort by a result of Cotter and Starchenko \cite[the remarks preceding Proposition 2.6 together with Corollary 5.6]{CS}. \\
\end{proof}

\section{Domination by the internal sorts to the residue field in the language $\mathcal{L}_{RV}$}\label{domint}
In this section we investigate domination of a field by the sorts internal to the residue field over the value group in the $\mathcal{L}_{RV}$- languange. We start by fixing some notation.
\begin{notation} Given a field $F$ we denote as $F^{alg}$ its field algebraic closure.
\end{notation}
Let $K$ be an $\mathcal{L}_{RV}$-structure, we will write $k_{K}$ to denote the residue field of $K$, $\Gamma_{K}$ to denote its value group and $RV_{K}=\{ rv(k) \ | \ k \in K\}$.\\ 
Any henselian valued field $K$ can be naturally embedded into a model of $ACVF$, in fact we can simply take the algebraic closure of $K$ with the unique extension of $v$ to $K^{alg}$. We denote by $\mathcal{O}$ the valuation ring of $K$ and $\mathcal{M}$ its prime ideal, while $\mathcal{O}^{alg}$ is the valuation ring of $K^{alg}$ and $\mathcal{M}^{alg}$ indicates its maximal ideal. Hence, the sort $RV_{K}$ can be naturally embedded into $RV_{K^{alg}}$, by sending the class $x(1+\mathcal{M})$ to $x(1+\mathcal{M}^{alg})$. Likewise, there is a natural embedding from the residue field of $K$ into the residue field of $K^{alg}$, where for $x \in \mathcal{O}$ we send the class $x+\mathcal{M}$ to $x+\mathcal{M}^{alg}$. \\

In \cite[Section 3.1]{HHM2} Haskell, Hrushovski and Macpherson introduced the well known \emph{geometric language $\mathcal{L}_{\mathcal{G}}$}, in which $ACVF$ eliminates imaginaries (see \cite[Theorem 1.0.1]{HHM2}). Through this section we will equip any model of $ACVF$ with the language $\mathcal{L}_{ACVF}$ extending the language of $\mathcal{L}_{\mathcal{G}}$ and a $RV$-sort.
\begin{notation} Let $T$ be the $\mathcal{L}_{RV}$-theory of henselian valued fields of equicharacteristic zero. We will denote by $\mathfrak{C}$ its monster model, which can be embedded into the monster model $\mathfrak{U}$ of $ACVF$. Through this section we will work in both theories, so we emphasize the notation that we will be using to distinguish both theories. We will simply denote as $\dcl$, $\acl$, or $\tp(A/C)$ the definable closure, algebraic closure or the type in the language $\mathcal{L}_{RV}$. While we emphasize that $\dcl_{ACVF}, \acl_{ACVF}$ or $\tp_{ACVF}$ indicate the definable closure, algebraic closure or the type in the geometric language. We recall our notation, given $S$ a stably embedded sort in the $\mathcal{L}_{RV}$- theory and $A \subseteq \mathfrak{C}$ a set of parameters we denote as $S(A)=S \cap \dcl(A)$, while if $S$ is a stably embedded sort in $ACVF$ we indicate by $S_{ACVF}(A)= S \cap \dcl_{ACVF}(A)$.
\end{notation}
\begin{definition}\label{internal}
\begin{enumerate}
\item A definable set $E$ is said to be \emph{internal} to a definable set $D$ if there is some finite set of parameters $F$ such that $E \subseteq \dcl^{eq}(F \cup D)$.
 \item A family of definable sets $\{ E_{i}\}_{i \in I}$ is said to be internal to a definable set $D$ if for each $i \in I$ we have that $E_{i}$ is internal to $D$.
\end{enumerate}
\end{definition}

\begin{definition} Let $S\subseteq \Gamma_{\mathfrak{C}}$ and $M$ be a substructure of $\mathfrak{C}$ such that $S \subseteq \Gamma_{M}$. We write 
\begin{equation*}
\kInt_{S}^{M}=\{ k_{M}\} \cup \{ RV_{M} \cap v_{RV}^{-1}(\gamma) \ | \ \gamma \in S \}.
\end{equation*}
\end{definition}
For each $\gamma \in S$, $RV_{\mathfrak{C}} \cap v_{RV}^{-1}(\gamma)$ is internal to the residue field and the parameters required to witness the internality lie in $RV_{\mathfrak{C}} \cap v_{RV}^{-1}(\gamma)$. Indeed, given $b(1+\mathcal{M}) \in RV_{\mathfrak{C}} \cap v_{RV}^{-1}(\gamma)$ the map $\displaystyle{f: \mathcal{O}^{\times}/(1+\mathcal{M}) \rightarrow RV \cap v_{RV}^{-1}(\gamma)}$ sending the element $x(1+\mathcal{M})$ to $\big(b(1+\mathcal{M})\big) \cdot \big( x (1+\mathcal{M})\big)= bx (1+\mathcal{M})$ is a bijection. In particular, for each $\gamma \in S$, $RV_{\mathfrak{C}} \cap v_{RV}^{-1}(\gamma)$ is stably embedded and so it is $\kInt_{S}^{\mathfrak{C}}$.\\
Similarly, we can consider the structures $RV_{\mathfrak{U}} \cap v_{RV}^{-1}(\gamma)$, $\kInt_{S}^{\mathfrak{U}}$ and obtain the same results in this setting.\\

In the case of $ACVF$, let $C\subseteq \mathfrak{U}$ a set of parameters and $L$ a substructure of $\mathfrak{U}$, then  $\acl_{ACVF}(\kInt_{\Gamma_{L}}^{M})$
is precisely the part of $\mathfrak{U}^{eq}$ which is internal to the residue field and contained in sets definable over $C$ and $\Gamma_{ACVF}(L)$ (see \cite[Lemma 12.9]{HHM}). In $ACVF$ the residue field is an algebraically closed field, so it has a strongly minimal theory and forking independence coincides with algebraic independence in the field theoretic sense. In \cite[Lemma 2.6.2 ]{HHM2}, Haskell, Hrushovski and Macpherson characterize the definable sets that are internal to the residue field precisely as those that are stable and stably embedded, or more precisely as those that have finite Morley rank with the induced structure. In particular, in $ACVF$ the multi-sorted structure $\acl_{ACVF}(\kInt_{\Gamma_{L}}^{M})$ is naturally equipped with a well-behaved notion of independence, which is simply forking independence in stable theories. \\
 
We will not investigate this in the more general setting of henselian valued fields of equicharacteristic zero. Instead we use the fact that any henselian valued field of equicharacteristic zero can be naturally embedded into a model of $ACVF$ and we use the well-behaved notion of independence induced there, which in our setting corresponds to independence for the quantifier free and stable formulas. 
\begin{definition} Let $L$ and $M$ be substructures of $\mathfrak{C}$ such that $\dcl(L)=L$  and $\Gamma_{L} \subseteq \Gamma_{M}$. We consider these structures embedded in the monster model $\mathfrak{U}$ of $ACVF$. Suppose that $\Gamma_{L} \subseteq \Gamma_{M}$ and let $C \subseteq L \cap M$ be a common valued field. We say that $\kInt^{L}_{\Gamma_{L}} \ind_{C\Gamma_{L}}^{qfs} \kInt_{\Gamma_{L}}^{M}$ in $\mathfrak{C}$ if and only if $\acl_{ACVF}(\kInt_{\Gamma_{L}}^{L}) \ind_{C\Gamma_{L}} \acl_{ACVF}(\kInt_{\Gamma_{L}}^{M})$ in $\mathfrak{U}$. 
\end{definition}



Our next goal is showing that types over maximally complete bases are dominated by the sorts internal to the residue field, to achieve this final milestone we will need Lemma \ref{indepequiv}, which generalizes  \cite[Lemma 12.9 and 12.10]{HHM}, both obtained for algebraically closed substructures.\\
 The following is a well-known fact for valued field extensions, we use \cite{dries} as a reference. 
\begin{fact}\label{inequality} Let $C \subseteq L$ be a valued field extension, where $\mathcal{O}_{C}$ is the valuation ring of $C$ and $\mathcal{O}_{L}$ is the valuation ring of $L$. Let $(b_{i} \ | \ i \in \mathcal{I})$ be a sequence of elements of $\mathcal{O}_{L}^{\times}$ such that $\res(b_{i})$ in $\mathbf{k}_{L}$ is algebraically independent over $\mathbf{k}_{C}$. And let $(a_{j} \ | \ j \in J)$ be a family of elements of $L^{\times}$ such that the family $\big(v(a_{j}) \ | \ j \in J\big)$ in $\mathbb{Q} \otimes \Gamma_{L}$ is $\mathbb{Q}$-linearly independent over $\mathbb{Q} \otimes \Gamma_{C}$. Assume that $I \cap J=\emptyset$ and define $d_{k} \in L$ for $k \in I \cup J$ by $d_{i}={b}_{i}$ for $i \in I$ and $d_{j}=a_{j}$ for $j \in J$. Then:
\begin{enumerate}
\item $\big( d_{k} \ | \ k \in I \cup J \big)$ in $L$ is algebraically independent over $C$, and 
\item if $C \subseteq L$ is an extension of finite transcendence degree, then $\trdeg(\mathbf{k}_{L}/\mathbf{k}_{C})+\dim_{\mathbb{Q}}( \mathbb{Q} \otimes \Gamma_{L}/ \mathbb{Q} \otimes \Gamma_{C}) \leq \trdeg(L/C)$.
\end{enumerate}
The second statement is known as the \emph{Zariski-Abhyankar Inequality} and it is a direct consequence of the first one. \\
\end{fact}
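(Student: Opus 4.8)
The plan is to prove part (1) first and then read off part (2) from it. Since algebraic independence of a family is witnessed by its finite subfamilies, I would reduce part (1) to finite families $b_1,\dots,b_m \in \mathcal{O}_L^{\times}$ with $\res(b_1),\dots,\res(b_m)$ algebraically independent over $k_C$, and $a_1,\dots,a_n \in L^{\times}$ with $v(a_1),\dots,v(a_n)$ linearly independent over $\mathbb{Q}\otimes\Gamma_C$ inside $\mathbb{Q}\otimes\Gamma_L$. Then I would argue by contradiction: supposing $P(\bar b,\bar a)=0$ for some nonzero $P\in C[X_1,\dots,X_m,Y_1,\dots,Y_n]$, I would compute $v(P(\bar b,\bar a))$ and show it lies in $\Gamma_L$ (in particular, is not $\infty$), a contradiction.

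The computation proceeds in two steps. First, write $P=\sum_{\nu}Q_\nu(\bar X)\,\bar Y^{\nu}$, the sum over the finitely many exponent vectors $\nu$ with $Q_\nu\in C[\bar X]$ nonzero. I claim $v(Q_\nu(\bar b))\in\Gamma_C$ for each such $\nu$: letting $\gamma_\nu=\min\{v(c):c\text{ a nonzero coefficient of }Q_\nu\}$ and $c^{*}$ a coefficient realizing it, the polynomial $Q_\nu(\bar b)/c^{*}=\sum_\mu (c_{\mu\nu}/c^{*})\,\bar b^{\mu}$ has all coefficients in $\mathcal{O}_C$ and, because each $b_i$ is a unit, it reduces modulo $\mathcal{M}_L$ to a polynomial over $k_C$ in the $\res(b_i)$ that is not identically zero (the term whose coefficient was $c^{*}$ now has coefficient $1$); algebraic independence of the $\res(b_i)$ over $k_C$ then forces this residue to be nonzero, so $Q_\nu(\bar b)/c^{*}\in\mathcal{O}_L^{\times}$ and $v(Q_\nu(\bar b))=v(c^{*})=\gamma_\nu\in\Gamma_C$. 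Second, the valuations $v\big(Q_\nu(\bar b)\,\bar a^{\nu}\big)=\gamma_\nu+\sum_j\nu_j v(a_j)$ lie in pairwise distinct cosets of $\Gamma_C$ in $\Gamma_L$: if two of them agreed, for $\nu\neq\nu'$, then $\sum_j(\nu_j-\nu'_j)v(a_j)=\gamma_{\nu'}-\gamma_\nu\in\Gamma_C$ would be a nontrivial integer relation among the $v(a_j)$ modulo $\mathbb{Q}\otimes\Gamma_C$, contradicting their $\mathbb{Q}$-linear independence. Hence by the ultrametric inequality the terms $Q_\nu(\bar b)\,\bar a^{\nu}$ cannot cancel, so $v(P(\bar b,\bar a))=\min_\nu\big(\gamma_\nu+\sum_j\nu_j v(a_j)\big)\in\Gamma_L$, the desired contradiction.

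For part (2), suppose $t:=\trdeg(L/C)<\infty$. Part (1) already shows that any family of elements of $\mathcal{O}_L^{\times}$ with residues algebraically independent over $k_C$, together with any family of elements of $L^{\times}$ with values $\mathbb{Q}$-linearly independent over $\mathbb{Q}\otimes\Gamma_C$, has total size at most $t$; in particular $\trdeg(k_L/k_C)$ and $\dim_{\mathbb{Q}}(\mathbb{Q}\otimes\Gamma_L/\mathbb{Q}\otimes\Gamma_C)$ are both finite. Picking $b_1,\dots,b_p\in\mathcal{O}_L^{\times}$ lifting a transcendence basis of $k_L$ over $k_C$ and $a_1,\dots,a_q\in L^{\times}$ with $v(a_1),\dots,v(a_q)$ a $\mathbb{Q}$-basis of $\mathbb{Q}\otimes\Gamma_L$ over $\mathbb{Q}\otimes\Gamma_C$, part (1) makes $b_1,\dots,b_p,a_1,\dots,a_q$ algebraically independent over $C$, so $\trdeg(k_L/k_C)+\dim_{\mathbb{Q}}(\mathbb{Q}\otimes\Gamma_L/\mathbb{Q}\otimes\Gamma_C)=p+q\leq t$.

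The main obstacle is the first step of the computation, namely verifying $v(Q_\nu(\bar b))\in\Gamma_C$; this is the only place where the hypothesis on residues enters, and it is essential to normalize by a coefficient of minimal valuation before reducing modulo $\mathcal{M}_L$, so that the reduction is a genuine nonzero polynomial to which algebraic independence of the $\res(b_i)$ can be applied. The remaining pieces — the distinct-coset argument, the ultrametric estimate, and the reduction from infinite to finite families — are routine.
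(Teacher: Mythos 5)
Your proof is correct. The paper gives no argument of its own for this fact --- it simply cites van den Dries (Lemma 3.24 and Corollary 3.25 of \cite{dries}) --- and your argument (normalizing each $Q_\nu$ by a coefficient of minimal valuation so that $v(Q_\nu(\bar b))\in\Gamma_C$ via algebraic independence of the residues, then separating the monomials in $\bar a$ using the $\mathbb{Q}$-linear independence of the $v(a_j)$ modulo $\mathbb{Q}\otimes\Gamma_C$, and deducing (2) from (1) by choosing lifts of a transcendence basis and of a $\mathbb{Q}$-basis) is precisely the standard proof given in that reference.
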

\begin{proof}
This is \cite[Lemma 3.24 and Corollary 3.25]{dries}. 
\end{proof}

\begin{fact}\label{dcl} Let $L$ be a substructure of $\mathfrak{U}$, then $\Gamma_{ACVF}(L)=\mathbb{Q} \otimes \Gamma_{L}$ and $k_{ACVF}(L)\subseteq \acl_{ACVF}(L) \cap \mathbf{k}= k_{L}^{alg}$.
\end{fact}
\begin{proof} Let $L$ be a substructure of $\mathfrak{U}$ then $L \subseteq L^{alg} \subseteq \mathfrak{U}$ and $L^{alg} \vDash ACVF$. In particular, $\Gamma_{ACVF}(L) \subseteq \Gamma_{ACVF}(L^{alg})$. It is therefore sufficient to argue that $\Gamma_{ACVF}(L^{alg})=\mathbb{Q} \otimes \Gamma_{L}$. Let $\gamma \in \Gamma_{ACVF}(L^{alg})$ and $\phi(x,l)$ be some formula witnessing that $\gamma$ is in the definable closure of $L^{alg}$, so $l \in L^{alg}$. As $\Gamma$ is purely stably embedded there is some $\mathcal{L}_{OAG}$-formula $\psi(x,a)$ such that 
\begin{equation*}
\mathfrak{U}\vDash \forall x \big((\phi(x,l) \leftrightarrow \psi(x,a) \big).
\end{equation*}
Because $L^{alg}$ is a model 
\begin{equation*}
L^{alg}\vDash \exists a \in \Gamma^{n} \forall x \big((\phi(x,l) \leftrightarrow \psi(x,a) \big).
\end{equation*}
Thus, we can find some element $a \in \Gamma_{L^{alg}}^{n}$ such that $L^{alg} \vDash \psi(\gamma, a)$ and $|\psi(L^{alg},a)|=1$. By quantifier elimination of $ODAG$, $\psi(x,a)$ must be equivalent to a formula $x=\tau(a)$, where $\tau$ is a term in the language $\mathcal{L}_{OAG}$. Thus $\gamma \in \Gamma_{L^{alg}}=\mathbb{Q}\otimes \Gamma_{L}$.\\
We continue arguing for the residue field. It is clear that $k_{ACVF}(L) \subseteq \acl_{ACVF}(L) \cap \mathbf{k}$. Thus it is sufficient to argue that $\acl_{ACVF}(L) \cap \mathbf{k}= k_{L}^{alg}$. Let $b \in \acl_{ACVF}(L) \subseteq \acl_{ACVF}(L^{alg})$ and $\phi(x,l)$ the formula witnessing that $b$ is algebraic over $L$. Because $L^{alg}$ is a model and $\mathbf{k}$ is purely stably embedded, there is some formula $\psi(x,z)$ in the language of fields such that:
\begin{equation*}
  L^{alg} \vDash \exists d \in k^{r} \big(\forall x \big( \phi(x,l) \leftrightarrow \psi(x,d)\big) \big).
\end{equation*}
Consequently, we can find some tuple $d \in k_{L^{alg}}$ such that $L^{alg} \vDash \psi(b,d)$ and $\psi(L^{alg},d)$ is finite. By quantifier elimination in $ACF$, $b \in k_{L^{alg}}=k_{L}^{alg}$. 
\end{proof}
\begin{notation}
Let $k_{C}$ be a subfield of the residue field and $\mathbf{a}=\langle a_{i} \ | \ i \leq n \rangle$ be a tuple of elements in the residue field, we denote as $k_{C}\langle a_{i} \ | \ i \leq n \rangle$ the field generated by $k_{C}$ and the tuple $\mathbf{a}$, i.e. $k_{C}\langle a_{i} \ | \ i \leq n \rangle$. 
\end{notation}
In the case of $ACVF$ the following statement is an immediate consequence of the Zariski-Abhyankar inequality.
\begin{corollary}\label{cuadra} Let $C \subseteq L$ be a valued field extension. Let $\mathcal{O}_{C}$  and $\mathcal{O}_{L}$ be the valuation rings of $C$ and $L$ respectively. Let $\langle b_{i} \ | \ i \leq r \rangle$ be a sequence of elements of $\mathcal{O}_{L}^{\times}$ such that $\res(b_{i})$ in $\mathbf{k}_{L}$ is algebraically independent over $\mathbf{k}_{C}$.  And let $\langle a_{j} \ | \ j \leq s \rangle$ be a sequence of elements of $L^{\times}$ such that $\langle v(a_{j}) \ | \ j \leq s \rangle$ in $\mathbb{Q} \otimes \Gamma_{L}$ is $\mathbb{Q}$-linearly independent over $\mathbb{Q} \otimes \Gamma_{C}$. Let $E$ be the field generated by $C$ and $\langle b_{i} \ | \ i \leq r \rangle$ and $\langle a_{j} \ | \ j \leq s \rangle$, then:
\begin{itemize}
    \item $\displaystyle{\Gamma_{ACVF}(E) \subseteq(\mathbb{Q} \otimes \Gamma_{C}) \oplus \bigoplus_{j \leq s} (\mathbb{Q} \otimes v(a_{j}))}$, and
    \item $k_{ACVF}(E) \subseteq \acl_{ACVF}(E) \cap \mathbf{k} \subseteq (k_{C}\langle \res(b_{i}) \ | \ i  \leq r \rangle)^{alg}$.
\end{itemize}
In particular, if for each $ j \leq s$ we let $d_{j}=\rv(a_{j})$ then:
\begin{itemize}
\item $\displaystyle{\Gamma_{ACVF}(Cd_{1},\dots,d_{s},\res(b_{1}),\dots,\res(b_{r})) \subseteq (\mathbb{Q} \otimes \Gamma_{C}) \oplus \bigoplus_{j \leq s} (\mathbb{Q} \otimes v(a_{j}))}$ and, 
\item $k_{ACVF}(Cd_{1},\dots,d_{s},\res(b_{1}), \dots,\res(b_{r})) \subseteq \acl_{ACVF}(Cd_{1},\dots,d_{s},\res(b_{1}), \dots,\res(b_{r})) \cap \mathbf{k} \subseteq (k_{C}\langle \res(b_{i}) \ | \ i  \leq r\rangle)^{alg}$.
\end{itemize}
\end{corollary}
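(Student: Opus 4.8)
The plan is to derive the corollary directly from the two halves of Fact~\ref{inequality}. First I would pin down the transcendence degree of $E$ over $C$. Relabelling the index sets so that $I=\{1,\dots,r\}$ and $J=\{r+1,\dots,r+s\}$ are disjoint and putting $d_i=b_i$ for $i\in I$, $d_{r+j}=a_j$ for $1\le j\le s$, part~(1) of Fact~\ref{inequality} applies to the family $(\res(b_i))_{i\le r}$, algebraically independent over $\mathbf{k}_C$ in $\mathbf{k}_L$, and the family $(v(a_j))_{j\le s}$, which is $\mathbb{Q}$-linearly independent over $\mathbb{Q}\otimes\Gamma_C$ in $\mathbb{Q}\otimes\Gamma_L$; it yields that $b_1,\dots,b_r,a_1,\dots,a_s$ are algebraically independent over $C$. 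Since these elements generate $E$ over $C$, we get $\trdeg(E/C)=r+s$.

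Next I would run the Zariski--Abhyankar inequality, part~(2) of Fact~\ref{inequality}, on the valued field extension $C\subseteq E$ (with $E$ carrying the restriction of the valuation of $L$), which has finite transcendence degree $r+s$:
\[
\trdeg(\mathbf{k}_E/\mathbf{k}_C)+\dim_{\mathbb{Q}}\big(\mathbb{Q}\otimes\Gamma_E\,/\,\mathbb{Q}\otimes\Gamma_C\big)\ \le\ r+s .
\]
But $\res(b_1),\dots,\res(b_r)\in\mathbf{k}_E$ are algebraically independent over $\mathbf{k}_C$, so the first summand is $\ge r$, and $v(a_1),\dots,v(a_s)\in\Gamma_E$ are $\mathbb{Q}$-linearly independent over $\mathbb{Q}\otimes\Gamma_C$, so the second summand is $\ge s$. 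Hence both inequalities are equalities: $\trdeg(\mathbf{k}_E/\mathbf{k}_C)=r$ and $\dim_{\mathbb{Q}}(\mathbb{Q}\otimes\Gamma_E/\mathbb{Q}\otimes\Gamma_C)=s$. From the first equality, if some $x\in\mathbf{k}_E$ were transcendental over $\mathbf{k}_C\langle\res(b_i)\mid i\le r\rangle$, then $\res(b_1),\dots,\res(b_r),x$ would be $r+1$ elements of $\mathbf{k}_E$ algebraically independent over $\mathbf{k}_C$, which is impossible; so $\mathbf{k}_E$ is algebraic over $\mathbf{k}_C\langle\res(b_i)\mid i\le r\rangle$, i.e.\ $\overline{\mathbf{k}_E}=\overline{\mathbf{k}_C\langle\res(b_i)\mid i\le r\rangle}$. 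Symmetrically, the second equality forces $\mathbb{Q}\otimes\Gamma_E=(\mathbb{Q}\otimes\Gamma_C)\oplus\bigoplus_{j\le s}(\mathbb{Q}\otimes v(a_j))$.

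Finally I would translate these field-theoretic statements into the model-theoretic ones. Using the standard description of definable closure in $ACVF$ — namely $k_{ACVF}(E)\subseteq\acl_{ACVF}(E)\cap\mathbf{k}=\overline{\mathbf{k}_E}$ and $\Gamma_{ACVF}(E)\subseteq\acl_{ACVF}(E)\cap\Gamma=\mathbb{Q}\otimes\Gamma_E$ (see \cite{HHM}, \cite{HHM2}), together with the fact that $\mathbf{k}$ is a pure algebraically closed field as an induced sort, so that $\acl_{ACVF}$ restricted to $\mathbf{k}$ is field-theoretic algebraic closure — the two displayed equalities give $k_{ACVF}(E)\subseteq\acl_{ACVF}(k_C\langle\res(b_i)\mid i\le r\rangle)$ and $\Gamma_{ACVF}(E)\subseteq(\mathbb{Q}\otimes\Gamma_C)\oplus\bigoplus_{j\le s}(\mathbb{Q}\otimes v(a_j))$. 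The ``in particular'' clause is then immediate by monotonicity of $\dcl_{ACVF}$: for $j\le s$ the elements $d_j=\rv(a_j)$ and all the $\res(b_i)$ lie in $\dcl_{ACVF}(E)$, hence $\dcl_{ACVF}(C,d_1,\dots,d_s,\res(b_1),\dots,\res(b_r))\subseteq\dcl_{ACVF}(E)$, and intersecting with $\Gamma$ and with $\mathbf{k}$ gives the stated bounds. I expect the only delicate point to be the bookkeeping in this last step: one must check that the model-theoretic residue field and value group of $E$, computed inside the $ACVF$-monster, are controlled by the naive residue field $\mathbf{k}_E$ and value group $\Gamma_E$ of the field extension. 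Everything else is a direct application of Fact~\ref{inequality}.
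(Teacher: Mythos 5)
Your proof is correct and follows essentially the same route as the paper: both arguments are direct applications of Fact \ref{inequality}, using the counting $\trdeg(\mathbf{k}_E/\mathbf{k}_C)\geq r$ and $\dim_{\mathbb{Q}}(\mathbb{Q}\otimes\Gamma_E/\mathbb{Q}\otimes\Gamma_C)\geq s$ to force equalities, and then passing to $\dcl_{ACVF}$ via monotonicity for the ``in particular'' clause. The only cosmetic difference is that the paper applies the Zariski--Abhyankar inequality twice (first to $E_{0}=C(b_{1},\dots,b_{r})$, concluding $\Gamma_{ACVF}(E_{0})\subseteq\mathbb{Q}\otimes\Gamma_{C}$, then to $E/E_{0}$), whereas you use part (1) to compute $\trdeg(E/C)=r+s$ and apply part (2) once to the whole extension; both bookkeepings yield the same conclusion.
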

\begin{proof}

Let $C \subseteq L$ and $\langle a_{j} \ | \ j \leq s\rangle$, $\langle b_{i} \ | \ i \leq r \rangle$ be tuples as in the statement. Let $E_{0}$ be the field generated by $C$ and the tuple $\langle b_{i} \ | \ i \leq r\rangle$. By Fact \ref{inequality} and Fact \ref{dcl}:
\begin{align*}
r=\trdeg(E_{0}/ C) \geq \trdeg(k_{ACVF}(E_{0})/ k_{C})+ \dim_{\mathbb{Q}}(\Gamma_{ACVF}(E_{0})/\Gamma_{ACVF}(C)), 
\end{align*}
because $\trdeg(k_{ACVF}(E_{0})/ k_{C}) \geq r$, then  $\trdeg(k_{ACVF}(E_{0})/ k_{C})=r$ and $\dim_{\mathbb{Q}}(\Gamma_{ACVF}(E_{0})/\Gamma_{ACVF}(C))=0$. In particular, $\Gamma_{ACVF}(E_{0}) \subseteq \Gamma_{ACVF}(C)=\mathbb{Q}\otimes \Gamma_{C}$, and 
\begin{equation*}
k_{ACVF}(E_{0}) \subseteq \acl_{ACVF}(E_{0})\cap \mathbf{k}=k_{E_{0}}^{alg}= (k_{C}\langle \res(b_{i}) \ | \ i \leq r\rangle)^{alg}.
\end{equation*}
Let $E$ be the field generated by $E_{0}$ and $\langle a_{i} \ | \ i \leq s\rangle$. Again by Fact \ref{inequality}, 
\begin{align*}
s=\trdeg(E/E_{0}) \geq \trdeg(k_{ACVF}(E)/ k_{ACVF}(E_{0}))+\dim_{\mathbb{Q}}(\Gamma_{ACVF}(E)/ \Gamma_{ACVF}(E_{0})).
\end{align*}
Because $\langle v(a_{i}) \ | \ i \leq s \rangle \subseteq \Gamma_{ACVF}(E)$ and $\Gamma_{ACVF}(E_{0})\subseteq \mathbb{Q} \otimes \Gamma_{C}$, then $s=\dim_{\mathbb{Q}}(\Gamma_{ACVF}(E)/ \Gamma_{ACVF}(E_{0}))$. Thus, $\trdeg(k_{ACVF}(E)/ k_{ACVF}(E_{0}))=0$. Summarizing all the above, we conclude that:
\begin{itemize}
\item $k_{ACVF} (E) \subseteq \acl_{ACVF}(E) \cap \mathbf{k}=k_{E}^{alg}=(k_{C}\langle \res(b_{i}) \ | \ i \leq r \rangle)^{alg}$, and
\item  $\displaystyle{\Gamma_{ACVF}(E) \subseteq(\mathbb{Q} \otimes \Gamma_{C}) \oplus \bigoplus_{j \leq s} (\mathbb{Q} \otimes v(a_{j}))}$, as required. 
\end{itemize}
The second part of the statement follows immediately by the fact that $\res(b_{j}) \in \dcl_{ACVF}(b_{j})$ and $d_{i}=\rv(a_{i}) \in \dcl_{ACVF}(a_{i})$. 
\end{proof}

\begin{lemma} \label{indepequiv} Let $L,M$ be substructures of $\mathfrak{U}$, the monster model of $ACVF$.  Let $C$ be a common substructure of $L$ and $M$ and suppose that $\Gamma_{L} \subseteq \Gamma_{M}$. Let
\begin{itemize}
\item $A=\{ a_{i} \ | \ i \in R\} \subseteq L$ be such that $\{ v(a_{i}) \ | \ i \in R \}$ is a maximally $\mathbb{Q}$-linearly independent set of $\Gamma_{L} \subseteq \Gamma(L)= \mathbb{Q} \otimes \Gamma(L)$ over $\Gamma_{C}$. 
\item  $E=\{ e_{i} \ | \ i \in R \} \subseteq M$ satisfying $v(e_{i})= v(a_{i})$, 
\item $B=\{ b_{j} \ | \ j \in S \} \subseteq O_{L}^{\times}$ such that $\{ \res(b_{j}) \ | \ j \in S\}$ is a transcendence base of $k_{L} \subseteq \acl(L) \cap \mathbf{k}=k_{L}^{alg}$ over $k_{C}$.
\end{itemize}
The following statements are equivalent:
\begin{enumerate}
    \item The set
    \begin{align*}
    \{\res \big( \frac{a_{i}}{e_{i}}\big), \res(b_{j}) \ | \ j\in S, i \in R\} \ \text{is algebraically independent over $k_{M}$.}
    \end{align*}
    \item The structures $\acl_{ACVF}(\kInt_{\Gamma_{L}}^{L})$, $\acl_{ACVF}(\kInt_{\Gamma_{L}}^{M})$ are independent over $C\Gamma_{L}$. 
\end{enumerate}
\end{lemma}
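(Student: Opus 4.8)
The plan is to pin down explicit generators for the two algebraically closed structures over $C\Gamma_{L}$ inside $\mathfrak{U}$, and then to translate the asserted independence into a transcendence‑degree statement in the residue field sort which is visibly equivalent to (1).

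First I would identify the generators. By definition $\kInt_{\Gamma_{L}}^{L}$ consists of $k_{L}$ together with the torsors $RV(L)\cap v_{RV}^{-1}(\gamma)$, $\gamma\in\Gamma_{L}$. Since $\{v(a_{i})\mid i\in R\}$ is a maximal $\mathbb{Q}$‑linearly independent subset of $\Gamma_{L}$ over $\Gamma_{C}$ and $\{\res(b_{j})\mid j\in S\}$ is a transcendence basis of $k_{L}$ over $k_{C}$, Corollary \ref{cuadra} gives $\Gamma_{ACVF}(L)\subseteq(\mathbb{Q}\otimes\Gamma_{C})\oplus\bigoplus_{i}\mathbb{Q}\,v(a_{i})$ and $k_{L}\subseteq\acl_{ACVF}(k_{C}\{\res(b_{j})\})$; writing, for $x\in L^{\times}$ with $v(x)=\gamma\in\Gamma_{L}$, a relation $nv(x)=v(c)+\sum_{i}m_{i}v(a_{i})$ with $c\in C$, and then $\rv(x)^{n}=\rv\!\big(c\prod_{i}a_{i}^{m_{i}}\big)\cdot\res\!\big(x^{n}/(c\prod_{i}a_{i}^{m_{i}})\big)$, one obtains
\[
\acl_{ACVF}(\kInt_{\Gamma_{L}}^{L})=\acl_{ACVF}\big(C\Gamma_{L}\cup\{\res(b_{j})\}_{j}\cup\{\rv(a_{i})\}_{i}\big).
\]
The identical argument, using $k_{M}$ directly in place of a transcendence basis and $e_{i}$ (with $v(e_{i})=v(a_{i})$) in place of $a_{i}$, yields
\[
\acl_{ACVF}(\kInt_{\Gamma_{L}}^{M})=\acl_{ACVF}\big(C\Gamma_{L}\cup k_{M}\cup\{\rv(e_{i})\}_{i}\big).
\]

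Next I would reduce to the residue field. Both structures lie inside the part of $\mathfrak{U}^{eq}$ internal to the residue field, which by \cite[Lemma 2.6.2]{HHM2} is stable, stably embedded and internal to the strongly minimal residue field, so forking independence there is computed by transcendence degrees of residue‑field coordinates taken after choosing field lifts. The one relation binding the two generating sets is that for each $i$ the elements $\rv(a_{i})$ and $\rv(e_{i})$ lie in the common torsor $RV_{\gamma_{i}}$, $\gamma_{i}=v(a_{i})=v(e_{i})$, with $\rv(a_{i})=\rv(e_{i})\cdot\res(a_{i}/e_{i})$. Using this, together with the facts that a single point of a ``generic'' torsor $RV_{\gamma_{i}}$ (with $\gamma_{i}\notin\mathbb{Q}\otimes\Gamma_{C}$) contributes one transcendental coordinate and no new element of the residue field over $C\Gamma_{L}$, and that the residue field sort is orthogonal to $\Gamma$ and stably embedded, the chain rule for forking turns $\acl_{ACVF}(\kInt_{\Gamma_{L}}^{L})\ \ind_{C\Gamma_{L}}\ \acl_{ACVF}(\kInt_{\Gamma_{L}}^{M})$ into the conjunction of: (a) $\{\res(a_{i}/e_{i})\}_{i}$ is algebraically independent over $k_{L}$; and (b) $k_{M}$ is algebraically independent over $k_{C}$ from $\{\res(b_{j})\}_{j}\cup\{\res(a_{i}/e_{i})\}_{i}$. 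A transcendence‑degree count then finishes it: one always has $\trdeg\big(k_{C}\{\res(b_{j})\}\{\res(a_{i}/e_{i})\}/k_{C}\big)\le|S|+|R|$, with equality exactly under (a), and (b) asserts that the same count holds over $k_{M}$; hence (a)$\wedge$(b) is equivalent to $\trdeg\big(k_{M}\{\res(b_{j})\}\{\res(a_{i}/e_{i})\}/k_{M}\big)=|S|+|R|$, which is precisely (1). Both directions of the lemma follow.

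The hard part will be the middle step: justifying rigorously that $ACVF$‑independence of the two $\kInt$‑structures over $C\Gamma_{L}$ unwinds exactly to (a)$\,+\,$(b). This requires (i) the generic‑torsor analysis of \cite{HHM2} to see that $RV_{\gamma_{i}}$ adds a single ``transcendental'' coordinate and nothing to $k$ over $C\Gamma_{L}$, (ii) stable embeddedness and $\Gamma$‑orthogonality of the residue field in order to replace the base $C\Gamma_{L}\{\rv(e_{i})\}_{i}$ by its residue‑field part $\acl_{ACVF}(k_{C})$ when checking (b), and (iii) the bookkeeping through the chain rule in both directions, which also invokes the Zariski–Abhyankar inequality (Fact \ref{inequality}) to control $\Gamma_{ACVF}$ and $k_{ACVF}$ of the intermediate fields generated along the way. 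Once these ingredients are in place, the equivalence with (1) is exactly the transcendence‑degree count above.
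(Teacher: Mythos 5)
Your proposal follows essentially the same route as the paper: identify $\{\rv(a_i)\}_{i}\cup\{\res(b_j)\}_{j}$ as generators of $\acl_{ACVF}(\kInt^{L}_{\Gamma_L})$ over $C\Gamma_L$, use that the $k$-internal part is stable, stably embedded and strongly minimal fiberwise so that the independence in (2) becomes a Morley-rank/transcendence-degree count, and convert $\rv(a_i)$ into $\res(a_i/e_i)$ over $M$ via $v(e_i)=v(a_i)$; the final counting argument is the same as the paper's closing paragraph. The one place where you defer to citation is exactly where the paper does its real work: the claim that $\{\rv(a_1),\dots,\rv(a_r),\res(b_1),\dots,\res(b_s)\}$ is algebraically independent over $C\Gamma_L$, which requires that each $\rv(a_{i_0})$ be generic in its fiber over $C\Gamma_L$ \emph{together with the remaining generators}, not merely over $C\Gamma_L$ as your ingredient (i) states. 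The paper establishes this by combining Corollary \ref{cuadra} (to control $k_{ACVF}$ and $\Gamma_{ACVF}$ of the intermediate fields generated by the $a_i$ and $b_j$) with an explicit partial $\mathcal{L}_{rv}$-isomorphism that moves $\rv(a_{i_0})$ to another point of the same fiber while fixing the other generators, and which is elementary by the relative quantifier elimination of Proposition \ref{RVRACF}. Your ingredients (i)--(iii) are the right ones and the strategy is sound, but supplying the relative genericity statement amounts to reproducing that back-and-forth, so you should regard it as a step to be proved rather than quoted.
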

\begin{proof}
Let $(a_{i})$, $(e_{i})$ and $\res(b_{j})$ satisfying the required hypothesis. Let $d_{i}$ be the code of the open ball $B_{v(a_{i})}(a_{i})=\{ x \in \mathfrak{U} \ | \ v(x-a_{i})>v(a_{i})\}$, note that this code is inter-definable with the class $\rv(a_{i}) \in \dcl_{ACVF}(a_{i})$. For notational convenience we will assume that $R$ and $S$ are finite and equal to $\{1, \dots, r\}$ and $\{ 1, \dots, s\}$ respectively, as the more general argument follows in a similar manner by applying the argument to any finite sequence.\\

\begin{claim}\label{vic}{The set $D=\{d_{1},\dots,d_{r},\res(b_{1}),\dots,\res(b_{s})\} \subseteq \acl_{ACVF}(\kInt^{L}_{\Gamma_{L}})$ is algebraically independent over $C\Gamma_{L}$. (In the model theoretic sense)}\end{claim}
\begin{proof}
 We proceed by contradiction and we argue by cases. Suppose the existence of some index $j_{0} \leq s$ such that $\res(b_{j_{0}}) \in \acl_{ACVF}(D_{0}C \Gamma_{L})$, where $D_{0}=D \backslash \{ \res(b_{j_{0}})\}$. Let $B_{0}= B \backslash \{b_{j_{0}}\}$ and  note that $\res(b_{j_{0}}) \in \acl_{ACVF}(D_{0} C \Gamma_{L}) \subseteq \acl_{ACVF}(C(A,B_{0}))$, where $C(A,B_{0})$ denotes the field generated over $C$ by $A$ and $B_{0}$. By Corollary \ref{cuadra},
 \begin{equation*}
  \res(b_{j_{0}}) \in \acl_{ACVF}(C(A, B_{0}))\cap \mathbf{k} \subseteq (k_{C}\langle \res(b_{j})\ | \  j \leq s, j \neq j_{0} \rangle)^{alg}.
  \end{equation*}
This contradicts the choice of the elements $(b_{i} \ | \ i \leq s)$. We now assume that for some index $i_{0} \leq s$ such that $d_{i_{0}} \in \acl_{ACVF}(CD_{0}\Gamma_{L})$ where $D_{0}=D \backslash \{ d_{i_{0}}\}$. Let $E_{0}=\acl(C(A_{0},B))$ where $A_{0}= A \backslash \{a_{i_{0}}\}$, and $C(A_{0},B)$ denotes the field generated by $A_{0}$ and $B$ over $C$. Let $G= RV_{E_{0}}$, by Corollary \ref{cuadra} and Fact \ref{dcl}:
\begin{itemize}
\item $k_{E_{0}}=\acl_{ACVF}(E_{0}) \cap \mathbf{k}=(k_{C}<\res(b_{j}) \ | \ j \leq s>)^{alg}$, and
\item $\displaystyle{\Gamma_{E_{0}}=\Gamma_{ACVF}(E_{0})=(\mathbb{Q} \otimes \Gamma_{C}) \oplus \bigoplus_{j \neq j_{0}}( \mathbb{Q} \otimes v(a_{j}))=v_{RV}(G)}$.
\end{itemize}
Moreover by construction $\displaystyle{\Gamma_{L} \subseteq (\mathbb{Q} \otimes \Gamma_{C}) \oplus \bigoplus_{j \leq r } (\mathbb{Q} \otimes v(a_{j}))}$. Let $\gamma=v(a_{i_{0}})$ and let $\phi(x,\gamma)$ be the $\mathcal{L}(CD_{0})$-formula witnessing that $d_{i_{0}} \in \acl_{ACVF}(CD_{0}\Gamma_{L})$.  Because the residue field is infinite, we can find an element $d \in RV_{\mathfrak{U}}\cap v^{-1}(\gamma)$, such that $\mathfrak{U} \vDash \neg \phi(d, \gamma)$. To simplify the notation, we write $\hat{\Gamma}$ to denote $\displaystyle{(\mathbb{Q} \otimes \Gamma_{C}) \oplus \bigoplus_{j \leq r } (\mathbb{Q} \otimes v(a_{j}))}$.\\
  By Corollary \ref{cuadra}, 
  \begin{align*}
  k_{ACVF}(CD_{0} \cup \{ d\})& \subseteq (k_{C}\langle \res(b_{j}) \ | \ j \leq s \rangle)^{alg}=k_{E_{0}} \ and\\
  k_{ACVF}(CD) &\subseteq (k_{C}\langle \res(b_{j}) \ | \ j \leq s\rangle)^{alg}=k_{E_{0}}.
  \end{align*}
Let $G_{1}= G \cdot d_{i_{0}}^{\mathbb{Z}}$ and $G_{2}= G \cdot d^{\mathbb{Z}}$ and consider the partial isomorphism:
\begin{align*}
    f:& G_{1} \rightarrow G_{2}\\
    & g d_{i_{0}}^{n} \rightarrow g d^{n}, \text{ where $n \in \mathbb{Z}$}.
\end{align*}
Let $f_{v}= id_{\hat{\Gamma}}$, and $f_{r}= id_{k_{E_{0}}}$, then the triple $(f_{r}, f, f_{v}): (k_{E_{0}},G_{1},\hat{\Gamma}) \rightarrow (k_{E_{0}}, G_{2}, \hat{\Gamma})$  is a partial isomorphism in the $\mathcal{L}_{rv}$ language [See Definition \ref{rvok}]. By Proposition \ref{RVRACF} the partial isomorphism $(f_{r},f, f_{v})$ must be an elementary map. In particular $\tp(d/ CD_{0}\Gamma_{L})=\tp(d_{i_{0}}/ CD_{0}\Gamma_{L})$, but this leads us to a contradiction because $\mathfrak{U} \vDash \neg \phi(d, \gamma) \wedge \phi(d_{i_{0}},\gamma)$. 
\end{proof}
We can now prove the equivalence between $(1)$ and $(2)$. By Claim \ref{vic} $D \subseteq \acl_{ACVF}(\kInt_{\Gamma_{L}}^{L})$ is algebraically independent (in the model theoretic sense) over $C\Gamma_{L}$. Each fiber $RV_{\mathfrak{U}} \cap v^{-1}(\gamma)$ is stably embedded and internal to the residue field (which eliminates imaginaries), so it must be a strongly minimal set. Therefore, algebraic independence in the model theoretic sense over $C\Gamma_{L}$ coincides with forking independence in the stable sense, in particular $MR(D/C\Gamma_{L})= s+r$. Thus,  $\acl_{ACVF}(\kInt^{L}_{\Gamma_{L}} )\ind_{C\Gamma_{L}} \acl_{ACVF}(\kInt^{M}_{\Gamma_{L}})$ if and only if $MR(D/ \acl_{ACVF}(\kInt_{\Gamma_{L}}^{M}))=r+s$. As $\acl_{ACVF}(\kInt_{\Gamma_{L}}^{\mathfrak{U}})$ is stably embedded, $MR(D/ \acl_{ACVF}(\kInt_{\Gamma_{L}}^{M}))=r+s$ if and only if $MR(D/ M)=r+s$. Because each element $d_{i}$ is interdefinable over $M$ with $\res\big(\frac{a_{i}}{e_{i}} \big)$, then $MR(D/ M)=r+s$ if and only if 
$MR \big(\{ \res\big( \frac{a_{1}}{e_{1}}\big), \dots, \res\big( \frac{a_{r}}{e_{r}}\big), \res(b_{1}), \dots, \res(b_{s})\} \big/M)=r+s$. As the residue field is purely stably embedded, $MR(\{ \res\big(\frac{a_{1}}{e_{1}} \big),\dots, \res\big(\frac{a_{r}}{e_{r}} \big),\res(b_{1}),\dots,\res(b_{s})\}/ M)=r+s$  if and only if $MR\big(\{ \res\big(\frac{a_{1}}{e_{1}} \big),\dots, \res\big(\frac{a_{r}}{e_{r}} \big),\res(b_{1}),\dots, \res(b_{s})\}/ k_{M} \big)=r+s$, thus $\{ \res\big(\frac{a_{1}}{e_{1}} \big),\dots, \res\big(\frac{a_{r}}{e_{r}} \big),\res(b_{1}),\dots, \res(b_{s})\}$ is algebraically independent over $k_{M}$. We conclude therefore the equivalence between $(1)$ and $(2)$ as required. 
\end{proof}

We emphasize that in the following statement we work for $T$ a complete extension of the $\mathcal{L}_{RV}$-theory of henselian valued fields of equicharacteristic zero, and we let $\mathfrak{C}$ be its monster model. The following theorem generalizes ideas present in \cite[Proposition 12.15]{HHM}, we include all details for sake of completeness.
\begin{notation}
Given a set of parameters $B \subseteq \mathbf{k}_{\mathfrak{C}}$ we will denote as $\cl(B)$ the field theoretic algebraic closure of $B$ inside of $\mathbf{k}_{\mathfrak{C}}$.\\
We recall that given a substructure $M$ of $\mathfrak{C}$ for each $n \in \mathbb{N}$ we denote as $(\mathcal{A}_{n})_{M}=\{ \res^{n}(m) \ | \ m \in M\}$ and $\mathcal{A}_{M}=((\mathcal{A}_{n})_{M} \ | \ n \in \mathbb{N})$. 
\end{notation}

\begin{theorem}\label{iso2} Let  $L$ and $M$ be substructures of  $\mathfrak{C}$ and let $C \subseteq L \cap M$ be a common substructure which is a maximal model of $T$. We suppose:
\begin{enumerate}
\item $\Gamma_{L} \subseteq \Gamma_{M}$ and $\Gamma(L)=\Gamma_{L}$,
\item $\kInt^{L}_{\Gamma_{L}} \ind_{C\Gamma_{L}}^{qfs} \kInt^{M}_{\Gamma_{L}}$,
\item $L$ has finite transcendence degree over $C$.
\end{enumerate}
Then $\tp(L/C\Gamma_{L} \mathcal{A}_{M} \kInt_{\Gamma_{L}}^{M}) \vdash \tp(L/M)$. 
\end{theorem}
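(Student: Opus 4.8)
The plan is the usual one for domination statements. Fix $L'\vDash\tp(L/C\Gamma_L\mathcal{A}_M\kInt^M_{\Gamma_L})$ and a partial elementary map $\sigma\colon L\to L'$ fixing $C\Gamma_L\mathcal{A}_M\kInt^M_{\Gamma_L}$; it suffices to extend $\sigma$ to an automorphism of $\mathfrak{C}$ that is the identity on $M$. By the relative quantifier elimination for the $\mathcal{L}_{RV}$-language (Proposition \ref{relativeRV}), for this it is enough to produce a valued-field isomorphism $\tau\colon C(L,M)\to C(L',M)$ with $\tau\upharpoonright_M=\id_M$ and $\tau\upharpoonright_L=\sigma$ whose restrictions to $\Gamma$, to $RV$ (hence to $\mathbf{k}$) and to the power-residue sorts $\mathcal{A}_n$ are partial elementary maps of $\mathfrak{C}$. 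Since $\tau$ will fix $\Gamma_{C(L,M)}=\Gamma_M$ and act trivially on the $M$-part, the content is elementarity on $RV$ and on the $\mathcal{A}_n$.

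The crucial device is that, because $\Gamma_L\subseteq\Gamma_M$, every ``value-generic'' generator of $L$ over $C$ can be traded for a ``residue-generic'' unit living inside $C(L,M)$. Concretely, using $\trdeg(L/C)<\infty$, choose $a_1,\dots,a_r\in L$ with $(v(a_i))_{i\le r}$ a $\mathbb{Q}$-basis of $\mathbb{Q}\otimes\Gamma_L$ over $\mathbb{Q}\otimes\Gamma_C$, and $b_1,\dots,b_s\in\mathcal{O}_L^{\times}$ with $(\res(b_j))_{j\le s}$ a transcendence basis of $k_L$ over $k_C$; pick $e_i\in M$ with $v(e_i)=v(a_i)$ and set $u_i:=a_i/e_i$, so $v(u_i)=0$. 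Read through Lemma \ref{indepequiv} inside the $ACVF$-monster $\mathfrak{U}\supseteq\mathfrak{C}$, the hypothesis $\kInt^L_{\Gamma_L}\ind_{C\Gamma_L}^{qfs}\kInt^M_{\Gamma_L}$ says precisely that $\{\res(u_i),\res(b_j):i\le r,\ j\le s\}$ is algebraically independent over $k_M$. Hence the subfield $C(u_1,\dots,u_r,b_1,\dots,b_s)$ is a residue-generic extension of $C$ (value group $\Gamma_C$, residue field purely transcendental over $k_C$) whose residue field is linearly disjoint from $k_M$ over $k_C$; since its value group meets $\Gamma_M$ in $\Gamma_C$ and $C$ is maximally complete, Proposition \ref{linearlydisjoint} (with Fact \ref{maximal}) applies and shows it is linearly disjoint from $M$ over $C$, that $M(u_1,\dots,u_r,b_1,\dots,b_s)$ has value group $\Gamma_M$ and residue field $k_M(\res(u_i),\res(b_j))$, and that it has the good separated basis property over $M$. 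The same holds on the primed side: $v(\sigma(a_i))=v(a_i)$ because $\sigma$ fixes $\Gamma_L$, and $\sigma$ fixes $k(M)\supseteq k_M$ and is elementary, so $\res(\sigma(a_i)/e_i)$ and $\res(\sigma(b_j))$ are again algebraically independent over $k_M$.

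With this in hand I would assemble $\tau$ in two stages. First, the residue-generic data provides a (unique) valued-field isomorphism $M(u_1,\dots,u_r,b_1,\dots,b_s)\to M(\sigma(a_1)/e_1,\dots,\sigma(a_r)/e_r,\sigma(b_1),\dots,\sigma(b_s))$ fixing $M$, identity on $\Gamma_M$, lifting the obvious residue-field isomorphism along the residue-generic extension; it sends $a_i=e_iu_i\mapsto\sigma(a_i)$ and $b_j\mapsto\sigma(b_j)$, so it agrees with $\sigma$ on $L_0:=C(a_1,\dots,a_r,b_1,\dots,b_s)$. Second, one extends this over the remaining extension $L/L_0$, which affects neither the rational rank of the value group nor the transcendence degree of the residue field: here $\sigma\upharpoonright_L$ is already a valued-field isomorphism onto $L'$, and one checks — using the linear disjointness established above together with maximal completeness of $C$ and $\trdeg(L/C)<\infty$ — that $L$ and $M$ are linearly disjoint over $C$, so the two maps amalgamate to the desired $\tau\colon C(L,M)\to C(L',M)$ with $\tau\upharpoonright_M=\id_M$, $\tau\upharpoonright_L=\sigma$. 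For elementarity: $\Gamma_{C(L,M)}=\Gamma_M$ is fixed; every element of $RV(C(L,M))$ is, modulo the extension $C(L,M)/M(u_1,\dots,u_r,b_1,\dots,b_s)$, a product of an element of $RV(M)$ with the $\rv$-image of an element of $k_M(\res(u_i),\res(b_j))$, and $\tau$ is the identity on the first and governed by $\sigma$ on the second, so the induced maps on $RV$, on $\mathbf{k}$ and on the $\mathcal{A}_n$ reproduce the type computed by $\sigma$ over $C\Gamma_L\mathcal{A}_M\kInt^M_{\Gamma_L}$ — this is exactly where one uses that $\mathcal{A}_M=((\res^n(M))_n)$ sits in the base, to pin down $\res^n=\rho_n\circ\rv$ on the part of the extension where $n$-th powers in the residue field intervene. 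By Proposition \ref{relativeRV}, $\tau$ is partial elementary, so it extends to an automorphism of $\mathfrak{C}$ fixing $M$, and $\tp(L/M)=\tp(L'/M)$.

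The main obstacle is precisely the hypothesis $\Gamma_L\subseteq\Gamma_M$: it blocks any direct use of the linear-disjointness machinery of Section \ref{dominationvalres}, where one needs $\Gamma_L\cap\Gamma_M=\Gamma_C$. The remedy is to transfer the value-group behaviour of $L$ into residue-field behaviour via the units $u_i=a_i/e_i$ with $e_i\in M$, which Lemma \ref{indepequiv} (equating the $qfs$-independence with algebraic independence of the $\res(u_i),\res(b_j)$ over $k_M$) makes harmless; the remaining delicate point is the bookkeeping needed to push the amalgamated isomorphism past the ``immediate/algebraic'' part of $L$ over $L_0$ and to keep it elementary on the power-residue sorts $\mathcal{A}_n$, which is why $\mathcal{A}_M$ must accompany $\kInt^M_{\Gamma_L}$ in the base.
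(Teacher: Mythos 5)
You correctly identify the obstacle (here $\Gamma_L\subseteq\Gamma_M$, so $\Gamma_L\cap\Gamma_M\neq\Gamma_C$ and Proposition \ref{linearlydisjoint} cannot be applied to $L$ and $M$ directly), and your first stage — trading the value-generic generators $a_i$ for the units $u_i=a_i/e_i$ and using Lemma \ref{indepequiv} to see that $\{\res(u_i),\res(b_j)\}$ is algebraically independent over $k_M$, so that $C(u_1,\dots,u_r,b_1,\dots,b_s)$ and $M$ fall under Proposition \ref{linearlydisjoint} — is sound and matches the spirit of the paper. The gap is in your second stage. You assert that over the remaining extension $L/L_0$ ``one checks \dots that $L$ and $M$ are linearly disjoint over $C$, so the two maps amalgamate.'' This is precisely the hard point, and no tool you cite delivers it: linear disjointness of the underlying fields is not enough to amalgamate \emph{valued-field} isomorphisms — you need the valuation on $C(L,M)$ to be determined by its restrictions to $L$ and $M$, i.e.\ the (good) separated basis property of $L$ over $M$. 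The only source of that property in this paper is Proposition \ref{linearlydisjoint}, whose hypothesis $\Gamma_L\cap\Gamma_M=\Gamma_C$ fails here, and Fact \ref{maximal} is unavailable for the second stage because the intermediate base $M(u_1,\dots,u_r,b_1,\dots,b_s)$ (or $L_0$) is not maximally complete. Since $L/L_0$ can be a nontrivial immediate extension, ``affects neither the rational rank nor the transcendence degree'' does not make it harmless: immediate extensions are exactly where the valuation on a compositum ceases to be controlled.

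The paper closes this gap by a device your proposal has no substitute for: it \emph{refines} the valuation. Composing the place of $v$ with places that send each $\res(a_i/e_i)$ to $0$ produces a finer valuation $\hat v$ on $C(L,M)$, agreeing with $v$ on $M$, in which the new values $\eta_i=\hat v(a_i/e_i)$ are positive infinitesimals; a computation with these infinitesimals shows $\Gamma_{(L,\hat v)}\cap\Gamma_{(M,\hat v)}=\Gamma_{(C,\hat v)}$ and that $k_{(L,\hat v)}$, $k_{(M,\hat v)}$ are linearly disjoint over $k_{(C,\hat v)}$. Since $(C,\hat v)$ is still maximal, Proposition \ref{linearlydisjoint} and Proposition \ref{isoextended2} now apply to \emph{all} of $L$ at once for $\hat v$, and Lemma \ref{inf}(3) transfers the separated bases (hence the isomorphism) back to the original valuation $v$. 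You should either reproduce this refinement argument or supply an independent proof that $L$ has the separated basis property over $M$ for $v$; as written, the amalgamation in your second stage is unjustified. (A smaller issue: for the final elementarity you invoke Proposition \ref{relativeRV} alone, but you must also verify elementarity of the induced map on the $\mathcal{A}_n$ and on the fibers of $RV$, which is where the paper's explicit representability claims over $\kInt^M_{\Gamma_L}$ and $\mathcal{A}_M$ come in.)
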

\begin{proof}
Let $L' \vDash tp(L/C\Gamma_{L} \mathcal{A}_{M} \kInt_{\Gamma_{L}}^{M})$ and let $\sigma$ be an automorphism of $\mathfrak{C}$ fixing $C\Gamma_{L} \mathcal{A}_{M} \kInt_{\Gamma_{L}}^{M}$ taking $L$ to $L'$. \\

 \textit{Step $1$: Without loss of generality we may assume that $\sigma$ fixes $\Gamma_{M}$.}
 \begin{proof}
Let $\beta \in \Gamma_{M}$ such that $\sigma(\beta)=\beta'$. Because $\Gamma$ is stably embedded it is sufficient to prove that $\beta$ and $\beta'$ realize the same type over $\Gamma(L'\kInt^{M}_{\Gamma_{L}})$. So, we can take an automorphism of the structure $\tau$ fixing $L'\kInt_{\Gamma_{L}}^{M}$ sending $\beta'$ to $\beta$ and we may replace $\sigma$ by $\tau \circ \sigma$. To show that $\tp(\beta/ \Gamma(L'\kInt^{M}_{\Gamma_{L}}))=\tp(\beta'/ \Gamma(L'\kInt^{M}_{\Gamma_{L}}))$ we will argue that $\Gamma(L'\kInt^{M}_{\Gamma_{L}})=\Gamma_{L}$. Let $f$ be a $L'$-definable function from $\kInt^{M}_{\Gamma_{L}}$ to $\Gamma$. We aim to prove that for each $x \in \kInt_{\Gamma_{L}}^{M}$ we have that $f(x) \in \Gamma(L')$. 
For each $\gamma \in \Gamma_{L}$, the function $f$ takes finitely many values on $RV_{M} \cap v_{RV}^{-1}(\gamma)\subseteq RV_{\mathfrak{C}} \cap v_{RV}^{-1}(\gamma)$, because the power residue sorts and the value group are orthogonal to each other. Hence, for each $ \gamma \in \Gamma_{L}$ the set $f\big( RV_{M} \cap v^{-1}(\gamma)\big)$ is finite, thus algebraic over $L^{'}$. Then $f\big( RV_{M} \cap v^{-1}(\gamma)\big) \subseteq \acl(L') \cap \Gamma=\Gamma(L')=\Gamma_{L'}=\Gamma_{L}$, as required. 
\end{proof}
As in Proposition $12.15$ in \cite{HHM} we start by perturbing the valuation on $C(L,M)$. \\
\par \textit{Step $2$ : There is some valuation $\hat{v}$ on $C(L,M)$  finer that $v$ satisfying the following properties}
\begin{itemize}
    \item $\Gamma_{(L,\hat{v})} \cap \Gamma_{(M, \hat{v})}= \Gamma_{(C,\hat{v})}$,
    \item $k_{(L,\hat{v})}$ and $k_{(M,\hat{v})}$ are linearly disjoint over $k_{(C,\hat{v})}$,
    \item for any element $x \in M$, we have that $v(x)=\hat{v}(x)$. 
\end{itemize}
\begin{proof}
We choose elements $\{ a_{i} \ | \ i \in r\}, \{ b_{j} \ | \ j \in s\} \subseteq L$ and $\{ e_{i} \ | \ i \in r\}  \subseteq M$ satisfying the hypotheses of Lemma \ref{indepequiv}. By hypothesis, $\kInt_{\Gamma_{L}}^{L} \ind_{C\Gamma_{L}}^{qfs} \kInt_{\Gamma_{L}}^{M}$ thus $\acl_{ACVF}(\kInt_{\Gamma_{L}}^{L}) \ind_{C\Gamma_{L}} \acl_{ACVF}(\kInt_{\Gamma_{L}}^{M})$, so we can apply Lemma  \ref{indepequiv} and we obtain that: 
\begin{align*}
    \{ \res\big(\frac{a_{1}}{e_{1}} \big), \dots, \res\big(\frac{a_{r}}{e_{r}} \big), \res(b_{1}),\dots,\res(b_{s}) \}
\end{align*}
are algebraically independent (in the field theoretic sense) over $k_{M}$. For each $i \leq r$ we define:
\begin{align*}
    R^{i}= \cl(k_{M},\res\big(\frac{a_{1}}{e_{1}} \big), \dots, \res\big(\frac{a_{i}}{e_{i}} \big), \res(b_{1}),\dots,\res(b_{s})).
\end{align*}
In particular:
\begin{align*}
    R^{(0)}&= \cl(k_{M}, \res(b_{1}),\dots, \res(b_{s}))= \cl(k_{M}, k_{L}),\\
     R^{(r)}&=\cl\big(k_{M},k_{L}, \res\big(\frac{a_{1}}{e_{1}} \big), \dots, \res\big(\frac{a_{r}}{e_{r}} \big) \big).
\end{align*}
Let $p_{i}: R^{i+1} \rightarrow R^{i} \cup \{ \infty\}$ be a place which fixes $R^{i}$ and $p_{i}\big(\res\big(\frac{a_{i}}{e_{i}} \big) \big)=0$, such map can be found by the algebraic independence of
   $ \{\res\big(\frac{a_{i}}{e_{i}} \big), \dots, \res\big(\frac{a_{r}}{e_{r}} \big), \res(b_{1}),\dots, \res(b_{s}) \}$ over $k_{M}$. \\
   
   Let $p_{v}: C(L,M) \rightarrow k_{C(L,M)} \cup \{\infty\}$ be  the place corresponding to the valuation ring over $C(L,M)$ given by Proposition \ref{placeval}, and fix a place $p^{*}:k_{C(L,M)}\rightarrow R^{r} \cup \{ \infty\}$ fixing $R^{r}$.\\
We take the place $\hat{p}:C(L,M) \rightarrow \cl(k_{L},k_{M}) \cup \{ \infty\} $ given by taking the composition 
\begin{align*}
    \hat{p}=p_{0} \circ p_{1} \circ \dots \circ p_{r-1} \circ p^{*} \circ p_{v}.
\end{align*}
Let $\hat{v}$ be the valuation over $C(L,M)$ induced by $\hat{p}$, which is a refinement of the original valuation $v$. Because each of the places is the identity map on $k_{M}$, we may identify the valued field $(M,v)$ with $(M,\hat{v})$ and the value groups $\Gamma_{(M,v)}$ and $\Gamma_{(M,\hat{v})}$. So we now have two valuations $v$ and $\hat{v}$ induced over $C(L,M)$, and the construction ensures that the following conditions are satisfied:
\begin{itemize}
\item if $x \in M$, then $v(x)=\hat{v}(x)$ and if $x,y \in L$ and $v(x) \leq v(y)$ then $\hat{v}(x) \leq \hat{v}(y)$,
\item and by Lemma \ref{inf} $(2)$ for any $x \in C(L,M)$ with $v(x)>0$ we have:
\begin{align*}
0 < \hat{v} \big( \frac{a_{1}}{e_{1}}\big)<< \dots << \hat{v} \big( \frac{a_{r}}{e_{r}}\big)<< \hat{v}(x). 
\end{align*}

\end{itemize}
 Likewise, we can identify $(L,v)$ and $(L,\hat{v})$ and their value groups, as all the places are the identity map as well over $k_{L}$. However, it is impossible to identify $(M,v)$ and $(M,\hat{v})$, and $(L,v)$ with $(L,\hat{v})
$ simultaneously. We should not identify $(L,v)$ and $(L,\hat{v})$. For each $i \leq r$, let $\eta_{i}= \hat{v}\big( \frac{a_{i}}{e_{i}}\big)$ and $\hat{v}(e_{i})=v(e_{i})=\epsilon_{i}$.\\
Let 
\begin{equation*}
\Delta=\{ \hat{v}(x), -\hat{v}(x) \ | \ x \in C(L,M), p_{v}(x) \notin \{ 0, \infty\} \ \text{and} \  \hat{p}(x)=0\} \cup \{ 0_{\Gamma_{(C(L,M), \hat{v})}} \}.
\end{equation*}
By Lemma \ref{inf} $(1)$ $\Delta$ is a convex subgroup of $\Gamma_{(C(L,M), \hat{v})}$, that contains the subgroup generated by $\{ \eta_{i} \ | \ i \leq r\}$. (Because for each $i \leq r$, $p_{v}\big(\frac{a_{i}}{e_{i}}) \notin \{ 0, \infty\}$ while $\hat{p}\big(\frac{a_{i}}{e_{i}} \big)=0$).\\ 
We continue to show that the refined valuation satisfies the independence conditions over the value group and its residue field. \\

\begin{claim}{ $\Gamma_{(M,\hat{v})} \cap \Gamma_{(L,\hat{v})}=\Gamma_{(C,\hat{v})}$.} \end{claim}
\begin{proof}
Take elements $m \in M$ and $l \in L$ such that $\hat{v}(m)=\hat{v}(l)$. 
By hypothesis $\{ v(a_{i}) \ | \ i \leq r \} \subseteq \Gamma_{(L,v)}$ and it is a $\mathbb{Q}$-linearly independent set over $\Gamma_{(C,v)}$, hence $v(l)$ must belong to the pure hull of the subgroup generated by $\{ v(a_{i}) \ | \ i \leq r \}$ and $\Gamma_{(C,v)}$, thus we can find we can find $ p_{i} \in \mathbb{Z}$,$\gamma \in \Gamma_{C}$ and $k \in \mathbb{N}^{\geq 1}$ such that:
\begin{align}\label{eq1}
kv(l)=  \sum_{i=1}^{r} p_{i} v(a_{i})+ \gamma= \sum_{i=1}^{r} p_{i}v(e_{i})+ \gamma=  \sum_{i=1}^{r} p_{i} \epsilon_{i}+ \gamma.
\end{align}
 Because $\Gamma_{(L,v)}$ and $\Gamma_{(L,\hat{v})}$ are isomorphic,
\begin{align}\label{eq2}
k\hat{v}(l)= \sum_{i=1}^{r} p_{i} \hat{v}(a_{i})+ \gamma= \sum_{i=1}^{r} p_{i} \eta_{i} + \sum_{i=1}^{r} p_{i} \epsilon_{i}+\gamma.
\end{align}
Because $\{ v(a_{i}) \ | \ i \leq r\}$ is a $\mathbb{Q}$-linearly independent set over $\Gamma_{(C,v)}=\Gamma_{(C,\hat{v})}$ and 
\begin{equation*}
\{ v(a_{i}) \ | \ i \leq r\}=\{ v(e_{i}) \ | \ i \leq r \} \subseteq \Gamma_{(M,v)}=\Gamma_{(M,\hat{v})},
\end{equation*}
then $\{ \epsilon_{1}, \dots, \epsilon_{r}\}$ is also a $\mathbb{Q}$-linearly independent set over $\Gamma_{(C,v)} \subset \Gamma_{(M,v)}=\Gamma_{(M,\hat{v})}$. Thus we can extend it to a maximal set of elements in $\Gamma_{M}$ which are  $\mathbb{Q}$-linearly independent set over $\Gamma_{C}$, say  $\{ \epsilon_{1}, \dots, \epsilon_{r}\} \cup \{ \mu_{\alpha} \ | \ \alpha \in \lambda\}$. Hence, we can find indices $\{ \mu_{\alpha_{1}},\dots, \mu_{\alpha_{t}}\}$ such that
\begin{align*}
s\hat{v}(m)= \sum_{i=1}^{r} r_{i} \epsilon_{i}+ \sum_{i=1}^{t}q_{i} \mu_{\alpha_{i}}+ \gamma', \ \text{where $r_{i}, q_{i} \in \mathbb{Z}$, $\gamma' \in \Gamma_{C}$ and $s \in \mathbb{N}^{\geq 1}$.}
\end{align*}
Since $\hat{v}(l)=\hat{v}(m)$, we must have that $s(k\hat{v}(l))=k(s\hat{v}(m))$ thus:
\begin{align}\label{equalzero}
\underbrace{\sum_{i=1}^{r}sp_{i} \eta_{i}}_{ =\delta \in \Delta} + \underbrace{ \sum_{i=1}^{r} (sp_{i}-kr_{i}) \epsilon_{i}- \sum_{i=1}^{t} kq_{i} \mu_{\alpha_{i}}+ (s\gamma-k\gamma')}_{= \beta \in \Gamma_{M}}=0, 
\end{align}
 Because the elements $\{ \eta_{1}, \dots, \eta_{r}\}$ are infinitesimal with respect to $\Gamma_{(M,v)}^{>0}$, the equation \ref{equalzero} is satisfied if and only if $\delta=\displaystyle{\sum_{i=1}^{r}sp_{i} \eta_{i}=0}$ and $\beta= \displaystyle \sum_{i=1}^{r} (sp_{i}-kr_{i}) \epsilon_{i}- \sum_{i=1}^{t} kq_{i} \mu_{\alpha_{i}}+ (s\gamma-k\gamma')=0$. As $0< \eta_{1}<< \eta_{2}<<\dots << \eta_{r}$, then  $\displaystyle{\sum_{i=1}^{r}sp_{i} \eta_{i}=0}$ if and only if $p_{i}=0$ for all $i \leq r$. Then,
\begin{align*}
- \sum_{i=1}^{r} kr_{i} \epsilon_{i}- \sum_{i=1}^{t} kq_{i} \mu_{\alpha_{i}}+ (s\gamma-k\gamma')=0
\end{align*}
but the $\mathbb{Q}$-linear independence of $\{ \epsilon_{1}, \dots, \epsilon_{r}\} \cup \{ \mu_{\alpha_{1}}, \dots, \mu_{\alpha_{t}}\}$ over $\Gamma_{C}$ implies that $r_{i}=0$ for all $i \leq r$ and $q_{i}=0$ for all $i \leq t$. Summarizing we have that $k\hat{v}(l)=\gamma$ and $s\hat{v}(m)=\gamma'$, thus $\gamma$ is $k$-divisible and $\gamma'$ must be $s$-divisible. Hence:
\begin{align*}
\hat{v}(l)=\frac{\gamma}{k}= \frac{\gamma'}{s}=\hat{v}(m).
\end{align*}
As $C$ is a model of $T$, $\Gamma_{C}$ must be definably closed, thus $\frac{\gamma}{k}= \frac{\gamma'}{s} \in \Gamma_{C}$, as required.
\end{proof}
\begin{claim}{$k_{(M,\hat{v})}$ and $k_{(L,\hat{v})}$ are linearly disjoint over $k_{(C,\hat{v})}$.} \end{claim}
\begin{proof}
By the second hypothesis, so $k_{L}$ and $k_{M}$ are algebraically independent over $k_{C}$. Because $C$ is a model, $k_{C}=\cl(k_{C})$ and since the residue field is of characteristic zero, it must be the case that $k_{C} \leq k_{L}$ is a regular extension. By Fact \ref{mago}, $k_{L}$ and $k_{M}$ are linearly disjoint over $k_{C}$. Each of the places $p^{j}$ and $p^{*}$ are the identity map over $k_{M}$ and $k_{L}$, so is their composition
\begin{align*}
p_{0} \circ p_{1} \circ \dots \circ p_{r-1} \circ p^{*}: k_{(C(L,M),v)} \rightarrow \cl(k_{M},k_{L}).
\end{align*}
Because $k_{(M,v)}$ and $k_{(L,v)}$ are linearly disjoint over $k_{(C,v)}$ then  $k_{(M,\hat{v})}$ and $k_{(L,\hat{v})}$ must be linearly disjoint over $k_{(C,\hat{v})}=k_{(C,v)}$.
\end{proof}
\end{proof}
\par \textit{Step $3$: We find the $\mathcal{L}_{val}$-isomorphism $\hat{\sigma}$ extending $\sigma$ which is the identity on $M$.}
\begin{proof}
By Step $2$ there is some valuation $\hat{v}$ over $C(L,M)$ finer than $v$ satisfying the following conditions:
\begin{itemize}
\item $\Gamma_{(M,\hat{v})} \cap \Gamma_{(L,\hat{v})}=\Gamma_{(C,\hat{v})}$,
\item $k_{(M,\hat{v})}$ and $k_{(L,\hat{v})}$ are linearly disjoint over $k_{(C,\hat{v})}$,
\item for any element $x \in M$, we have that $v(x)=\hat{v}(x)$.
\end{itemize}
 Because $C \subseteq M$, then the valuations $v$ and $\hat{v}$ coincide over $C$. In particular $(C, \hat{v})$ is maximal, so by Fact \ref{maximal} $(M,\hat{v})$ has the good separated basis property over $C$. By Proposition \ref{linearlydisjoint} $M$ has the separated basis property over $(L, \hat{v})$, $M$ and $L$ are linearly disjoint over $C$, the value group of $\Gamma_{(C(L,M), \hat{v})}$ is the group generated by $\Gamma_{(L, \hat{v})}$ and $\Gamma_{(M,\hat{v})}$ over $\Gamma_{(C,\hat{v})}$ and the residue field $k_{(C(L,M),\hat{v})}$ is the field generated by $k_{(L,\hat{v})}$ and $k_{(M, \hat{v})}$ over $k_{(C, \hat{v})}$.\\
 We consider the field $C(\sigma(L), \sigma(M))$ with the valuation $\hat{v}$ such that $\sigma: (C(L,M),\hat{v}) \rightarrow (C(\sigma(L),\sigma(M)),\hat{v})$ is an $\mathcal{L}_{\val}$-isomorphism, which fixes $k_{M} \subset Int_{\Gamma_{L}}^{M}\Gamma_{L}$ and $\Gamma_{M}$ by Step $1$. By Proposition \ref{isoextended2}, we can find an $\mathcal{L}_{\val}$-isomorphism $\tau:(C(L,M),\hat{v}) \rightarrow (C(\sigma(L),M), \hat{v})$ which is the identity on $M$ and $\tau\upharpoonright_{(L,\hat{v})}:(L, \hat{v}) \rightarrow (\sigma(L), \hat{v})$. \\
 
 We want to show that $\tau: C(L,M) \rightarrow C(\sigma(L),M)$ induces as well an $\mathcal{L}_{\val}$-isomorphism with the original valuation $v$. Let $x \in C[L,M]$, without loss of generality we may assume that $\displaystyle{x = \sum_{i=1}^{n} l_{i} m_{i}}$ where $(m_{1},\dots,m_{n})$ is a separated basis of $Vect_{L}(m_{1},\dots,m_{n})$ according to the valuation $\hat{v}$, thus 
 \begin{align*}
 \hat{v}(x)= \hat{v} \big( \sum_{i=1}^{n} l_{i}m_{i}\big)=\min\{ \hat{v}(l_{i}m_{i}) \ | \ i \leq n \} = \hat{v}(m_{j}l_{i}) \ \text{for some $j \leq n$}. 
 \end{align*}
  As $\tau:(C(L,M),\hat{v}) \rightarrow (C(\sigma(L),M), \hat{v})$ is an $\mathcal{L}_{\val}$-isomorphism, $(m_{1},\dots,m_{n})$ is a separated basis of $\Vect_{\sigma(L)}(m_{1},\dots,m_{n})$ and 
 \begin{align*}
 \hat{v}(\tau(x))= \hat{v} \big( \sum_{i=1}^{n}\sigma( l_{i})m_{i}\big)=\min\{ \hat{v}(\sigma(l_{i})m_{i}) \ | \ i \leq n \} = \hat{v}(\sigma(l_{j})m_{j})= \tau(\hat{v}(l_{j}m_{j})). 
 \end{align*}
  By Lemma \ref{inf}$(3)$ $(m_{1},\dots,m_{n})$ is also a separated basis of $\Vect_{L}(m_{1},\dots,m_{n})$ and $\Vect_{\sigma(L)}(m_{1},\dots,m_{n})$ with respect to the valuation $v$. Consequently, 
\begin{align*}
v(x)= v(m_{j}l_{j})= v(m_{j})+ v(l_{j})= v(m_{j})+ \sigma(v(l_{j}))=v(m_{j})+ v(\sigma(l_{j}))= v(m_{j}\sigma(l_{j}))=v(\tau(x)).
\end{align*}
As $x \in C[L,M]$ is an arbitrary element, we conclude that the value group of $C(L,M)$ and $C(\sigma(L),M)$ according to the valuation $v$ is $\Gamma_{M}$ and $\tau$ acts as the identity on $\Gamma_{M}$. 
\end{proof}
Hence $\tau$ is also a $\mathcal{L}_{\val}$-isomorphism between the valued field structures $(C(L,M),v)$ and $(C(\sigma(L),M), v)$ which acts as the identity on $M$ and coincides with $\sigma$ on $L$. \\
\textit{Step $4$: We extend the $\mathcal{L}_{\val}$-isomorphism to a $\mathcal{L}$-isomorphism.}
We want to extend the $\mathcal{L}_{\val}$-isomorphism to a $\mathcal{L}$-isomorphism, thus we first want to extend the isomorphism by adding maps $\tau_{n}: (\mathcal{A}_{n})_{C(L,M)} \rightarrow (\mathcal{A}_{n})_{C(\sigma(L),M)}$. \\
Let $a \in C(L,M)$ we say that $\rv(a)$ is \emph{representable with parameters over $\kInt_{\Gamma_{L}}^{M}$} if there are $l_{1},\dots, l_{s} \in L$  and $m_{1},\dots,m_{s} \in M$ such that $\displaystyle{\rv(a)= \big(\sum_{i \leq s} \rv(l_{i})\rv(m_{i})\big)}$ where for each $i \leq s$, $\rv(m_{i}) \in \kInt_{\Gamma_{L}}^{M}$.
\begin{claim}\label{chumbi}{For each $x \in C[L,M]$ there are $ \hat{m}\in M$ and $a \in C(L,M)$, such that $v(a)=0$, $\rv(x)=\rv(a)\rv(\hat{m})$ and $\rv(a)$ is representable with parameters over $\kInt_{\Gamma_{L}}^{M}$. Furthermore, $x= a \hat{m}$ and $\displaystyle{a=\sum_{i \leq n} l_{i}m_{i}}$ for some $l_{i} \in L$ and $m_{i}\in M$, thus $\displaystyle{\tau(x)= \tau(a) \tau(\hat{m})= \big( \sum_{i \leq n} \sigma(l_{i}) m_{i} \big) \hat{m}}$.}\end{claim}
\begin{proof}
Fix an element $x \in C[L,M]$, and suppose $\displaystyle{x= \sum_{i \leq n} l_{i}m_{i}}$. Because $M$ has the separated basis property over $L$ according to the valuation $v$, \begin{equation*}
v(x)=v(\sum_{i \leq n} l_{i}m_{i})=\min\{ v(l_{i}m_{i}) \ | \ i \leq n\}= v(l_{i_{0}}m_{i_{0}}).
\end{equation*}
As $\Gamma_{L} \subseteq \Gamma_{M}$ there is some $\hat{m} \in M$ such that $v(l_{i_{0}}m_{i_{0}})= v(\hat{m})$. Let $I=\{i \leq n \ | \ v(l_{i}m_{i})=v(\hat{m})\}$ and $\displaystyle{a= \frac{x}{\hat{m}}= \sum_{i\leq n} l_{i} \frac{m_{i}}{\hat{m}}}$.\\
For each $ i \in I$, $v(\frac{m_{i}}{\hat{m}})=-v(l_{i})=-\lambda_{i} \in \Gamma_{L}$, thus $\rv\big(\frac{m_{i}}{\hat{m}}\big) \in \kInt_{\Gamma_{L}}^{M}$. Then, $\displaystyle{\rv(a)= \sum_{i \in I} \rv(l_{i}) \rv\big(\frac{m_{i}}{\hat{m}}\big)}$ where each $\rv\big(\frac{m_{i}}{\hat{m}}\big) \in \kInt_{\Gamma_{L}}^{M}$. \\
Summarizing, we have that $x=a \hat{m}$, so $\rv(x)=\rv(a) \rv(\hat{m})$ where $\rv(a)$ is representable with parameters over $\kInt_{\Gamma_{L}}^{M}$. For the second part of the statement, we simply notice that $\displaystyle{\tau(a)= \frac{\tau(x)}{\tau(\hat{m})}= \sum_{i\leq n} \sigma(l_{i}) \frac{m_{i}}{\hat{m}}}$, as required. 
\end{proof}
Thus, given $x_{1}, x_{2}, y_{1}, y_{2} \in C[L,M]$ we can find elements $m_{1}, n_{1}, m_{2}, n_{2} \in M$, $a_{1}, b_{1} ,a_{2}, b_{2} \in \mathcal{O}_{C(L,M)}^{\times}$  such that $x_{1}=a_{1}m_{1}, \ x_{2}=a_{2}m_{2}, \ y_{1}=b_{1}n_{1}$ and $y_{2}=b_{2}n_{2}$.\\
We argue that for each $n \in \mathbb{N}$, if $v\big( \frac{x_{1}}{x_{2}}\big), v\big( \frac{x_{2}}{y_{2}}\big) \in n \Gamma$ and $\rho_{n} \big( \rv\big( \frac{x_{1}}{x_{2}}\big)\big)= \rho_{n} \big( \rv\big( \frac{y_{1}}{y_{2}}\big)\big)$ then 
\begin{equation*}
\rho_{n} \big( \rv\big( \frac{\tau(x_{1})}{\tau(x_{2})}\big)\big)= \rho_{n} \big( \rv\big( \frac{\tau(y_{1})}{\tau(y_{2})}\big)\big).
\end{equation*}
Note that: 
\begin{align*}
\rho_{n} \big(\rv\big( \frac{x_{1}}{x_{2}}\big)\big)&=\rho_{n}\big(\rv \big( \frac{y_{1}}{y_{2}}\big)\big) \  \text{if and only if}  \ \rho_{n} \big( \rv \big(\frac{a_{1}}{b_{1}} \frac{m_{1}}{n_{1}}\big) \big)= \rho_{n} \big( \rv \big(\frac{a_{2}}{b_{2}} \frac{m_{2}}{n_{2}}\big) \big),\\
 \text{if and only if} \ &\rho_{n} (\rv(a_{1})) \rho_{n}(\rv(b_{1} )^{-1} )\rho_{n} \big( \rv \big( \frac{m_{1}}{n_{1}}\big)\big)= \rho_{n}( \rv(a_{2})) \rho_{n}( \rv(b_{2})^{-1}) \rho_{n} \big( \rv \big( \frac{m_{2}}{n_{2}}\big)\big), 
 \end{align*}
 where $\rv(a_{1}), \rv(b_{1}), \rv(a_{2})$ and $\rv(b_{2})$ are representable with parameters in $\kInt_{\Gamma_{L}}^{M}$ and $\rho_{n} \big( \rv \big( \frac{m_{1}}{n_{1}}\big)\big), \rho_{n} \big( \rv \big( \frac{m_{2}}{n_{2}}\big)\big) \in \mathcal{A}_{M}$. In particular, the equality $\rho_{n} \big(\rv\big( \frac{x_{1}}{x_{2}}\big)\big)=\rho_{n}\big(\rv \big( \frac{y_{1}}{y_{2}}\big)\big) $ can be represented by a formula satisfied by $L$ using parameters in $C\kInt_{\Gamma_{L}}^{M}\mathcal{A}_{M}$. As $\sigma: L \rightarrow L^{'}$ is an elementary map fixing $C\kInt_{\Gamma_{L}}^{M}\mathcal{A}_{M}$, the same formula must hold for $\sigma(L)$, thus $\rho_{n} \big( \rv\big( \frac{\tau(x_{1})}{\tau(x_{2})}\big)\big)= \rho_{n} \big( \rv\big( \frac{\tau(y_{1})}{\tau(y_{2})}\big)\big)$. \\
 Hence, for each $n \in \mathbb{N}$, we can naturally define the map $\tau_{n}:(\mathcal{A}_{n})_{C(L,M)}\rightarrow (\mathcal{A}_{n})_{C(\sigma(L),M)}$, where for $x,y \in C[L,M]$ we define  $\tau_{n}\big(\rho_{n} \big( \rv \big( \frac{x}{y}\big)\big)\big)=\rho_{n} \big( \rv \big( \frac{\tau(x)}{\tau(y)}\big)\big)$. Take $\mathbf{t}= \tau \cup \{ \tau_{n} \ | \ n \in \mathbb{N}\}$, then $\mathbf{t}:C(L,M) \rightarrow C(\sigma(L,M))$ is a $\mathcal{L}_{RV}$-isomorphism which satisfies the following conditions:
 \begin{enumerate}
 \item $\mathbf{t}\upharpoonright_{\mathcal{A}_{C(L,M)}}:\mathcal{A}_{C(L,M)} \rightarrow \mathcal{A}_{C(\sigma(L,M))}$ is partial elementary map of $\mathcal{A}_{\mathfrak{C}}$. This follows by Claim \ref{chumbi} combined with the fact that $\sigma: L \rightarrow L^{'}$ is a partial elementary map fixing $C\kInt_{\Gamma_{L}}^{M} \mathcal{A}_{M}$. 
 \item $\mathbf{t}\upharpoonright_{\Gamma_{C(L,M)}}:\Gamma_{C(L,M)}\rightarrow \Gamma_{C(\sigma(L,M))}$ is partial elementary map of $\Gamma_{\mathfrak{C}}$. In fact, $\Gamma_{C(L,M)}= \Gamma_{M}=\Gamma_{C(\sigma(L),M)}$ and $\mathbf{t}$ acts as the identity on the value group. 
 \end{enumerate}
 By quantifier elimination relative to the power residue sorts and the value group, the partial isomorphism $\tau$ must be an elementary map. It coincides with $\sigma$ over $L$ and is the identity on $M$, so we conclude that $\tp(L/M)=\tp(L'/M)$, as desired. 
 \end{proof}
 
 We restate the result in terms of domination.
 \begin{corollary} Let  $L$ be an elementary substructure of $\mathfrak{C}$ and let $C \subseteq L$ be a maximal model of $T$. Then the type $\tp(L/C)$ is dominated over its value group by the sorts internal to the residue field, that is for any field extension $C\Gamma_{L} \subseteq M$  such that $kInt_{\Gamma_{L}}^{M} \ind_{\Gamma_{L}C}^{qfs}kInt_{\Gamma_{L}}^{L}$ we have $\tp(L/C\Gamma_{L}kInt_{\Gamma_{L}}^{M})\vdash \tp(L/C\Gamma_{L}M)$. 
 \end{corollary}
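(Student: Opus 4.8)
The plan is to derive the statement from Theorem~\ref{iso2}. First I would record that the structural hypotheses of that theorem come for free here. Since $L\preceq\mathfrak{C}$ and the value group is a stably embedded sort, $\Gamma_{L}=\Gamma\cap\dcl(L)$ is an elementary substructure of $\Gamma_{\mathfrak{C}}$, so $\dcl(\Gamma_{L})=\Gamma_{L}$; from $C\Gamma_{L}\subseteq M$ we get $\Gamma_{L}\subseteq\Gamma_{M}$; and $\kInt^{M}_{\Gamma_{L}}\ind^{qfs}_{\kInt^{C}_{\Gamma_{L}}}\kInt^{L}_{\Gamma_{L}}$ is the running hypothesis on $M$, which matches the base $C\Gamma_{L}$ of Theorem~\ref{iso2} once one identifies $\kInt^{C}_{\Gamma_{L}}$ with $C\Gamma_{L}$ up to interalgebraicity, using that $\Gamma_{L}\subseteq\dcl_{ACVF}(\kInt^{L}_{\Gamma_{L}})$ because each nonempty fibre $RV(L)\cap v_{RV}^{-1}(\gamma)$ recovers $\gamma$ via $v_{RV}$. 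As in Theorem~\ref{iso2} I read the parameter set $C\Gamma_{L}\kInt^{M}_{\Gamma_{L}}$ as also recording the residue sorts $\mathcal{A}_{M}$; this changes nothing, since $C\Gamma_{L}\subseteq M$.

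The single hypothesis of Theorem~\ref{iso2} not literally present is that $L$ have finite transcendence degree over $C$, so the second step is to remove it. Because the consequence relation $\vdash$ has finite character, it suffices to prove $\tp(a/C\Gamma_{L}\mathcal{A}_{M}\kInt^{M}_{\Gamma_{L}})\vdash\tp(a/M)$ for every finite tuple $a\subseteq L$. Given such an $a$, choose a valued subfield $L_{0}$ with $a\subseteq L_{0}\subseteq L$ finitely generated over $C$; then $L_{0}$ has finite transcendence degree over $C$, $\dcl(\Gamma_{L_{0}})=\Gamma_{L_{0}}$, and $\Gamma_{L_{0}}\subseteq\Gamma_{L}\subseteq\Gamma_{M}$. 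It remains to see that $\kInt^{L_{0}}_{\Gamma_{L_{0}}}\ind^{qfs}_{C\Gamma_{L_{0}}}\kInt^{M}_{\Gamma_{L_{0}}}$. Monotonicity of forking in $ACVF$ gives $\acl_{ACVF}(\kInt^{L_{0}}_{\Gamma_{L_{0}}})\ind_{C\Gamma_{L}}\acl_{ACVF}(\kInt^{M}_{\Gamma_{L_{0}}})$ from the global hypothesis, since both independent sides have shrunk; and transitivity lets us lower the base from $C\Gamma_{L}$ to $C\Gamma_{L_{0}}$ once we know $\acl_{ACVF}(\kInt^{L_{0}}_{\Gamma_{L_{0}}})\ind_{C\Gamma_{L_{0}}}\Gamma_{L}$, which follows from the orthogonality of the residue field and the value group in $ACVF$ together with the fact that $\acl_{ACVF}$ of a residue-field-internal set meets $\Gamma$ only in the divisible hull of the given value-group parameters (Corollary~\ref{cuadra}). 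Theorem~\ref{iso2} now applies to the pair $(L_{0},M)$ and yields $\tp(L_{0}/C\Gamma_{L_{0}}\mathcal{A}_{M}\kInt^{M}_{\Gamma_{L_{0}}})\vdash\tp(L_{0}/M)$.

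To finish, let $\sigma\colon L\to L'$ be a partial elementary map over $C\Gamma_{L}\mathcal{A}_{M}\kInt^{M}_{\Gamma_{L}}$. Its restriction $\sigma\upharpoonright_{L_{0}}$ is then a partial elementary map over $C\Gamma_{L_{0}}\mathcal{A}_{M}\kInt^{M}_{\Gamma_{L_{0}}}$, because the latter set is contained in $C\Gamma_{L}\mathcal{A}_{M}\kInt^{M}_{\Gamma_{L}}$. By the previous paragraph $\sigma\upharpoonright_{L_{0}}$ is elementary over $M$, i.e.\ it preserves every $\mathcal{L}_{RV}$-formula with parameters in $M$ applied to tuples from $L_{0}$, and in particular to $a$. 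Hence $\tp(a/C\Gamma_{L}\mathcal{A}_{M}\kInt^{M}_{\Gamma_{L}})\vdash\tp(a/M)$; as $a$ was an arbitrary finite tuple from $L$, we conclude $\tp(L/C\Gamma_{L}\mathcal{A}_{M}\kInt^{M}_{\Gamma_{L}})\vdash\tp(L/C\Gamma_{L}M)$, which is exactly the assertion that $\tp(L/C)$ is dominated over its value group by the sorts internal to the residue field.

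I expect the delicate point to be the second step: one must ensure that the $\mathrm{qfs}$-independence of $\kInt$ really does descend to the finitely generated subfield $L_{0}$ over the smaller base $C\Gamma_{L_{0}}$, and that the restricted isomorphism still fixes all of the residue data needed to invoke Theorem~\ref{iso2}. (Alternatively one may avoid the reduction altogether by checking that the construction of the refined valuation in the proof of Theorem~\ref{iso2} goes through for a directed family of places when $L$ has infinite transcendence degree over $C$.) Everything else --- in particular the elementarity of the resulting $\mathcal{L}_{RV}$-isomorphism on $C(L_{0},M)$ --- is exactly as in the proof of Theorem~\ref{iso2}, via the relative quantifier elimination of Theorem~\ref{QE}.
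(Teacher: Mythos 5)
Your core move is exactly the paper's: the author's entire proof of this corollary is the single sentence ``this is precisely the conclusion of Theorem \ref{iso2}.'' So in approach you agree completely. What you add --- and the paper silently omits --- is a discharge of the finite-transcendence-degree hypothesis of Theorem \ref{iso2}, which an elementary substructure $L$ will of course not satisfy. Your finite-character reduction is the right idea, and it mirrors the paper's own remark in Lemma \ref{indepequiv} that the infinite case ``follows by applying the argument to any finite sequence''; your parenthetical alternative (running the chain of places in the proof of Theorem \ref{iso2} over a directed family) is equally legitimate.

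There is, however, one unjustified step inside your reduction. You assert that a finitely generated subfield $L_{0}=C(a)$ satisfies $\dcl(\Gamma_{L_{0}})=\Gamma_{L_{0}}$. By Fact \ref{dcllinear} this amounts to $\Gamma_{L_{0}}$ being relatively divisibly closed in $\Gamma_{\mathfrak{C}}$, which fails in general for a finitely generated extension (e.g.\ when $v(a)=\gamma$ with $\gamma\notin\mathbb{Q}\otimes\Gamma_{C}$, the group $v(C(a)^{\times})$ is typically $\Gamma_{C}\oplus\mathbb{Z}\gamma$, while $\gamma/2$ may lie in $\Gamma_{\mathfrak{C}}$). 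Moreover, Step 1 of the proof of Theorem \ref{iso2} also uses $\dcl(L)=L$, which certainly fails for $C(a)$. The fix is to replace $C(a)$ by a subfield of $L$ of finite transcendence degree over $C$ that is suitably closed --- for instance the intersection of $L$ with a relative algebraic closure of $C(a)$ --- and to check the hypotheses for that field; this preserves finite transcendence degree since only algebraic elements are added. Relatedly, your descent of the $qfs$-independence from the base $C\Gamma_{L}$ to $C\Gamma_{L_{0}}$ is only sketched (monotonicity plus transitivity plus orthogonality of $\mathbf{k}$ and $\Gamma$ in $ACVF$); the claim is believable, since the $\kInt$ data is internal to a strongly minimal sort orthogonal to $\Gamma$ and Corollary \ref{cuadra} controls the value-group content of its algebraic closure, but it should be written out. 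Your observations about the mismatch between the base $\kInt_{\Gamma_{L}}^{C}$ in the corollary's statement and the base $C\Gamma_{L}$ in Theorem \ref{iso2}, and about the missing $\mathcal{A}_{M}$ in the parameter set, correctly flag imprecisions in the statement itself rather than defects of your argument.
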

 \begin{proof}
  Let $C\Gamma_{L} \subseteq M$ such that $kInt_{\Gamma_{L}}^{M} \ind_{C\Gamma_{L}}^{qfs}kInt_{\Gamma_{L}}^{L}$. We aim to prove that $\tp(L/C\Gamma_{L}kInt_{\Gamma_{L}}^{M})\vdash \tp(L/C\Gamma_{L}M)$, this is given an elementary map
 $\sigma: L \rightarrow L'$ fixing $C\Gamma_{L}kInt_{\Gamma_{L}}^{M}$ we can find an automorphism $\tau$  extending $\sigma$ which is the identity on $M$. This is precisely the conclusion of Theorem \ref{iso2}. 
 \end{proof}

\bibliographystyle{plain}

\end{document}